\newtheorem{theorem}{Theorem}[section]
\newtheorem{lemma}[theorem]{Lemma}
\newtheorem{corollary}[theorem]{Corollary}
\theoremstyle{definition}
\newtheorem{definition}[theorem]{Definition}
\newtheorem{remark}[theorem]{Remark}
\newcommand{\qeds}{\qed\vspace{.2cm}}
\newcommand{\xa}{x_{A}}
\newcommand{\xb}{x_{B}}
\newcommand{\ya}{y_{A}}
\newcommand{\yb}{y_{B}}
\newcommand{\ga}{g_{A}}
\newcommand{\gb}{g_{B}}
\newcommand{\ha}{h_{A}}
\newcommand{\hb}{h_{B}}
\newcommand{\la}{\langle}
\newcommand{\ra}{\rangle}
\newcommand{\R}{\mathbb{R}}
\newcommand{\K}{\mathcal{K}}
\newcommand{\thetak}{\vartheta^\K}
\newcommand{\cp}{\mathcal{CP}}
\newcommand{\cpsd}{{\mathcal{CS}_+}}
\newcommand{\dnn}{\mathcal{DNN}}
\newcommand{\psd}{{\mathcal{S}_+}}
\newcommand{\psdd}{{\mathcal{S}^d_+}}
\newcommand{\varthetam}{\vartheta^-}
\newcommand{\M}{M}
\newcommand{\A}{\ensuremath{\mathcal{A}}}
\newcommand{\B}{\ensuremath{\mathcal{B}}}
\newcommand{\C}{\ensuremath{\mathbb{C}}}
\newcommand{\hphi}{\hat{\phi}}
\newcommand{\comp}{\circ}
\newcommand{\schur}{\bullet}
\newcommand{\PA}{\hat{\A}}
\DeclareMathOperator{\tr}{Tr}
\DeclareMathOperator{\rel}{rel}
\DeclareMathOperator{\vect}{vec}
\DeclareMathOperator{\mat}{mat}
\DeclareMathOperator{\dar}{\downarrow}
\def\bi{\begin{itemize}}
\def\ei{\end{itemize}}
\newtheorem{result}{Result}
\def\be{\begin{equation}}
\def\ee{\end{equation}}
\title{Graph isomorphism: Physical resources, optimization models,  
and algebraic characterizations }
\author{Laura Man\v{c}inska\thanks{University of Copenhagen, mancinska@ku.dk} \and David E.~Roberson\thanks{Technical University of Denmark, dero@dtu.dk}  \and Antonios Varvitsiotis \thanks{Singapore University of Technology and Design, avarvits@gmail.com} }
\begin{document}
\maketitle

\begin{abstract}
In the  $(G,H)$-isomorphism game,  a verifier interacts with two non-communicating  players (called provers) by privately  sending each of them a random vertex from either $G$ or $H$, whose aim is  to convince the verifier that  two graphs $G$ and $H$ are isomorphic. 
%
%
%
In  recent work  along with Atserias, \v{S}\'{a}mal and Severini  [Journal of Combinatorial Theory, Series B, 136:89--328, 2019] we showed  that a verifier can be convinced that  two non-isomorphic graphs are   isomorphic, if the provers are allowed to  share quantum resources. 
 In this paper we  model    classical and quantum graph isomorphism  by  linear constraints  over certain complicated      convex cones, which we then relax  to a pair of tractable  convex models (semidefinite  programs). 
Our main result is a  complete algebraic characterization of the corresponding equivalence relations on graphs 
 in terms of appropriate  matrix algebras. 
Our techniques are an interesting mix of algebra, combinatorics, optimization,  and quantum~information.
\end{abstract}


\section{Introduction}
A pair of graphs $G$ and $H$ is isomorphic, denoted by $G\cong H$, if there exists a bijective map  from the vertex set of $G$ to the vertex set of $H$ that preserves adjacency and non-adjacency.  
 The problem of deciding whether two given graphs are isomorphic is of   fundamental
practical interest,  and at the same time, it plays    a central role in theoretical computer science as one of the few  problems in the class NP which is not known to be polynomial-time solvable or NP-complete.

Along with Atserias, \v{S}\'{a}mal, and Severini, the authors  recently introduced a non-local game that    captures the notion of graph isomorphism~\cite{qiso}. 
Specifically, in the $(G,H)$-graph isomorphism game there are two players, Alice and Bob, trying  to convince a third party, called the verifier,  that the graphs $G$ and $H$ are isomorphic. For this, the verifier randomly selects a pair of vertices $\xa, \xb \in V_G \cup V_H$ and sends to Alice and Bob respectively. After receiving their vertices, and without communicating, Alice and Bob  respond with vertices $\ya,\yb \in V_G \cup V_H$, where we  assume the vertex sets $V_G$ and $V_H$ are~disjoint.

The players win the game if the questions they were asked and the answers they provided 
indeed model an isomorphism from $G$ to $H$. Concretely, the first winning condition is that each player must respond with a vertex from the graph that the vertex they received was \emph{not} from, i.e.,
\begin{equation}\label{cond1}
\xa \in V_G \Leftrightarrow \ya \in V_H \text{ and } \xb \in V_G \Leftrightarrow \yb \in V_H.
\end{equation}
Furthermore,  setting  $\ga$ be the unique vertex of $G$ among $\xa$ and $\ya$, and  defining  $\gb, \ha$, and $\hb$ similarly,  the  second winning condition is that 
\begin{equation}\label{cond2}
\rel(\ga,\gb) = \rel(\ha,\hb),
\end{equation}
where $\rel(x,y)$ is the \emph{relationship} between vertices $x$ and $y$, i.e., whether they are equal, adjacent, or distinct non-adjacent.
Note that Equation \eqref{cond2} encodes   many constraints, e.g., if Alice and Bob are sent the same  vertices in $G$, then they must respond with the same  vertex of $H$ or if they are sent the endpoints of an edge of $G$ they need to respond with the endpoints of an edge of $H$. 
Furthermore, note that we do not explicitly require that $G$ and $H$ have the same number of vertices.

Alice and Bob  are allowed to agree  on a strategy before the start of the game, but are not allowed to communicate once the game has begun. This type of game is known as a \emph{nonlocal game}, since the players are usually thought of as being separated in space, which  prevents them from communicating after they receive their questions.  The parties only play one round of this game, and we only consider  strategies that win with certainty, i.e., with probability equal to one. We  refer to such  strategies as  \emph{perfect}.

It is easy to see that responding according to an isomorphism of $G$ and $H$ is a perfect strategy for the $(G,H)$-isomorphism game. Moreover, the converse also holds (see Section~\ref{subsec:classicalstrats}) and thus the isomorphism game characterizes the notion of isomorphism of graphs. Motivated by this, in the previous work \cite{qiso} we introduced the notions  quantum and non-signalling isomorphisms of graphs in terms of the existence of perfect quantum and non-signalling strategies for the graph isomorphism game. Furthermore,  we investigated these two relations, proving various necessary conditions for quantum isomorphism, giving a complete characterization of non-signalling isomorphism, and providing a method for constructing pairs of non-isomorphic graphs that are nevertheless quantum isomorphic.


In this work   we continue our study  of the graph isomorphism problem within the framework of nonlocal games. 
Our point of departure  is  a new equivalence relation on graphs, which is defined in terms of the feasibility of a certain linear conic program over an appropriate  convex cone. Specifically,  for any   convex cone of matrices $\K$ we say that graphs  $G$ and $H$  
are {\em $\K$-isomorphic}, and write $G \cong_\K H$, if there exists  a matrix $M$ with rows and columns indexed by $V_G \times V_H$ such that:
 \begin{align}
\sum_{h,h' \in V_H} M_{gh,g'h'} & = 1, \text{ for all } g,g' \in V_G \label{gsums}\\
\sum_{g,g' \in V_G} M_{gh,g'h'} & = 1, \text{ for all } h,h' \in V_H \label{hsums}\\
\phantom{\sum_{g,g' \in V_G}} M_{gh,g'h'} & = 0,   \text{ if } \rel(g,g') \ne \rel(h,h'), \label{zeros}\\
M & \in \K. \label{mink}
\end{align}
Any matrix satisfying \eqref{gsums}-\eqref{mink}
is called a \emph{$\K$-isomorphism matrix for $G$ to $H$}. 


Note that the entries in a $\K$-isomorphism matrix are not necessarily nonnegative, or even real, depending on the choice of cone $\K$. In this article  we study the graph equivalences  defined by  the notion of  $\K$-isomorphism for four  cones  of  matrices. 
The first one  is the cone of {\em positive semidefinite matrices (psd)}, denoted $\psd$, 
defined as the set of  Gram matrices of a set of vectors $v_1, \ldots, v_n$, i.e., $M_{ij} = v_i^Tv_j$. Second, we consider the \emph{doubly nonnegative cone}, denoted $\dnn$, which consists of entrywise-nonnegative psd  matrices. Third,  we consider  the  cone of  \emph{completely positive semidefinite} matrices \cite{laurent2015conic}, denoted $\cpsd$, which consists of  Gram matrices of psd matrices. Concretely, a  matrix  $M$ is completely positive semidefinite if there exist Hermitian  psd matrices  $\rho_1, \ldots, \rho_n$,  such that  $M_{ij} = \langle \rho_i, \rho_j \rangle := \tr(\rho_i^\dagger\rho_j)$. 
 Lastly, we consider  the \emph{completely positive cone}, denoted $\cp$, corresponding to  Gram matrix of entrywise nonnegative vectors. It is straightforward  to verify that
\begin{equation}
\cp \subseteq \cpsd \subseteq \dnn \subseteq \psd, \label{eq:cones}
\end{equation}
and   these containments are all strict for  matrices of size at least $5$~\cite{laurent2015conic}.

 All of these cones are of central importance to the field  mathematical optimization.  Most notably, linear optimation over the cone of psd matrices corresponds to semidefinite programming, an important family of optimization models with extensive modeling  power and efficient algorithms \cite{sdp}. Additionally, linear optimization over the completetely positive cone corresponds to completely positive  programming,
 a family of  optimization models that are  hard to solve but  have  significant expressive power \cite{burer}.


\paragraph{Summary of results and related work.}
In our first result we  expresses both classical and quantum graph isomorphism as  $\K$-isomorphism over appropriate cones of matrices. Specifically, we have that:

\begin{result}For any pair of graphs $G,H$  we have that $G$ and $H$ are isomorphic if and only  if $G\cong_\cp H$ and furthermore, $G$ and $H$ are quantum isomorphic if and only  if $G\cong_\cpsd H$. 
 \end{result}
The fact that quantum isomorphism is equivalent to the feasiblity of a linear conic program over the cpsd cone  is not surprising in view of  the  strong connections between  the cpsd cone and  the set of quantum correlations, e.g. see \cite{laurent2015conic, SV,cpsd2qc,conichomos}. On the other hand,  the formulation of graph isomorphism as a feasibility problem over the completely positive cone is to the best of our knowledge  new. A   related result is a  formulation for  GI  over the copositive cone \cite{gijben},  the dual of the completely positive~cone.

Furthermore, in~\cite{conichomos}, the notion of $\K$-homomorphism for various cones $\K$ was considered. These relations are related to homomorphisms in the same way that $\K$-isomorphisms are related to isomorphisms. In particular, $\cp$- and $\cpsd$-homomorphisms are equivalent to classical and quantum homomorphisms. 

As the problem of deciding  whether two graphs  are  classical (or  quantum)  isomorphic is  hard, it is important to identify   tractable necessary and/or sufficient conditions allowing to  checking this. 
In view of our first result,  we study the notion of  $\K$-isomorphism in the case of   the doubly nonnegative   and  positive semidefinite cones. Moreover, by the  chain of inclusions $\cp  \subseteq \cpsd \subseteq \dnn \subseteq \psd$, both $\dnn$- and $\psd$-isomorphism  are tractable relaxations of quantum  (and of classical) graph isomorphism. 
 The main contribution of this work  is  a complete algebraic characterization of  the  graphs that are  $\dnn$- and $\psd$-isomorphic respectively in terms of isomorphisms of appropriate matrix algebras.  

A linear  subspace of $\C^{n \times n}$  which is also closed under matrix multiplication is an \emph{algebra}.
A subalgebra  $\mathcal{A}$  of $\C^{n \times n}$ is called  \emph{coherent} if it is unital (i.e., contains the identity matrix), contains the  all-ones matrix, is closed under Schur product, and is  self-adjoint (i.e., closed under conjugate transpose). As the intersection of two coherent algebras is a coherent algebra we  can  define the \emph{coherent algebra of a graph $G$}, denoted by $\A_G$, as  the intersection of all coherent algebras containing the adjacency matrix of $G$. 


 \begin{result}
 Consider two graphs  $G, H$  with  adjacency matrices $A_G$ and $A_H$ and coherent algebras $\A_G$ and $\A_H$ respectively. Then, we have that    $G \cong_\dnn H$ if  and only if there exists an {isomorphism} between the  coherent algebras $\A_G$ and $\A_H$ that maps $A_G$ to $A_H$. 
\end{result}

As it turns out,  the notion of $\dnn$-isomorphism coincides with an  equivalence relation on graphs introduced  in 1968 by Weisfeiler and Leman~\cite{WL}, known today as the  2-dimensional \emph{Weisfeiler-Leman method}. In order to explain this link we first need to introduce some necessary background.

It is well-known that coherent algebras are in one-to-one correspondence  with coherent configurations. Indeed,  since  a coherent algebra $\A$ is closed under Schur product,  it must have an orthogonal (with respect to the Hilbert-Schmidt inner product) basis of 01-matrices, denoted by  $\{A_i : \ i \in \mathcal{I}\}$. Concretely, the 
   matrices  $A_i$  satisfy the following properties where $\schur$ denotes the \emph{Schur}, or entrywise, product: $(i) $   $A_i \schur A_j = \delta_{ij}A_i$, $(ii)$  $\sum_{i \in \mathcal{I}} A_i = J$, $(iii)$   $\sum_{i \in \Omega} A_i = I$ for some $\Omega \subseteq \mathcal{I}$, $(iv)$ 
 for each $i \in \mathcal{I}$, there exists a $j \in \mathcal{I}$ such that $A_i^\dagger = A_j$, and $(v) $  there exist numbers $p_{ij}^k$ for $i,j,k \in \mathcal{I}$, called the \emph{intersection numbers of $\A$},  such that $A_i A_j = \sum_k p_{ij}^k A_k$.


To  each  matrix  $A_i$  we  associate a subset of $V_G \times V_G$, namely the set of ordered pairs $(g,g')$ such that the $gg'$-entry of $A_i$ is 1. Equivalently, thinking of each such subset as a binary relation on $V_G$, i.e.,  $R_i=\{(g,g') \in V_G\times V_G: A_i(g,g')=1\}$,  properties $(i)$--$(v)$ imply  that that these relations form a  \emph{coherent configuration} \cite{hig}. Conversely, any coherent configuration corresponds to some coherent algebra.

Given a graph $G$, the 2-dimensional Weisfeiler-Lehman algorithm begins by labeling every ordered pair of vertices $(g,g')$ according to whether they are equal, adjacent, or non-adjacent.  At each step, for every ordered pair of vertices $(g_1,g_2)$, its label is augmented with  the $|V_G|$-element multiset of ordered pairs of labels of $(g_1,g),(g,g_2)$ for each $g \in V_G$. The algorithm terminates when the partition of $V_G \times V_G$ induced by the labels stabilizes. By specifying an ordering on the values of $\rel$, and ordering the labels lexicographically, the parts of the resulting partition inherit an isomorphism invariant ordering. If the partitions resulting from running this algorithm on two graphs are different (in a sense that can be made rigorous), then the graphs must be non-isomorphic. Otherwise, we say that the graphs are \emph{not distinguished by the Weisfeiler-Leman method}, which is an equivalence relation on graphs. As was shown in the original paper by Weisfeiler and Lehman, the resulting partition of $V_G \times V_G$ given by the above algorithm is exactly the coherent configuration corresponding to the coherent algebra of $G$ \cite{WL}. 



 In Section \ref{sec:characterization}  we characterize $\psd$-isomorphism by introducing an appropriate generalization of coherent algebras of graphs. Specifically,  we say that a subalgebra $\A$ of $\C^{n\times n}$ is {\em partially coherent} (with respect to  $ \{I, A_G\}$)
if it  is unital,  self-adjoint,  contains the all-ones matrix, and  is closed under Schur multiplication with the matrices  $I$ and $A_G$. 
As with coherent algebras, the intersection of two partially coherent algebras is again  a partially coherent algebra.
This allows to  define the \emph{partially coherent algebra of a graph $G$}, denoted $\PA_G$, to be the minimal  partially coherent algebra containing $A_G$.  We show the following:

 \begin{result}
 Consider two graphs  $G, H$  with  adjacency matrices $A_G$ and $A_H$ and partially coherent algebras $\PA_G$ and $\PA_H$ respectively. Then, we have that  $G \cong_\psd H$ if and only if  there exists a linear bijection $\phi: \PA_G \to \PA_H$ such that
 
 \begin{enumerate}
\item $\phi(M^\dagger) = \phi(M)^\dagger$ for all $M \in \PA_G$;
\item $\phi(MN) = \phi(M)\phi(N)$ for all $M,N \in \PA_G$;
\item $\phi(I) = I$, $\phi(A_G) = A_H$, and $\phi(J) = J$;
\item $\phi(M\schur N) = \phi(M)\schur\phi(N)$ for all $M \in \{I,A_G\}$ and $N \in \PA_G$.
\end{enumerate}
 \end{result}
 The notion of $\psd$-isomorphism appears to be a new graph relation, which we show implies several forms of cospectrality, e.g. see Lemmas \ref{lem:psd2cospec} and \ref{lem:laplacian}. Moreover, we show that $\psd$-isomorphism is equivalent to copsectrality of adjacency matrices when restricted to 1-walk-regular graphs (cf. Theorem \ref{thm:1walkregpsd}).

 Our main technique for studying  $\dnn$- and $\psd$-isomorphisms  is  a surprising  correspondence between $\K$-isomorphism matrices and linear maps $\Phi: \C^{V_G \times V_G} \to \C^{V_H \times V_H}$. Concretely, by considering a $\K$-isomorphism matrix $M$ for $G$ to $H$ as a Choi matrix, we can associate to it a linear map $\Phi_M: \C^{V_G \times V_G} \to~\C^{V_H \times V_H}$ given by $\left(\Phi_\M(X)\right)_{h,h'} = \sum_{g,g'} \M_{gh,g'h'} X_{g,g'}.$
 As it  turns out, maps constructed in this manner   have some remarkable properties. The idea for this construction is adopted  from Ortiz and Paulsen who applied it to winning correlations for the homomorphism game \cite{ortiz}.

As an  immediate consequence  of  the chain of inclusions  $\cp  \subseteq \cpsd \subseteq \dnn \subseteq \psd$, it follows that for all graphs $G$ and $H$ we have that 
\begin{equation}
G \cong H \ \Rightarrow \ G \cong_q H \ \Rightarrow \ G \cong_\dnn H \ \Rightarrow \ G \cong_\psd H,\label{eq:imps}
\end{equation}
 and in Section \ref{sec:separations} we show that none of these implications can be reversed.

In Section \ref{subsec:conicthetas}  we give yet another characterization of  $\K$-isomorphism  by combining  a conic generalization of   the celebrated  Lov\'asz theta   function and a new product of graphs. Specifically, for  any matrix  cone $\K$ consider  the graph parameter:
$$
\vartheta^\K(G) = \sup\left\{ \tr(MJ): \  M_{g,g'} = 0 \text{ if } g \sim g',  \  \tr(M) = 1, \ M \in \K\right\}.$$ 
For $\K = \psd$, the corresponding parameter is   the celebrated  Lov\'{a}sz theta function,  denoted by  $\vartheta$, whereas for  $\K = \dnn$, it is equal to a variant due to Schrijver, which  is usually denoted $\vartheta'$ \cite{lex}.
A nontrivial result~\cite{Pasechnik02} is that for $\K=\cp$ the parameter  $\vartheta^\cp$ is equal to the independence number of a graph.

In order to reformulate $\K$-isomorphism in terms of the graph parameter $\thetak$  we  make use of the  \emph{graph isomorphism product}, denoted $G \diamond H$, which has vertex set $V_G \times V_H$ and edges  $(g,h) \sim (g',h')$ if $\rel(g,g') \ne \rel(h,h')$.
In other words, vertices of $G \diamond H$ are adjacent exactly when the corresponding entry in an isomorphism matrix for $G$ to $H$ is required to be zero. Note that the isomorphism product of $G$ and $H$ is the complement of  the so-called weak {modular} product of graphs, e.g. see  \cite{handbook}.

 \begin{result}
 Consider two graphs  $G$ and $H$ and a matrix cone $\K \subseteq \psd$. Then $G \cong_\K H$ if and only if $\thetak(G \diamond H) = |V_G| = |V_H|$ and this value is attained. 
 \end{result}

For $\K = \cp$, the above result implies  that the graphs $G$ and $H$ are isomorphic if and only if $\alpha(G \diamond H) = {|V_G| = |V_H|}$. Nevertheless, this is not a new result  \cite{kozen}, and in fact it has long been known that $\alpha(G \diamond H)$ is the size of the largest common induced subgraph of $G$ and $H$.

Lastly, in Appendix \ref{sec:las} we show that $\dnn$- and $\psd$-isomorphisms respectively  correspond to the 
feasibility of (the first level of) the Lasserre hierarchy applied to appropriate  relaxations of the quadratic integer programming  formulation of the graph isomorphism problem.    
Specifically, two graphs $G$ and $H$    with  adjacency matrices $A_G$ and $A_H$ are isomorphic if and only if  there exists a permutation matrix $X=(X_{gh})$ such that $A_G=X^\top A_HX$. Thus,  the 0/1 solutions in the semi-algebraic set defined  by 
\begin{align}
&\label{new11} \sum_g X_{gh}=1,\\
&\sum_hX_{gh}=1,\\
&  \label{new31} X_{gh}X_{g'h'}=0 \text{ if } \rel(g,g')\ne \rel(h,h'),
\end{align}
encode all possible  isomorphisms between $G$ and $H$. 

Given  a semialgebraic set $\mathcal{K}=\{x\in [0,1]^n: g_i(x)\ge 0, \  i=1,\ldots, m\}$, the Lasserre hierarchy is a systematic method for producing tighter approximations to ${\rm conv}(\mathcal{K}\cap \{0,1\}^n)$ \cite{las}. Each level of the Lasserre  hierarchy  corresponds to a   semidefinite program and can be constructed using sums of squares representations of polynomials and the dual theory of moments. 

\begin{result}Two graphs  $G$ and $H$ are $\psd$-  (respectively $\dnn$-) isomorphic if the first level of the Lasserre hierarchy applied to \eqref{new11}-\eqref{new31} (respectively, adding entrywise nonnegativity) is feasible.
\end{result}

Summarizing, 
our results   give a surprising, and previously unknown, connection between coherent algebras, the Weisfeiler-Lehman algorithm, the Lasserre hierarchy, and Schrijver's theta function (see Theorem~\ref{thm:dnnsummary} and Remark~\ref{rem:dnnsummary}).

\section{Strategies for the isomorphism game}\label{sec:isogame}

In this section we recall the isomorphism game and briefly explain classical and quantum strategies. For more detailed background and additional properties  we refer the reader to~\cite{qiso}.

Recall that in the $(G,H)$-isomorphism game, each player receives a vertex from one of the graphs $G$ and $H$, and they must respond with a vertex from the other graph (see Equation~\ref{cond1}). In order to win, the vertices of $G$ that they receive/send must relate to each other (i.e., be equal, adjacent, or distinct and non-adjacent) in the same way as their vertices of $H$ (see Equation~\ref{cond2}). The players know $G$ and $H$ and can agree on a strategy beforehand, but they are not allowed to communicate once the game begins (i.e., once they receive their questions/vertices). We are interested in \emph{winning} or \emph{perfect} strategies, which are those that win with probability one.

\begin{remark}
Any winning strategy for the $(G,H)$-isomorphism game is also a winning strategy for the $(H,G)$-isomorphism game, as well as the $(\overline{G},\overline{H})$-isomorphism game. Here $\overline{G}$ refers to the \emph{complement} of $G$, i.e., the graph obtained from $G$ by replacing edges with non-edges and vice versa.
\end{remark}

For any fixed strategy for the $(G,H)$-isomorphism game, there is an associated joint conditional probability distribution, $p(\ya, \yb | \xa, \xb)$ for $\xa$, $\xb$, $\ya$, $\yb \in V_G \cup V_H$, which gives the probability of Alice and Bob responding with $\ya$ and $\yb$ when given inputs $\xa$ and $\xb$ respectively. The distribution $p$ is usually referred to as a \emph{correlation}. A given strategy for the $(G,H)$-isomorphism game is a winning strategy if and only if $p(\ya, \yb | \xa, \xb) = 0$ whenever $\xa, \xb, \ya$, and $\yb$ do not meet the winning conditions \eqref{cond1} and \eqref{cond2} defined  above.


\subsection{Classical Strategies}\label{subsec:classicalstrats}

In general, classical strategies allow Alice and Bob to have access to some shared randomness, such as a random binary string, which they can use to determine how they respond to the questions of the referee. However, for each value the shared randomness may assume, the corresponding strategy becomes deterministic. Mathematically, this says that any classical correlation $p$ can be written as $p = \sum_i \lambda_i p_i$ where $\lambda_i \ge 0$ for each $i$, with $\sum_i \lambda_i = 1$, and each $p_i$ corresponds to a deterministic classical strategy. The coefficients $\lambda_i$ encode the shared randomness used by the players. Since whether a correlation $p$ corresponds to a winning strategy is determined by its zeros, the correlation $p$ arises from a winning strategy if and only if $p_i$ is winning for all $i$ such that $\lambda_i > 0$.

A deterministic classical strategy consists of two functions $f_A$ and $f_B$ for Alice and Bob respectively that map inputs to outputs. Thus when Alice receives some input $x$, she will respond with $f_A(x)$, and Bob acts analogously. For the isomorphism game, it is not difficult to see that the functions $f_A$ and $f_B$ must be equal, and moreover that the restriction of them to $V_G$ (resp. $V_H$) is an isomorphism from $G$ to $H$ (resp. from $H$ to $G$). Furthermore, the restriction to $V_H$ is the inverse of the restriction to $V_G$. Thus the $(G,H)$-isomorphism game can be won perfectly with classical strategies if and only if $G$ and $H$ are actually isomorphic.

\subsection{Quantum Strategies}
In a quantum strategy the players can take advantage of shared quantum entanglement and measurements in order to produce their outputs. For our purposes we can restrict the shared entanglement to what are known as pure bipartite states of full Schmidt rank. Such a state corresponds to a unit vector $\psi \in \C^d \otimes \C^d$ which can be expressed as $\sum_{i\in[d]} \lambda_i \alpha_i \otimes \beta_i$ where $\mathcal{A} = \{\alpha_i : i\in[d]\}$ and $\mathcal{B} = \{\beta_i : i\in[d]\}$ are two orthonormal bases of $\C^d$ and $\lambda_i>0$ for all $i$. Any vector in $\C^d \otimes \C^d$ admits such a Schmidt decomposition if we allow $\lambda_i \ge 0$ and the restriction $\lambda_i>0$ reflects our assumption that the shared entangled state has full Schmidt rank. The two orthonormal bases, $\mathcal{A}$ and $\mathcal{B}$ are known as Schmidt bases of $\psi$ and there can be several choices for such a pair of bases. For a given shared state $\psi\in\C^d\otimes \C^d$, we intuitively think that the first tensor describes Alice's part of the state while the second one describes Bob's part. In order to extract classical information from this shared state Alice and Bob can measure their respective parts. A $k$-outcome quantum measurement of a $d$-dimensional system (space), also referred to as a POVM (Positive Operator Valued Measure), is described by a family of $k$ operators from $\psdd$ which add up to identity. We say that such a measurement is \emph{projective} if each of the positive semidefinite operators is an (orthogonal) projection. For more in-depth explanation of general quantum strategies we refer the  reader to \cite{NC}, or to \cite{qiso} for more details on quantum strategies for the isomorphism game specifically.

With these notions at hand we are ready to describe a general quantum strategy that Alice and Bob use to play a $(G,H)$-isomorphism game. First, Alice and Bob can choose the shared entangled state that they will use.
Next, each player can choose a quantum measurement which they will perform upon receiving $x\in V_G\cup V_H$.
Since any classical processing of the measurement outcome can be included in the measurement, without loss of generality, we can assume that each of the players respond with the measurement outcome they obtain. Hence, we index the measurement outcomes by elements of $V_G \cup V_H$. So a quantum strategy consists of a shared entangled state $\psi \in \mathbb{C}^d \otimes \mathbb{C}^d$ for some $d$, and quantum measurements $\mathcal{P}_x = (P_{xy} \in \psdd : y \in V_G \cup V_H)$ and $\mathcal{Q}_x = (Q_{xy} \in \psdd : y \in V_G \cup V_H)$ for each $x \in V_G \cup V_H$ for Alice and Bob respectively. According to the postulates of quantum mechanics, the probability of obtaining outcome $y$ and $y'$ upon measuring $\mathcal{P}_{x}$ and $\mathcal{Q}_{x'}$ respectively, is given by
\begin{equation}
p(y,y'| x,x') = \psi^\dagger \left(P_{xy} \otimes Q_{x'y'}\right)\psi.
\label{eqn:probp}
\end{equation}\\

It will often be useful to use the fact that the probability from Equation~\ref{eqn:probp} can be also expressed as
\begin{equation}\label{eqn:probs}
\psi^\dagger \left(P \otimes Q\right) \psi = \vect(\rho)^\dagger \left(P \otimes Q\right) \vect(\rho) = \vect(\rho)^\dagger \vect(P\rho Q^T) = \tr(\rho^\dagger P \rho Q^T)
\end{equation}
where $\psi := \vect(\rho)$ and $\vect : \C^{d_1 \times d_2} \to \C^{d_1} \otimes \C^{d_2}$ is the linear map defined by $\vect(e_i e_j^T) = e_i \otimes e_j$ and extended by linearity. In the above derivation, we have used the identities $\vect(AXB^T) = (A \otimes B) \vect(X)$ and $\tr(A^\dagger B) = \vect(A)^\dagger \vect(B)$ which can be verified by a direct calculation. We will also use the inverse map of $\vect$ which we denote by $\mat$. Note that $\mat$ takes vectors to matrices, \emph{i.e.}, $\mat : \C^{d_1} \otimes \C^{d_2} \to \C^{d_1 \times d_2}$. For notational convenience, we usually choose to express the shared state $\psi$, as well as the operators $P_{xy}$ and $Q_{xy}$, in a Schmidt basis of $\psi$. Note that in this basis, the operator $\rho  =  \mat(\psi)$ from Equation~(\ref{eqn:probs}) is a diagonal matrix with positive diagonal entries.

In~\cite{qiso} we showed that  perfect strategies of the isomorphism game  have a  special~form

\begin{theorem}\label{thm:stratform}
Let $G$ and $H$ be graphs and let $V := V_G \cup V_H$. Suppose that a shared state $\psi \in \C^d \otimes \C^d$ of full Schmidt rank and $\mathcal{P}_x = \{P_{xy} : y \in V\}$ and $\mathcal{Q}_x = \{Q_{xy} : y \in V\}$ for $x \in V$ comprise a winning strategy for the $(G,H)$-isomorphism game. Also, suppose that the operators $P_{xy}$ and $Q_{xy}$ as well as the shared state $\psi$ are expressed in a Schmidt basis of $\psi$ and let $\rho:= \mat(\psi)$. Then we have
\begin{enumerate}
\item $P_{xy} = Q_{xy}^T$ for all $x, y \in V$;
\item $P_{xy}$ and $Q_{xy}$ are projectors for all $x, y \in V$;
\item $P_{xy}\rho = \rho P_{xy}$ and $Q_{xy}\rho = \rho Q_{xy}$ for all $x, y \in V$;
\item $p(y, y' | x, x') := \psi^\dagger\left(P_{xy} \otimes Q_{x'y'}\right)\psi = 0$ if and only if $P_{xy}P_{x'y'} = 0$;
\item $P_{xy} = 0$ if $x,y \in V_G$ or $x,y \in V_H$;
\item $P_{xy} = P_{yx}$ for all $x, y \in V$.
\end{enumerate}
\end{theorem}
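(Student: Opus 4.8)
The plan is to extract two different kinds of information from the winning conditions and feed them into the trace formula $p(y,y'\mid x,x')=\tr(\rho P_{xy}\rho Q_{x'y'}^T)$ from \eqref{eqn:probs}, using throughout that in the chosen Schmidt basis $\rho=\mat(\psi)$ is a positive diagonal, hence invertible and self-adjoint, matrix. The ``type'' constraint \eqref{cond1} forces certain operators to vanish, giving item~5: computing Alice's marginal via completeness, $\sum_{y'}p(y,y'\mid x,x')=\tr(P_{xy}\rho^2)$, and noting that if $x,y$ lie in the same graph then answering $y$ on input $x$ loses regardless of Bob, this marginal is $0$; since $\rho^2\succ0$ and $P_{xy}\succeq0$ this forces $P_{xy}=0$. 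The other ingredient is that the game is \emph{synchronous}: applying \eqref{cond2} with $\ga=\gb$ shows that equal inputs demand equal outputs, so $p(y,y'\mid x,x)=0$ for $y\neq y'$, and this drives items~1--3.

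For items~1--3 I would run the synchronous-strategy argument, adapted to a possibly non-maximally entangled state. Combining synchronicity with the two completeness relations yields $\tr(\rho P_{xy}\rho Q_{xy}^T)=\tr(P_{xy}\rho^2)=\tr(Q_{xy}^T\rho^2)$. Writing $D:=P_{xy}-Q_{xy}^T$ (Hermitian), one checks directly that $\tr(\rho D\rho D)=\sum_{i,j}\rho_{ii}\rho_{jj}|D_{ij}|^2\ge0$. Expanding and using the identities above, the two cross terms contribute $-2\tr(P_{xy}\rho^2)$, while each square term is controlled by $\tr(\rho P_{xy}\rho P_{xy})\le\tr(P_{xy}\rho^2)$: setting $S:=\rho^{1/2}P_{xy}\rho^{1/2}$, one has $0\preceq S\preceq\rho$ because $P_{xy}\preceq I$, whence $\tr(S^2)\le\tr(S\rho)$ since $S$ and $\rho-S$ are both positive semidefinite. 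These bounds force $\tr(\rho D\rho D)\le0$, so $D=0$, which is item~1.

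The technical crux is extracting items~2 and~3 from the \emph{saturation} of $\tr(S^2)\le\tr(S\rho)$: equality means $\tr(S(\rho-S))=0$ with both factors positive semidefinite, forcing $S(\rho-S)=0$, i.e.\ $P_{xy}\rho P_{xy}=P_{xy}\rho$ after cancelling $\rho^{1/2}$ on both sides. Taking adjoints gives $P_{xy}\rho P_{xy}=\rho P_{xy}$, hence $P_{xy}\rho=\rho P_{xy}$ (item~3 for $P$; the statement for $Q$ follows by transposition via item~1), and substituting the commutation back yields $P_{xy}^2=P_{xy}$ (item~2). With commutation and projectivity in hand, item~4 is immediate: substituting $Q_{x'y'}^T=P_{x'y'}$ and moving $\rho$ through the projectors rewrites $p(y,y'\mid x,x')=\tr(\rho^2 P_{xy}P_{x'y'})=\|\rho P_{xy}P_{x'y'}\|_F^2$, so the probability vanishes exactly when $P_{xy}P_{x'y'}=0$.

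Finally, for item~6 I would feed item~4 back into \eqref{cond2}. Taking $x\in V_G$ and $y\in V_H$, if Alice answers $y$ on $x$ while Bob is asked $y$, then \eqref{cond2} (with $\rel(y,y)$ the equality relation) forces Bob to answer $x$; hence $p(y,x'\mid x,y)=0$ for all $x'\neq x$, which by item~4 gives $P_{xy}P_{yx'}=0$, and summing against $\sum_{x'}P_{yx'}=I$ yields $P_{xy}=P_{xy}P_{yx}$. The symmetric argument with the two inputs exchanged gives $P_{yx}=P_{yx}P_{xy}$; taking the adjoint of the first relation and combining then gives $P_{xy}=P_{yx}P_{xy}=P_{yx}$, the same-graph case being covered by item~5. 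I expect the main obstacle to be precisely the non-maximally entangled setting: because every identity is weighted by $\rho$, the clean conclusions (projectivity, commutation, and the Frobenius-norm rewriting) all hinge on the $\rho^{1/2}$-conjugation trick together with invertibility of $\rho$ to cancel the weights at the end.
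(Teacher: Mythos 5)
The paper does not prove Theorem~\ref{thm:stratform} itself; it is quoted from the earlier work \cite{qiso}, where the argument runs along exactly the lines you propose (the isomorphism game is synchronous, so one applies the structure theory of perfect strategies for synchronous games with a not-necessarily-maximally-entangled state $\rho$, extracting projectivity and $\rho$-commutation from the saturation of the trace inequalities). Your reconstruction is correct and complete: the marginal computation for item~5, the $\tr(\rho D\rho D)\ge 0$ versus $\tr(S^2)\le\tr(S\rho)$ squeeze for items~1--3, the commutation-based rewriting for item~4, and the $P_{xy}=P_{xy}P_{yx}$ bootstrapping for item~6 all check out, so there is nothing to flag.
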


We also observed in \cite{qiso} that Theorem~\ref{thm:stratform} allows us to reformulate the existence of a quantum homomorphism in the following way.

\begin{theorem}\label{thm:qreform}
Let $G$ and $H$ be graphs. Then $G \cong_q H$ if and only if there exist projectors $P_{gh}$ for $g \in V_G$ and $h \in V_H$ such that
\begin{enumerate}
\item $\sum_{h \in V_H} P_{gh} = I$ for all $g \in V_G$;
\item $\sum_{g \in V_G} P_{gh} = I$ for all $h \in V_H$;
\item $P_{gh} P_{g'h'} = 0$ if $\rel(g,g') \ne \rel(h,h')$.
\end{enumerate}
\end{theorem}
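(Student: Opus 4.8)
The plan is to prove the two implications separately, leaning on Theorem~\ref{thm:stratform} for the forward direction and on an explicit maximally entangled construction for the converse.

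For the forward direction, suppose $G\cong_q H$, so there is a perfect quantum strategy for the $(G,H)$-isomorphism game. By Theorem~\ref{thm:stratform} I may assume it has the special form, producing projectors $P_{xy}$ for $x,y\in V:=V_G\cup V_H$ satisfying properties (1)--(6) of that theorem. I would then simply retain the operators $P_{gh}$ with $g\in V_G$ and $h\in V_H$ and check the three conditions of the statement. Conditions (1) and (2) follow from the POVM completeness relation $\sum_{y\in V}P_{xy}=I$: property (5) kills every term whose two indices lie in the same graph, so for $x=g\in V_G$ the sum collapses to $\sum_{h\in V_H}P_{gh}=I$, and for $x=h\in V_H$ to $\sum_{g\in V_G}P_{hg}=I$, which is condition (2) after invoking the symmetry $P_{hg}=P_{gh}$ from property (6). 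Condition (3) is read off from property (4): whenever $\rel(g,g')\ne\rel(h,h')$, the play with inputs $g,g'\in V_G$ and outputs $h,h'\in V_H$ satisfies the first winning condition but violates the second, hence is losing with probability zero, and property (4) forces $P_{gh}P_{g'h'}=0$.

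For the converse, suppose projectors $P_{gh}$ on some $\C^d$ satisfy (1)--(3). I would build a strategy on the maximally entangled state $\psi=\tfrac{1}{\sqrt d}\vect(I)$, which has full Schmidt rank, so $\rho=\mat(\psi)=\tfrac{1}{\sqrt d}I$. Alice's measurement on input $g\in V_G$ is $\{P_{gh}:h\in V_H\}$ padded with zero operators for outputs in $V_G$; her measurement on input $h\in V_H$ is $\{P_{gh}:g\in V_G\}$ (reusing the same operators via the symmetric indexing $P_{hg}=P_{gh}$) padded with zeros for outputs in $V_H$. Conditions (1) and (2) guarantee these are genuine POVMs, and Bob's operators are $Q_{xy}=P_{xy}^T$. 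Using the identity $\psi^\dagger(P\otimes Q)\psi=\tr(\rho^\dagger P\rho Q^T)$ from \eqref{eqn:probs}, together with $\rho=\tfrac1{\sqrt d}I$ and $Q_{x'y'}^T=P_{x'y'}$, every probability collapses to $p(y,y'|x,x')=\tfrac1d\tr(P_{xy}P_{x'y'})$.

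It then remains to verify that this strategy wins with certainty, i.e.\ that $p$ vanishes on every losing play. If a play violates the first winning condition then one of the two operators is a zero pad, so the product, and hence $p$, is zero. If a play satisfies the first condition but violates the second, then in each of the four possible input patterns the probability is $\tfrac1d\tr(P_{gh}P_{g'h'})$ for the $G$-vertices $g,g'$ and $H$-vertices $h,h'$ determined by the play, and condition (3) makes the product vanish precisely because $\rel(g,g')\ne\rel(h,h')$; since the $P$'s are positive semidefinite, $P_{gh}P_{g'h'}=0$ gives $\tr(P_{gh}P_{g'h'})=0$, so $p=0$ and the strategy is perfect, yielding $G\cong_q H$. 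I expect the forward direction to be essentially bookkeeping once Theorem~\ref{thm:stratform} is in hand. The only genuinely delicate point is in the converse: one must choose the shared state carefully (the maximally entangled state is exactly what reduces the probability to a clean trace of a product of projectors), and then confirm that the single hypothesis (3), stated only for inputs in $V_G$, actually controls all four input patterns of the game once the symmetric relabelling $P_{hg}=P_{gh}$ and the transpose relation $Q_{xy}=P_{xy}^T$ are taken into account.
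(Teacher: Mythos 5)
Your proof is correct, and it follows exactly the route the paper intends: the paper states Theorem~\ref{thm:qreform} without an explicit proof, noting only that it is a consequence of Theorem~\ref{thm:stratform} (with details in the cited earlier work), and your forward direction is precisely that bookkeeping while your converse is the standard construction on the maximally entangled state $\tfrac{1}{\sqrt d}\vect(I)$ with $Q_{xy}=P_{xy}^T$, reducing every probability to $\tfrac1d\tr(P_{xy}P_{x'y'})$. You correctly identify and handle the one delicate point, namely that hypothesis (3), stated only for $V_G$-inputs, covers all four input patterns via the symmetric indexing $P_{hg}=P_{gh}$.
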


Note that by items (1) and (6) of Theorem~\ref{thm:stratform}, any quantum correlation $p$ that wins the $(G,H)$-isomorphism game satisfies the following:
\[p(y,y'|x,x') = p(x,y'|y,x') = p(y,x'|x,y') = p(x,x'|y,y') \text{ for all } x,x',y,y' \in V_G \cup V_H.\]
In other words, switching the input and output, for Alice or Bob, does not effect the corresponding probability. We refer to any correlation with this property as being \emph{input-output symmetric}. This symmetry allows us to use a smaller matrix when formulating classical and quantum isomorphisms as conic feasibility problems. Note that since quantum winning correlations for the isomorphism game are input-output symmetric, so are classical correlations.

\section{Conic Formulations}\label{sec:conic}

In this section we will prove Result 1, that graphs $G$ and $H$ are isomorphic (resp.~quantum isomorphic) if and only if there is a $\cp$-isomorphism matrix (resp.~$\cpsd$-isomorphism matrix) for $G$ to $H$.

\subsection{Classical Correlations}

Suppose that $p$ is a correlation for the $(G,H)$-isomorphism game. Define the matrix $M^p$ with rows and columns indexed by $V_G \times V_H$ entrywise as:
\[M^p_{gh,g'h'} = p(h,h'|g,g').\]
Note that the matrix $M^p$ does not contain all of the probabilities of $p$, only those corresponding to inputs from $V_G$ and outputs from $V_H$. Thus, in general the matrix $M^p$ may not completely determine the correlation $p$. However, since winning classical or quantum correlations are input-output symmetric, such correlations $p$ are determined by the matrix $M^p$. Also note that since Alice and Bob are symmetric, i.e., $p(y,y'|x,x') =  p(y',y|x',x)$ for all $x,x',y,y' \in V_G \cup V_H$ for a winning classical or quantum correlation $p$, we have that $M^p$ is symmetric.

Recall from above that a matrix is completely positive if it is the Gram matrix of entrywise nonnegative vectors. Equivalently, a matrix $M$ is completely positive if $M = \sum_i p_ip_i^T$ where $p_i \in \mathbb{R}^d$ are entyrwise nonnegative vectors. The equivalence of these two definitions follows from the fact that the matrix $PP^T$ is the Gram matrix of the rows of $P$ but is also equal to $\sum_i p_ip_i^T$ where $p_i$ is the $i^\text{th}$ column of $P$. This formulation of completely positive matrices will be useful for proving Theorem~\ref{thm:cconic} below.

The proof of the following theorem resembles the proof of Theorem~4.2 from~\cite{conichomos},  a similar result for the homomorphism game. But there the only concern was showing that the \emph{existence} of a homomorphism was equivalent to the existence of a completely positive matrix satisfying certain properties.

\begin{theorem}\label{thm:cconic}
Suppose $G$ and $H$ are graphs and $p$ is a correlation for the $(G,H)$-isomorphism game. Then $p$ is winning classical correlation if and only if $p$ is input-output symmetric and $M^p$ is a $\cp$-isomorphism matrix.
\end{theorem}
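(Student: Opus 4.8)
The plan is to prove both directions of the equivalence, treating the forward and backward implications separately. For the forward direction, I assume $p$ is a winning classical correlation and must show it is input-output symmetric with $M^p \in \cp$. Input-output symmetry follows immediately from the discussion at the end of Section~2: any classical winning correlation is a convex combination of deterministic winning strategies, each of which corresponds to an isomorphism, and such strategies are input-output symmetric by items (1) and (6) of Theorem~\ref{thm:stratform} (or directly, since deterministic winning strategies respond according to an isomorphism $f$ and its inverse). The real content is showing $M^p$ lies in $\cp$ and satisfies the isomorphism-matrix constraints \eqref{gsums}--\eqref{zeros}.

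For the membership $M^p \in \cp$, my approach is to exploit the convex-combination structure. Writing $p = \sum_i \lambda_i p_i$ with each $p_i$ a deterministic winning strategy given by an isomorphism $f_i$, each $p_i$ contributes a rank-one term to $M^p$. Concretely, for a deterministic strategy the entry $M^{p_i}_{gh,g'h'} = p_i(h,h' \mid g,g')$ is an indicator that is $1$ exactly when $h = f_i(g)$ and $h' = f_i(g')$, which means $M^{p_i} = u_i u_i^T$ where $u_i \in \{0,1\}^{V_G \times V_H}$ is the $0/1$ vector with $(u_i)_{gh} = 1$ iff $h = f_i(g)$. Hence $M^p = \sum_i \lambda_i u_i u_i^T = \sum_i (\sqrt{\lambda_i}\,u_i)(\sqrt{\lambda_i}\,u_i)^T$ is a sum of rank-one terms built from entrywise-nonnegative vectors, so $M^p \in \cp$ by the alternate characterization recalled just before the theorem statement. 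The linear constraints \eqref{gsums} and \eqref{hsums} then reduce to the normalization $\sum_i \lambda_i = 1$ combined with the fact that each $f_i$ is a bijection (so for fixed $g,g'$ exactly one pair $(h,h')$ is selected by each $f_i$), and \eqref{zeros} holds because each $f_i$ is an isomorphism and so respects $\rel$.

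For the backward direction, I start from a $\cp$-isomorphism matrix $M^p$ arising from an input-output symmetric correlation $p$ and must reconstruct a winning classical strategy. Here I would write $M^p = \sum_i u_i u_i^T$ with $u_i \geq 0$ entrywise (the $\cp$ decomposition), normalize to extract convex weights, and argue that each contributing rank-one piece encodes a deterministic winning strategy. The key subtlety is that an arbitrary $\cp$ decomposition need not have $0/1$ vectors, so I must show that the constraints \eqref{gsums}--\eqref{zeros} force enough structure: the zero pattern from \eqref{zeros} guarantees the support of each $u_i$ is compatible with $\rel$, and the row-sum/column-sum constraints force the normalized vectors to behave like bijections. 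I then recover the correlation from $M^p$ using input-output symmetry (so that $M^p$ determines all of $p$) and conclude via the classical characterization of Section~\ref{subsec:classicalstrats} that $p$ is winning.

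I expect the main obstacle to be the backward direction, specifically extracting honest deterministic strategies (bijections respecting $\rel$) from a generic completely positive decomposition whose vectors are merely nonnegative rather than $0/1$. The argument must show that the combination of \eqref{gsums}, \eqref{hsums}, and the zero constraints \eqref{zeros} pins down the decomposition tightly enough that each component can be interpreted probabilistically as a deterministic winning play; this is exactly the step where the analogy with the homomorphism result of~\cite{conichomos} breaks down, since there one only needed \emph{existence} of a homomorphism rather than a faithful reconstruction of the full correlation.
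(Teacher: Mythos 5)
Your forward direction is essentially identical to the paper's: decompose the classical winning correlation as a convex combination of deterministic correlations, observe that each deterministic winning strategy is given by an isomorphism $f_i$, and write $M^{p_i} = u_i u_i^T$ for the $0/1$ indicator vector of the graph of $f_i$, so that $M^p = \sum_i (\sqrt{\lambda_i}\,u_i)(\sqrt{\lambda_i}\,u_i)^T \in \cp$. That part is complete and correct.

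The backward direction, however, is a plan rather than a proof, and the step you yourself flag as the main obstacle is exactly the one you leave unresolved. You say that the row-sum and zero constraints ``force the normalized vectors to behave like bijections,'' but the constraints \eqref{gsums}--\eqref{zeros} are constraints on $M^p$ as a whole, not on the individual rank-one pieces $v^i (v^i)^T$ of an arbitrary $\cp$ decomposition; it is not automatic that each piece individually inherits the block-sum structure. The missing idea, which is the heart of the paper's argument, is this: conjugating $M^p$ by $I \otimes e$ (summing each $V_H \times V_H$ block) yields the all-ones matrix $J$ by \eqref{gsums}, and $J$ is a \emph{rank-one} psd matrix written as a sum of the psd matrices $\widehat{N}^i$ obtained by conjugating each $N^i = v^i(v^i)^T$ the same way; a rank-one psd matrix can only decompose as a sum of psd matrices that are each nonnegative scalar multiples of it, so $\widehat{N}^i = \mu_i J$ with $\mu_i > 0$. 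This single observation is what delivers everything you need: positivity of the block sums guarantees that for each $g$ there is at least one $h$ with $v^i_{gh} > 0$ (while the zero pattern of \eqref{zeros} gives at most one, and injectivity, and that the resulting map is an isomorphism), and the constancy of $\widehat{N}^i_{gg'} = v^i_{gf(g)} v^i_{g'f(g')}$ over all $g,g'$ forces the nonzero entries of $v^i$ to be equal, so that $N^i$ is genuinely a scalar multiple of a deterministic correlation matrix and not merely supported on one. Without this argument the reconstruction of $p$ as a convex combination of deterministic winning correlations does not go through, so as written the backward direction has a genuine gap.
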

\proof
Suppose that $p$ is  a winning classical correlation. Then we already know that $p$ is input-ouput symmetric, so it remains to show that $M^p$ is a $\cp$-isomorphism matrix. First, since $p$ is a winning correlation we have that Equations~(\ref{gsums}) and~(\ref{zeros}) hold. Also, since $p$ is input-output symmetric, we have that Equation~(\ref{hsums}) holds. Thus we only need to show that $M^p \in \cp$.

Since $p$ is classical, it is a convex combination of correlations arising from deterministic classical strategies. In other words, there are positive numbers $\lambda_1, \ldots, \lambda_k$ such that $\sum_{i=1}^k \lambda_i = 1$, and deterministic correlations $p_i$ such that
\[p = \sum_{i=1}^n \lambda_i p_i.\]
It is easy to see that $M^p = \sum_i \lambda_i M^{p_i}$. Since $p_i$ is deterministic, there exists a graph isomorphism $f_i :V_G \to V_H$ such that
\[p_i(h,h'|g,g') = \begin{cases} 1 & \text{if } h = f_i(g) \ \& \ h' = f_i(g') \\ 0 & \text{o.w.} \end{cases}\]
for all $g,g' \in V_G$ and $h,h' \in V_H$. Let $v^i$ be a real vector with coordinates indexed by $V_G \times V_H$ such that
\[v^i_{gh} = \begin{cases} 1 & \text{if } h = f_i(g) \\ 0 & \text{o.w.} \end{cases}.\]
It is straightforward to see that $M^{p_i} = v^i {v^i}^T$. Since the $v^i$ are entrywise nonnegative and the $\lambda_i$ are positive, it follows that $M^p$ is completely positive.

Conversely, suppose that $p$ is input-output symmetric and $M^p$ is a $\cp$-isomorphism matrix. It immediately follows from the definition of isomorphism matrices that $p$ is a winning correlation for the $(G,H)$-isomorphism game, so it remains to show that it is classical.

Since $M^p$ is completely positive, there are nonzero, entrywise nonnegative vectors $v^1, \ldots, v^k$ such that $M^p = \sum_i v^i {v^i}^T$. Let $N^i = v^i{v^i}^T$. Our aim is to show that each $v^i$ corresponds to a deterministic correlation, i.e., that $N^i$ is a scalar multiple of $M^{p_i}$ for some deterministic correlation $p_i$.

First, we show that the $N^i$ have uniform block sums, i.e., that $\sum_{h,h'} N^i_{gh,g'h'}$ does not depend on $g,g' \in V_G$. For each $i$, let $\widehat{N}^i$ be a matrix with rows and columns indexed by $V_G$ such that
\[\widehat{N}^i_{gg'} := \sum_{h,h' \in V_H} N^i_{gh,g'h'}.\]
Also let
\[\widehat{M}^p_{gg'} = \sum_{h,h' \in V_H} M^p_{gh,g'h'}.\]
So we have that $\widehat{M}^p = \sum_i \widehat{N}^i$. Note that $\widehat{M}^p$ can be obtained by conjugating $M^p$ by the matrix $I \otimes e$, where $e$ is the all ones vector, and $\widehat{N}^i$ can be obtained from $N^i$ similarly. Since $M^p$ and the $N^i$ are completely positive, they are also positive semidefinite and therefore so are $\widehat{M}^p$ and the $\widehat{N}^i$.

Since $\sum_{h,h'} p(h,h'|g,g') = 1$ for all $g,g' \in V_G$, we have that $\widehat{M}^p$ is equal to the all ones matrix $J$. However, $J$ is a rank 1 positive semidefinite matrix and we have shown it can be written as the sum of the positive semidefinite matrices $\widehat{N}^i$. This is only possible if each $\widehat{N}^i$ is a positive (since $v^i$ is nonzero) scalar multiple of $J$. Let $\mu_i$ be such that $\widehat{N}^i = \mu_i J$ and note that $\mu_i > 0$ for all $i$ and $\sum_i \mu_i = 1$.

Now we will show that each $v^i$ corresponds to a deterministic correlation. For notational simplicity, let $v = v^1$, $N = N^1$, and $\widehat{N} = \widehat{N}^1$. Since the $v^i$ are nonnegative, we must have that $N_{gh,g'h'} = 0$ if $\rel(g,g') \ne \rel(h,h')$. Suppose that $h \ne h'$ and $v_{gh} , v_{gh'} > 0$. Then $N_{gh,gh'} = v_{gh}v_{gh'} > 0$, a contradiction. Therefore, for each $g \in V_G$, there exists a unique $h \in V_H$ such that $v_{gh} > 0$. Let $f: V_G \to V_H$ be the function such that $v_{gf(g)} > 0$ for all $g \in V_G$. By an analogous argument to the above, we have that $f$ is injective. Moreover, since $N_{gh,g'h'} = 0$ for $\rel(g,g') \ne \rel(h,h')$, we must have that $f$ is an isomorphism of $G$ and $H$.

We want to show that $v_{gf(g)} = v_{g'f(g')}$ for all $g,g' \in V_G$. However,
\[\widehat{N}_{gg'} := \sum_{h,h' \in V_H} N_{gh,g'h'} = v_{gf(g)} v_{g'f(g')}.\]
Since $\widehat{N}$ is a positive multiple of $J$ by the above argument, we have that $v_{gf(g)} v_{g'f(g')}$ is constant for all $g,g' \in V_G$, and therefore $v_{gf(g)}$ is constant. It is easy to see that $v_{gf(g)} = \sqrt{\mu_1}$ and $N = \mu_1 M^q$ where $q$ is the correlation arising from the deterministic strategy obtained from using the isomorphism $f$. The same holds for all $i$ and therefore $p$ is a convex combination of correlations arising from deterministic classical strategies, i.e., $p$ is classical.\qeds

The above theorem is similar to a more general characterization of classical correlations given in~\cite{SV}. However, our result is different in that we use a smaller matrix whose rows and columns are indexed by $V_G \times V_H$, instead of $(V_G \cup V_H) \times (V_G \cup V_H)$. This is possible because of the input-output symmetry of winning classical/quantum correlations for the isomorphism game that is not present in arbitrary games.

Note that we must include the assumption of input-output symmetry in the theorem above, as otherwise we do not have any information about the other probabilities of $p$. Thus,  this assumption is dropped, it could be possible that $p$ is classical when the inputs are both from $V_G$, but not for other~inputs.


\subsection{Quantum Correlations}

To prove the characterization for winning quantum correlations for the isomorphism game, we first need the following lemma which was proven in~\cite{conichomos}  (see also \cite[Lemma 3.6]{SV}).

\begin{lemma}\label{lem:Gram}
Let $X$ and $Y$ be finite sets and let $M$ be the Gram matrix of vectors $w_{xy}$ for $x \in X$, $y \in Y$ which lie in some inner product space. The following assertions are equivalent:
\bi 
\item  Then there exists a unit vector $w$  satisfying $ \sum_{y \in Y} w_{xy} = w \text{ for all } x \in X$.
\item $ \sum_{y,y' \in Y} M_{xy,x'y'} = 1 \text{ for all } x,x' \in X.$ 

\ei 
\end{lemma}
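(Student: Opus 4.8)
The plan is to reduce both assertions to a single statement about the \emph{row-sum vectors} $w_x := \sum_{y \in Y} w_{xy}$, one for each $x \in X$. The key observation is that, by bilinearity of the inner product (the summation coefficients are all equal to $1$, hence real, so no conjugation issues arise even in the complex case),
\[
\sum_{y,y' \in Y} M_{xy,x'y'} = \sum_{y,y' \in Y} \langle w_{xy}, w_{x'y'}\rangle = \Big\langle \sum_{y} w_{xy},\ \sum_{y'} w_{x'y'}\Big\rangle = \langle w_x, w_{x'}\rangle.
\]
Thus the second assertion is precisely the statement that $\langle w_x, w_{x'}\rangle = 1$ for all $x,x' \in X$, and the whole lemma becomes a short Gram-matrix argument about the vectors $w_x$.

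For the forward implication I would simply substitute: if there is a unit vector $w$ with $w_x = \sum_y w_{xy} = w$ for every $x$, then $\langle w_x, w_{x'}\rangle = \langle w, w\rangle = \|w\|^2 = 1$ for all $x,x'$, which is exactly the second assertion.

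For the converse I would start from $\langle w_x, w_{x'}\rangle = 1$ for all $x,x'$. Taking $x = x'$ shows that each $w_x$ is a unit vector. For any two indices $x,x'$ I would then expand
\[
\|w_x - w_{x'}\|^2 = \langle w_x, w_x\rangle - \langle w_x, w_{x'}\rangle - \langle w_{x'}, w_x\rangle + \langle w_{x'}, w_{x'}\rangle = 1 - 1 - 1 + 1 = 0,
\]
so all the $w_x$ coincide with a common unit vector. Setting $w := w_{x_0}$ for any fixed $x_0 \in X$ then gives $\sum_y w_{xy} = w_x = w$ for all $x$, which establishes the first assertion.

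There is no serious obstacle here: once the double sum is recognized as the inner product of the two row-sum vectors, the equivalence is immediate. The only point requiring a little care is the passage from $\langle w_x, w_{x'}\rangle = 1$ to $w_x = w_{x'}$; rather than invoking the equality case of the Cauchy--Schwarz inequality directly, I prefer the cleaner route of showing $\|w_x - w_{x'}\|^2 = 0$, which works uniformly over real and complex inner product spaces and sidesteps any conjugation subtleties.
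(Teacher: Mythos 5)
Your proof is correct and complete. The paper does not actually prove this lemma---it cites it from earlier work (\cite{conichomos} and \cite[Lemma 3.6]{SV})---but your argument is the standard one for this statement: identifying the double sum with $\langle w_x, w_{x'}\rangle$, deducing that each $w_x$ is a unit vector, and concluding $w_x = w_{x'}$ via $\|w_x - w_{x'}\|^2 = 0$ is exactly the intended reasoning, and your handling of the complex case is careful and sound.
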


Using this we can now prove the following:

\begin{theorem}\label{thm:qconic}
Suppose $G$ and $H$ are graphs and $p$ is a correlation for the $(G,H)$-isomorphism game. Then $p$ is a winning quantum correlation if and only if $p$ is input-output symmetric and $M^p$ is a $\cpsd$-isomorphism matrix.
\end{theorem}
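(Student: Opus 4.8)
The plan is to prove the two implications separately. The forward direction is a direct computation built on the structure theorem; the converse, where $p$ is only assumed input-output symmetric rather than winning, is the substantive part.

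For the forward direction, I would start from a winning quantum strategy written in a Schmidt basis and set $\rho:=\mat(\psi)$, a diagonal (hence Hermitian) matrix with $\tr(\rho^\dagger\rho)=1$. Input-output symmetry is already known to follow from items~(1) and~(6) of Theorem~\ref{thm:stratform}, so it remains to verify \eqref{gsums}--\eqref{mink}. Combining \eqref{eqn:probs} with $Q_{g'h'}=P_{g'h'}^T$ gives $M^p_{gh,g'h'}=\tr(\rho^\dagger P_{gh}\rho P_{g'h'})$. Setting $\sigma_{gh}:=\rho^{1/2}P_{gh}\rho^{1/2}$, each $\sigma_{gh}$ is Hermitian psd and $\langle\sigma_{gh},\sigma_{g'h'}\rangle=M^p_{gh,g'h'}$, so $M^p\in\cpsd$, which is \eqref{mink}. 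Equation \eqref{zeros} is exactly winning condition \eqref{cond2} restricted to inputs in $V_G$ and outputs in $V_H$. Finally, since $\{P_{gh}:h\in V_H\}$ is a POVM (using item~(5) to discard the $V_G$ outcomes), $\sum_h\sigma_{gh}=\rho^{1/2}\big(\sum_h P_{gh}\big)\rho^{1/2}=\rho$ is independent of $g$ and of unit norm, so Lemma~\ref{lem:Gram} yields \eqref{gsums}; the symmetric computation using $\sum_g P_{gh}=I$ yields \eqref{hsums}.

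For the converse, suppose $p$ is input-output symmetric and $M^p$ is a $\cpsd$-isomorphism matrix; summing \eqref{gsums} and \eqref{hsums} gives $|V_G|=|V_H|$. Write $M^p$ as the Gram matrix of Hermitian psd matrices $\sigma_{gh}$. By Lemma~\ref{lem:Gram} the block sums $\sum_h\sigma_{gh}$ and $\sum_g\sigma_{gh}$ are each constant and of unit norm, and comparing $\sum_{g,h}\sigma_{gh}$ two ways forces a common value $w:=\sum_h\sigma_{gh}=\sum_g\sigma_{gh}$ with $\tr(w^2)=1$. I would then (i) show $p$ is winning, and (ii) realize it by a quantum strategy. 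For (ii), restrict to the support of $w$ so that $w\succ0$, set $E_{gh}:=w^{-1/2}\sigma_{gh}w^{-1/2}$ (Hermitian psd, with $\sum_h E_{gh}=\sum_g E_{gh}=I$, hence POVMs), take $\psi:=\vect(w)$, and define measurements by $P_{gh}=P_{hg}=E_{gh}$, zeros on same-graph pairs, and $Q_{xy}=P_{xy}^T$. A computation from \eqref{eqn:probs} shows the resulting correlation $\tilde p$ satisfies $\tilde p(h,h'|g,g')=\tr(wE_{gh}wE_{g'h'})=M^p_{gh,g'h'}$, so $M^{\tilde p}=M^p$.

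The hard part is (i), proving that the merely input-output-symmetric $p$ is in fact winning. For inputs in $V_G$, \eqref{gsums} together with total probability forces outputs into $V_H\times V_H$, and \eqref{zeros} is \eqref{cond2}; input-output symmetry transports this to inputs in $V_H$. The genuine difficulty is a mixed input $(g,h)\in V_G\times V_H$: input-output symmetry does \emph{not} relate the wrong output block $V_G\times V_H$ to $M^p$, so the defining equations do not constrain it directly. The resolution is the identity
\[ \sum_{g'\in V_G,\,h'\in V_H} M^p_{gh',g'h} = \Big\langle \sum_{h'}\sigma_{gh'},\ \sum_{g'}\sigma_{g'h}\Big\rangle = \langle w,w\rangle = \tr(w^2)=1, \]
whose left-hand side, rewritten through input-output symmetry, is the total probability of the correct output block $V_H\times V_G$ for input $(g,h)$; since it already equals $1$, the offending block carries zero probability. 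Thus $p$ is winning, and likewise $\tilde p$ (its strategy being manifestly symmetric); both are input-output symmetric and determined by their common matrix $M^p=M^{\tilde p}$, since winning forces all off-type blocks to vanish. Hence $p=\tilde p$ is a winning quantum correlation. The central obstacle is precisely this mixed-input block, overcome by the trace identity $\tr(w^2)=1$; the only minor technical point is the possible singularity of $w$, which I handle by passing to its support.
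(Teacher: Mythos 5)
Your proof is correct and follows essentially the same route as the paper: the forward direction uses the structure theorem to write $M^p_{gh,g'h'}=\tr(\rho^{1/2}P_{gh}\rho^{1/2}\cdot\rho^{1/2}P_{g'h'}\rho^{1/2})$, and the converse Gram-decomposes $M^p$ into psd matrices, invokes Lemma~\ref{lem:Gram} twice to get a common marginal $w$, restricts to its support, and conjugates by $w^{-1/2}$ to build the POVMs and the state $\vect(w)$. The one place you go beyond the paper is the mixed-input case in step (i): the paper asserts that $p$ being winning ``immediately follows from the definition of isomorphism matrices,'' whereas the wrong output block for an input $(g,h)\in V_G\times V_H$ is not directly constrained by $M^p$ via input-output symmetry, and your cross-sum identity $\sum_{g',h'}M^p_{gh',g'h}=\langle w,w\rangle=1$ is exactly the right way to close that step.
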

\proof
Suppose that $p$ is a winning quantum correlation for the $(G,H)$-isomorphism game. By the same reasoning as in the proof of Theorem~\ref{thm:cconic}, we have that $M^p$ satisfies Equations~(\ref{gsums}), (\ref{hsums}), and~(\ref{zeros}). So it remains to show that $M^p \in \cpsd$.

Since $p$ is a quantum correlation, by Theorem~\ref{thm:stratform} and Equation~(\ref{eqn:probs}) we have that
\[M^p_{gh,g'h'} = \tr(\rho^\dagger P_{gh} \rho P_{g'h'}).\]
Moreover, we may assume that $\rho$ is a diagonal matrix with strictly positive diagonal entries. Thus $\rho$ is invertible and $\rho^{1/2}$ exists. Thus,
\[M^p_{gh,g'h'} = \tr(\rho P_{gh} \rho P_{g'h'}) = \tr\left((\rho^{1/2} P_{gh} \rho^{1/2}) (\rho^{1/2} P_{g'h'} \rho^{1/2})\right).\]
The expression on the righthand side above is the Hilbert-Schmidt inner product of the matrices $\rho^{1/2} P_{gh} \rho^{1/2}$ and $\rho^{1/2} P_{g'h'} \rho^{1/2}$, which are both positive semidefinite. Thus, $M^p$ is a Gram matrix of positive semidefinite matrices., i.e., it is completely positive semidefinite as desired.

Conversely, suppose that $p$ is input-output symmetric and $M^p$ is a $\cpsd$-isomorphism matrix. It immediately follows from the definition of isomorphism matrices that $p$ is a winning correlation for the $(G,H)$-isomorphism game. So it remains to show that $p$ is quantum. Since $M^p \in \cpsd$, there exist positive semidefinite matrices $\rho_{gh}$ for $g \in V_G$ and $h \in V_H$ such that $M^p_{gh,g'h'} = \tr(\rho_{gh}\rho_{g'h'})$. Since
\[\sum_{h,h'} M^p_{gh,g'h'} = 1 \text{ for all } g, g' \in V_G,\]
we can apply Lemma~\ref{lem:Gram}. Therefore, there exists a matrix $\rho$ such that
\[\sum_{h \in V_H} \rho_{gh} = \rho \text{ for all } g \in V_G,\]
and $\tr(\rho^\dagger\rho) = 1$. This implies that $\rho$ is positive semidefinite and that the column space of $\rho_{gh}$ is contained in the column space of $\rho$ for all $g \in V_G, h \in V_H$. Therefore, by restricting to a subspace if necessary, we may assume that $\rho$ is positive definite, and thus $\rho^{-1/2}$ exists.

Define $P_{gh} = \rho^{-1/2}\rho_{gh}\rho^{-1/2}$, and note that this is positive semidefinite. We have that
\[\sum_{h \in V_H} P_{gh} = \rho^{-1/2}\left(\sum_{h \in V_H} \rho_{gh}\right)\rho^{-1/2} = \rho^{-1/2}\rho\rho^{-1/2} = I.\]
Therefore, $\{P_{gh} : h \in V_H\}$ is a valid quantum measurement. We would like to also show that $\{P_{gh} : g \in V_G\}$ is a valid quantum measurement. To do this it suffices to show that $\sum_{g} \rho_{gh} = \rho$ for all $h \in V_H$. Since $M^p$ is an isomorphism matrix, we have that
\[\sum_{g,g'} M^p_{gh,g'h'} = 1 \text{ for all } h, h' \in V_H.\]
Thus we can apply Lemma~\ref{lem:Gram} again to obtain a positive semidefinite matrix $\rho'$ such that
\[\sum_{g \in V_G} \rho_{gh} = \rho' \text{ for all } h \in V_H.\]
However, we must have that
\[|V_G|\rho = \sum_{g \in V_G} \sum_{h \in V_H} \rho_{gh} = \sum_{h \in V_H} \sum_{g \in V_G} \rho_{gh} = |V_H|\rho'.\]
Since the sum of the entries of $M^p$ is equal to both $|V_G|^2$ and $|V_H|^2$ depending on which of Equations~(\ref{gsums}) and~(\ref{hsums}) you consider, we have that $\rho' = \rho$, and thus $\sum_{g} \rho_{gh} = \rho$ as desired. Therefore, $\{P_{gh} : g \in V_G\}$ is a valid quantum measurement.

Define $P_{hg} = P_{gh}$ and $P_{gg'} = 0 = P_{hh'}$ for all $g,g' \in V_G$ and $h,h' \in V_H$, and let $\psi = \vect(\rho)$. Then it is easy to see that for $g,g' \in V_G$ and $h, h' \in V_H$,
\[\psi^\dagger \left(P_{gh} \otimes P^T_{g'h'}\right)\psi = \tr(\rho P_{gh} \rho P_{g'h'}) = \tr(\rho_{gh}\rho_{g'h'}) = p(h,h'|g,g').\]
Switching $g$ and $h$ or $g'$ and $h'$ does not change any of the values above and so the correlation $p$ can be realized by Alice performing measurements $\mathcal{P}_x = (P_{xy} : y \in V_G \cup V_H)$ and Bob performing measurements $\mathcal{Q}_x = (P^T_{xy} : y \in V_G \cup V_H)$ on the shared state $\psi$. Thus $p$ is a quantum correlation and we have proven the theorem.\qeds

As with Theorem~\ref{thm:cconic}, the theorem above is similar to a more general characterization of synchronous quantum correlations given in~\cite{SV}, but we are able to use a smaller matrix due to input-output symmetry.

By the containments $\cp \subseteq \cpsd \subseteq \dnn \subseteq \psd$ and Theorems~\ref{thm:cconic} and~\ref{thm:qconic}, we have the following chain of implications:
\begin{equation}\label{eq:chain}
G \cong H \ \Rightarrow \ G \cong_q H \ \Rightarrow \ G \cong_\dnn H \ \Rightarrow \ G \cong_\psd H.
\end{equation}
We will see in Section~\ref{sec:separations} that none of these implications can be reversed.

\section{From isomorphism matrices to isomorphism maps}\label{sec:linearmaps}
Our main technique for studying  $\dnn$- and $\psd$-isomorphism is  a correspondence between isomorphism matrices and linear maps between the space of matrices indexed by $V_G$ and those indexed by $V_H$. 


\subsection{Linear  maps preserving matrix cones and their properties}\label{subsec:CPTP}

A linear map $\Phi: \mathbb{C}^{m \times m} \to \mathbb{C}^{n \times n}$ is  \emph{positive} if it maps psd matrices to psd matrices, i.e.,   $\Phi(X)$ is psd whenever $X$ is psd. 
We recall the following well-known theorem, e.g. see \cite{choi}: 
\begin{lemma}\label{choistuff}
For a  linear map $\Phi: \mathbb{C}^{m \times m} \to \mathbb{C}^{n \times n}$  the following assertions  are equivalent:
\begin{itemize}

\item[(i)] $\Phi$ is $m$-positive, i.e., the map $\text{id}_m\otimes \Phi$ is positive. 
\item[(ii)] The {\em Choi  matrix}  of $\Phi$, defined by $C_\Phi = \sum_{i,j=1}^m
 E_{ij} \otimes \Phi(E_{ij}),$ is psd.
\item[(iii)] $\Phi$ admits a {Kraus decomposition}, i.e.,  there exist matrices $K_i\in \C^{n\times m},  \ 1\le i\le mn$ such that 
$$\Phi(X)=\sum_iK_iXK_i^\dagger.$$
\item[(iv)] $\Phi$ is completely positive\footnote{Completely positive maps and completely positive matrices are not related (to our knowledge). This is simply an unfortunate collision of well-established terms. However, it should always be clear from context which notion we are referring to.}, i.e., the map  if $\text{id}_r \otimes \Phi$ is positive for all $r \in \mathbb{N}$. 

\end{itemize}
\end{lemma}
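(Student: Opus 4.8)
The plan is to establish the cyclic chain of implications $(i)\Rightarrow(ii)\Rightarrow(iii)\Rightarrow(iv)\Rightarrow(i)$. The object that ties everything together is the (unnormalized) maximally entangled matrix $\Omega := \sum_{i,j=1}^m E_{ij}\otimes E_{ij} = \psi\psi^\dagger$, where $\psi := \sum_{i=1}^m e_i\otimes e_i = \vect(I_m)$; note that $\Omega$ is rank-one and positive semidefinite, and that applying $\text{id}_m\otimes\Phi$ to $\Omega$ produces exactly the Choi matrix $C_\Phi$. This single observation drives the two directions that leave or enter the Choi picture.

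I would dispatch the three easy implications first. For $(i)\Rightarrow(ii)$: since $\Phi$ is $m$-positive, $\text{id}_m\otimes\Phi$ is positive, and it sends the psd matrix $\Omega$ to $C_\Phi$, so $C_\Phi$ is psd. For $(iii)\Rightarrow(iv)$: if $\Phi(X)=\sum_i K_i X K_i^\dagger$, then $(\text{id}_r\otimes\Phi)(Y)=\sum_i(I_r\otimes K_i)\,Y\,(I_r\otimes K_i)^\dagger$ for every $r$, and each summand is psd whenever $Y$ is (writing $Y=\sum_a v_a v_a^\dagger$ and pushing the conjugation through), so $\text{id}_r\otimes\Phi$ is positive for all $r$, i.e.\ $\Phi$ is completely positive. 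Finally $(iv)\Rightarrow(i)$ is immediate, specializing to $r=m$.

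The substantive step is $(ii)\Rightarrow(iii)$. Starting from $C_\Phi\succeq 0$, I would take a psd decomposition $C_\Phi=\sum_{i=1}^{r}\kappa_i\kappa_i^\dagger$ with $\kappa_i\in\C^m\otimes\C^n$ and $r=\rk(C_\Phi)\le mn$ (from the spectral decomposition, folding each eigenvalue into its eigenvector). Reshape each $\kappa_i$ into a candidate Kraus operator $K_i\in\C^{n\times m}$ via the inverse vectorization $\mat$ (up to the transpose dictated by the chosen $\vect$ convention), so that $\kappa_i=\sum_j e_j\otimes K_i e_j$. The claim to verify is $\Phi(E_{jk})=\sum_i K_i E_{jk}K_i^\dagger$ for all basis matrices $E_{jk}$, whence the general Kraus formula follows by linearity. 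This is checked by reading off the $(j,k)$ block of size $n\times n$ on both sides: the $(j,k)$ block of $C_\Phi$ is by definition $\Phi(E_{jk})$, while the $(j,k)$ block of $\kappa_i\kappa_i^\dagger$ equals $(K_i e_j)(K_i e_k)^\dagger=K_i E_{jk}K_i^\dagger$, and summing over $i$ matches the two expressions.

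The main obstacle is purely bookkeeping in this last step: matching the vectorization/reshaping convention so that the blocks of $\kappa_i\kappa_i^\dagger$ genuinely reproduce $K_i E_{jk}K_i^\dagger$, and confirming the rank bound $r\le mn$ so that the stated number of Kraus operators is correct. No analytic difficulty arises; the content is linear-algebraic reshaping, and the positivity facts invoked (conjugation preserves psd, sums of psd matrices are psd, and a positive map sends the specific psd matrix $\Omega$ to $C_\Phi$) are all elementary.
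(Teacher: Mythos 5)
Your proposal is correct and follows essentially the same route as the paper, which proves the same cyclic chain and handles $(ii)\Rightarrow(iii)$ by factoring the Choi matrix as $C_\Phi=\sum_i\kappa_i\kappa_i^\dagger$ (the paper via Cholesky, you via the spectral decomposition — an immaterial difference) and reshaping the $\kappa_i$ into Kraus operators. The paper only sketches this in one sentence; your block-matching computation fills in the details correctly.
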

The  implications $(i)\implies (ii), (iii)\implies  (iv), (iv)\implies (i)$ are straighforward, whereas  $ (ii) \implies (iii)$ follows by considering a Cholesky factorization of the Choi matrix $C_\Phi$. Furthermore, we note that  is no relation between completely positive maps and the cone of completely positive matrices.


 A linear  map $\Phi: \mathbb{C}^{m \times m} \to \mathbb{C}^{n \times n}$ is  \emph{trace-preserving} (TP) if $\tr(\Phi(X)) = \tr(X)$ for all $X \in \mathbb{C}^{m \times m}$, and  {\em unital} if $\Phi(I_m) = I_n$. 
 Note that if $\Phi: \mathbb{C}^{m \times m} \to \mathbb{C}^{n \times n}$ is both trace-preserving and unital, we necessarily have that  $m = n$, since $n=\tr(I_n)=\tr(\Phi(I_m))=\tr(I_m)=m$. Completely positive trace-preserving (CPTP) linear maps are important in the theory of quantum information because they are in some sense the most general type of map allowed by the formalism of quantum mechanics.


In the next lemma we collect some useful properties of completely positive  maps in terms of their Kraus decompositions that   we invoke throughout this section. The proofs are easy and are omitted. 
\begin{lemma}\label{properties}
Consider a  completely positive  map   $\Phi: \mathbb{C}^{m \times m} \to \mathbb{C}^{n \times n} $  with Kraus decomposition $\Phi(X)=\sum_iK_iXK_i^\dagger$. Then, we have that: 
\begin{itemize} 
\item[(i)] The  adjoint of $\Phi $  is given by  $\Phi^\dagger(Y) = \sum_{i}  K_i^\dagger Y K_i$ for all $Y\in \C^{n\times n}$. In particular, the adjoint of a completely positive map is also completely positive (as it admits itself a Kraus decomposition).

\item[(ii)] $\Phi$   is  unital if and only if $\sum_i K_i K_i^\dagger=I$. 
\item[(iii)] $\Phi$ is trace-preserving if and only if $\sum_i K_i^\dagger K_i = I$. This is also equivalent to $\Phi^\dagger(I)=I$.
\item[(iv)] $\Phi(X^\dagger) = \Phi(X)^\dagger$ for any $X\in \C^{m\times m}$. 
\item[(v)] $\Phi$ (resp. $\Phi^\dagger$) is sum-preserving if $\Phi^\dagger(J)=J$ (resp. $\Phi(J)=J)$.

\end{itemize}
\end{lemma}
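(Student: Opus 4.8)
The plan is to verify each of the five assertions by a direct computation from the Kraus decomposition $\Phi(X)=\sum_i K_i X K_i^\dagger$, relying throughout on cyclicity of the trace and on the fact that the adjoint $\Phi^\dagger$ is taken with respect to the Hilbert--Schmidt inner product $\langle A,B\rangle = \tr(A^\dagger B)$. I would establish (i) first, since both (iii) and (v) are most cleanly phrased through the adjoint and reuse its explicit form.

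For (i), I would start from $\langle \Phi(X), Y\rangle = \tr\big((\sum_i K_i X K_i^\dagger)^\dagger Y\big)$, expand the dagger as $\Phi(X)^\dagger = \sum_i K_i X^\dagger K_i^\dagger$, and then cycle the trace to rewrite the expression as $\tr\big(X^\dagger \sum_i K_i^\dagger Y K_i\big) = \langle X, \sum_i K_i^\dagger Y K_i\rangle$. Since this holds for every $X$, the adjoint must be $\Phi^\dagger(Y)=\sum_i K_i^\dagger Y K_i$. As this is itself a Kraus decomposition, with Kraus operators $K_i^\dagger$, Lemma~\ref{choistuff} gives that $\Phi^\dagger$ is again completely positive.

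The remaining identities are then immediate. For (ii), $\Phi(I)=\sum_i K_iK_i^\dagger$, so unitality is exactly $\sum_i K_iK_i^\dagger=I$. For (iii), $\tr(\Phi(X))=\sum_i \tr(K_iXK_i^\dagger)=\tr\big(X\sum_i K_i^\dagger K_i\big)$ by cyclicity, which equals $\tr(X)$ for all $X$ iff $\sum_i K_i^\dagger K_i=I$; comparing with the adjoint formula from (i), namely $\Phi^\dagger(I)=\sum_i K_i^\dagger K_i$, yields the stated equivalence with $\Phi^\dagger(I)=I$. For (iv) one simply observes $\Phi(X^\dagger)=\sum_i K_i X^\dagger K_i^\dagger = \big(\sum_i K_i X K_i^\dagger\big)^\dagger=\Phi(X)^\dagger$.

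For (v), I would use the identity that the sum of all entries of a matrix $A$ equals $\langle J, A\rangle$, which follows from $J=\one\,\one^\dagger$ and $\tr(JA)=\one^\dagger A\,\one$. Thus $\Phi$ is sum-preserving iff $\langle J, \Phi(X)\rangle = \langle J, X\rangle$ for all $X$; moving $\Phi$ across the inner product via its adjoint rewrites the left-hand side as $\langle \Phi^\dagger(J), X\rangle$, so the condition becomes $\Phi^\dagger(J)=J$, and the statement for $\Phi^\dagger$ follows symmetrically. There is no genuine obstacle here, as the lemma itself advertises: every step is a one-line trace manipulation. The only points requiring care are the bookkeeping of which Kraus operators carry a dagger and the consistent use of the Hilbert--Schmidt adjoint rather than a mere entrywise conjugate transpose.
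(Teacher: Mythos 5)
Your proposal is correct: the paper explicitly omits the proof of this lemma as routine, and your computations (Hilbert--Schmidt adjoint via cyclicity of the trace for (i), direct evaluation at $I$ for (ii) and (iii), termwise conjugate transposition for (iv), and $\mathrm{sum}(A)=\tr(JA)$ combined with the adjoint for (v)) are exactly the standard one-line verifications the authors had in mind. No gaps; in fact your argument for (v) establishes the stronger ``if and only if'' version of the stated implication.
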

As  mentioned in  Lemma \ref{choistuff},  a linear map $\Phi: {\mathbb{C}^{m \times m} \to \mathbb{C}^{n \times n}}$ is completely positive  if and only if its Choi matrix is psd. It turns out that an analogous property holds for all   cones of interest to this work.  

Specifically,  
if $\Phi : \mathbb{C}^{m \times m} \to \mathbb{C}^{n \times n}$ is a linear map and $\mathcal{K}$ is a matrix cone,  we say that $\Phi$ is \emph{$\mathcal{K}$-preserving} if $X \in \mathcal{K}$ implies that  $\Phi(X) \in \mathcal{K}$. 
Using this  terminology, a linear map $\Phi$ is  positive if and only if  it is $\psd$-preserving. 
We now  prove a sufficient condition for showing a map is $\K$-preserving:

\begin{lemma}\label{dfvbrg}
For a matrix cone  $\mathcal{K} \in \{\cp, \cpsd, \dnn, \psd\}$ and   a  linear map $\Phi: \mathbb{C}^{m \times m} \to \mathbb{C}^{n \times n}$ we have that  $C_\Phi \in \K$ implies that  $\Phi$ is $\K$-preserving.
\end{lemma}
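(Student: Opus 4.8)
The plan is to reduce everything to a single bookkeeping identity relating $\Phi$ to its Choi matrix, and then to exploit the Gram-type descriptions of the four cones. First I would index the rows and columns of $C_\Phi$ by pairs $(i,k)\in[m]\times[n]$ and observe that the definition $C_\Phi=\sum_{i,j}E_{ij}\otimes\Phi(E_{ij})$ gives $(C_\Phi)_{(i,k),(j,l)}=(\Phi(E_{ij}))_{kl}$, so that by linearity of $\Phi$, for every $X\in\mathbb{C}^{m\times m}$,
\[
(\Phi(X))_{kl}=\sum_{i,j=1}^m X_{ij}\,(C_\Phi)_{(i,k),(j,l)}.
\]
This identity is the only place the specific form of the Choi matrix enters, and it turns the claim into a statement about how a fixed bilinear combination of the entries of $C_\Phi$ acts on $X$.

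For the three ``Gram cones'' $\psd,\cpsd,\cp$ I would argue uniformly. Each of these cones consists exactly of the Gram matrices (with respect to the relevant inner product) of objects drawn from a class $\mathcal{C}$ that is closed under tensor products and sums: arbitrary vectors for $\psd$, Hermitian psd matrices for $\cpsd$, and entrywise nonnegative vectors for $\cp$. Given $X\in\K$ with factorization $X_{ij}=\langle a_i,a_j\rangle$ and $C_\Phi\in\K$ with factorization $(C_\Phi)_{(i,k),(j,l)}=\langle b_{ik},b_{jl}\rangle$, where all $a_i,b_{ik}\in\mathcal{C}$, I would set $\omega_k:=\sum_i a_i\otimes b_{ik}$. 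Each $\omega_k$ again lies in $\mathcal{C}$ by closure under tensor products and sums, and using the multiplicativity of inner products under tensor products, namely $\langle a_i\otimes b_{ik},\,a_j\otimes b_{jl}\rangle=\langle a_i,a_j\rangle\langle b_{ik},b_{jl}\rangle$, the displayed identity yields
\[
(\Phi(X))_{kl}=\sum_{i,j}\langle a_i,a_j\rangle\langle b_{ik},b_{jl}\rangle=\langle\omega_k,\omega_l\rangle.
\]
Hence $\Phi(X)$ is the Gram matrix of the $\omega_k$ and therefore $\Phi(X)\in\K$. (The case $\K=\psd$ also follows immediately from Lemma~\ref{choistuff}, since $C_\Phi\in\psd$ makes $\Phi$ completely positive, hence positive.)

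The remaining and genuinely different case is $\K=\dnn$, which is not a Gram cone but rather the intersection $\dnn=\psd\cap\mathcal{N}$, where $\mathcal{N}$ denotes the cone of entrywise nonnegative matrices; this is where I expect the only real obstacle, as $\dnn$ admits no single tensor-closed factorization to feed into the argument above. I would handle it by preserving the two defining properties separately. If $C_\Phi\in\dnn$ then in particular $C_\Phi\in\psd$, so by the $\psd$ case $\Phi$ is positive and $\Phi(X)$ is psd whenever $X$ is; and since additionally every entry $(C_\Phi)_{(i,k),(j,l)}\ge 0$, the identity shows $(\Phi(X))_{kl}=\sum_{i,j}X_{ij}(C_\Phi)_{(i,k),(j,l)}\ge 0$ for every entrywise nonnegative $X$. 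Thus $\Phi(X)$ is simultaneously psd and entrywise nonnegative, i.e.\ $\Phi(X)\in\dnn$, completing all four cases. The only routine verifications left are the tensor-product inner-product identity and the closure of each factor class $\mathcal{C}$ under tensor products and sums, both of which are direct computations.
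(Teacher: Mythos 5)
Your proof is correct, and it rests on the same key identity as the paper's: writing $(\Phi(X))_{kl}=\sum_{i,j}X_{ij}(C_\Phi)_{(i,k),(j,l)}$ and then exploiting closure of the cones under tensor products. The difference is one of packaging. The paper argues uniformly for all four cones at the level of abstract closure properties: $C\schur(X\otimes J_n)$ is a principal submatrix of $C\otimes(X\otimes J_n)$, each $\K$ is closed under tensor products, principal submatrices, and block-summing, and $\Phi(X)_{\ell k}$ is a block sum of $C\schur(X\otimes J_n)$. You instead unroll these closure properties into explicit Gram factorizations: your $\omega_k=\sum_i a_i\otimes b_{ik}$ is precisely the Gram-vector incarnation of ``take the principal submatrix of the tensor product, then sum the block,'' so for $\psd$, $\cpsd$, and $\cp$ the two arguments are the same computation viewed from the factorization side versus the cone side. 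Where you genuinely diverge is $\dnn$: since it is not a Gram cone, you split it as $\psd\cap\mathcal{N}$ and verify the two conditions separately, whereas the paper simply asserts that $\dnn$ enjoys the same closure properties (which is true, since both $\psd$ and $\mathcal{N}$ do). Your route makes the $\cpsd$ and $\cp$ cases completely concrete at the cost of a case split; the paper's is shorter but leaves more ``routine to check'' claims implicit. Both are valid.
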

\begin{proof} 

Set  $C := C_\Phi \in \K$.  Using the fact that $X_{i,j} = (X \otimes J_n)_{i\ell,jk}$ 
 we have that
\be\label{vdfvb}
\Phi(X)_{\ell,k} = \sum_{i,j \in [m]} C_{i\ell, jk}X_{i,j}= \sum_{i,j \in [m]} C_{i\ell,jk} (X \otimes J_n)_{i\ell,jk} = \sum_{i,j \in [m]} (C \schur (X \otimes J_n))_{i\ell,jk}.
\ee
Note that the matrix $C \schur (X \otimes J_n)$ is a principal submatrix of $C \otimes (X \otimes J_n)$. Thus, since the all ones matrix $J$ is in $\K$ for all $\K \in \{\cp, \cpsd, \dnn, \psd\}$, and all such $\K$ are easily seen to be closed under tensor products and taking principal submatrices, we have that $C \schur (X \otimes J_n) \in \K$ whenever $X\in \K$. Furthermore,   equation  \eqref{vdfvb}  implies that the $\ell,k$  entry of $\Phi(X)$ is obtained by summing up the entries of the $\ell,k$ block of the block matrix  $C \schur (X \otimes J_n)$, and it is routine to check that $\K$ is closed under this operation for all $\K \in \{\cp, \cpsd, \dnn, \psd\}$. 
\end{proof}

We are now ready to prove the  following analogue of Lemma \ref{choistuff} for the cones of interest:

\begin{lemma}\label{lem:Kpreserving}
Consider a matrix cone  $\mathcal{K} \in \{\cp, \cpsd, \dnn, \psd\}$ and   a  linear map $\Phi: \mathbb{C}^{m \times m} \to \mathbb{C}^{n \times n}$. The  following assertions  are equivalent:
\begin{itemize}
\item[(i)] The map $\text{id}_m\otimes \Phi$ is $\mathcal{K}$-preserving. 
\item[(ii)]  The Choi matrix $C_\Phi$ lies in $  \K$.
\item[(iii)] $\Phi$ is completely $\mathcal{K}$-preserving, i.e.,  the map  $\text{id}_r \otimes \Phi$ is $\mathcal{K}$-preserving for all $r \in \mathbb{N}$.
\end{itemize}
\end{lemma}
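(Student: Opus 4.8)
The plan is to establish the three equivalences by proving the cyclic chain $(i) \Rightarrow (ii) \Rightarrow (iii) \Rightarrow (i)$, exactly paralleling the proof of Choi's theorem (Lemma~\ref{choistuff}). The implication $(iii) \Rightarrow (i)$ is immediate by specializing $r = m$, so the real work lies in the first two implications.

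For $(i) \Rightarrow (ii)$, I would recover the Choi matrix as the image under $\text{id}_m \otimes \Phi$ of a single fixed matrix that already lies in the smallest cone $\cp$. Let $v = \sum_{i \in [m]} e_i \otimes e_i \in \C^{m^2}$ and set $\omega_m = vv^T = \sum_{i,j \in [m]} E_{ij} \otimes E_{ij}$. Since $v$ is a nonnegative $01$-vector, $\omega_m$ is a Gram matrix of nonnegative vectors, hence $\omega_m \in \cp \subseteq \K$ for every cone under consideration. A direct computation gives $(\text{id}_m \otimes \Phi)(\omega_m) = \sum_{i,j} E_{ij} \otimes \Phi(E_{ij}) = C_\Phi$. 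Thus if $\text{id}_m \otimes \Phi$ is $\K$-preserving, then $C_\Phi \in \K$, which is $(ii)$.

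The implication $(ii) \Rightarrow (iii)$ is where the content sits, and my approach is to reduce it to the already-proven Lemma~\ref{dfvbrg} applied to the amplified map $\text{id}_r \otimes \Phi$. Concretely, I would compute the Choi matrix of $\text{id}_r \otimes \Phi$ and show it lies in $\K$. Writing the domain indices of $\C^{rm}$ as pairs $(a,i)$ with $a \in [r]$, $i \in [m]$, one obtains
\[ C_{\text{id}_r \otimes \Phi} = \sum_{a,b,i,j} (E_{ab} \otimes E_{ij}) \otimes (E_{ab} \otimes \Phi(E_{ij})). \]
Reordering the four tensor slots so that the two $\C^r$-factors are grouped together and the $\C^m$- and $\C^n$-factors are grouped together, this becomes $\omega_r \otimes C_\Phi$ up to conjugation by a fixed permutation matrix $P$, where $\omega_r = \sum_{a,b} E_{ab} \otimes E_{ab}$ is the rank-one $01$-matrix from the previous step. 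Since $\omega_r \in \cp \subseteq \K$ and $C_\Phi \in \K$ by hypothesis, and since each of the four cones is closed under tensor products and under conjugation by permutation matrices, we conclude $C_{\text{id}_r \otimes \Phi} \in \K$. Applying Lemma~\ref{dfvbrg} to the map $\text{id}_r \otimes \Phi$ then yields that $\text{id}_r \otimes \Phi$ is $\K$-preserving, establishing $(iii)$.

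The main obstacle is purely bookkeeping: correctly identifying $C_{\text{id}_r \otimes \Phi}$ with the permuted tensor $\omega_r \otimes C_\Phi$, and verifying the two closure properties invoked. Closure under tensor products was already noted in the proof of Lemma~\ref{dfvbrg}; closure under permutation-conjugation $X \mapsto PXP^\dagger$ is routine for all four cones — for $\psd$ and $\dnn$ it is immediate, and for $\cp$ and $\cpsd$ it follows by permuting the defining Gram vectors (respectively psd matrices). I would take care to phrase the tensor reindexing as an explicit relabeling of basis elements, so that the permutation matrix $P$ is manifestly independent of $X$; this is precisely what licenses the single clean application of Lemma~\ref{dfvbrg} to the amplified map.
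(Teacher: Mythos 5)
Your proposal is correct and follows essentially the same route as the paper: the implication $(i)\Rightarrow(ii)$ via applying $\text{id}_m\otimes\Phi$ to $\psi_m\psi_m^T\in\cp\subseteq\K$, and $(ii)\Rightarrow(iii)$ by identifying the Choi matrix of $\text{id}_r\otimes\Phi$ with a permutation-conjugate of $\psi_r\psi_r^T\otimes C_\Phi$ and invoking Lemma~\ref{dfvbrg} together with closure of $\K$ under tensor products and relabeling. No gaps.
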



\proof

$(i) \implies (ii)$.
The Choi matrix of $\Phi$ is equal to
\[C_\Phi = \sum_{i,j \in [m]} E_{ij} \otimes \Phi(E_{ij}) = (\text{id}_m \otimes \Phi)\left(\sum_{i,j} E_{ij} \otimes E_{ij}\right) = (\text{id}_m \otimes \Phi)\left(\sum_{i,j} e_ie_j^T \otimes e_ie_j^T\right) = (\text{id}_m \otimes \Phi)\left(\psi_m \psi_m^T\right),\]
where $\psi_m = \sum_{i=1}^m e_i \otimes e_i \in \mathbb{C}^m \otimes \mathbb{C}^m$. Since $\psi_m \psi_m^T \in \K$ for all $\K \in \{\cp, \cpsd, \dnn, \psd\}$, and 
$\text{id}_m\otimes \Phi$ is $\mathcal{K}$-preserving,  we have that $C_\Phi \in \K$.


$(ii)\implies (iii)$. 
Assume  that $C := C_\Phi \in \K$.  By Lemma \ref{dfvbrg} it suffices to  show that the Choi matrix of $\mathrm{id}_r \otimes \Phi$ is in $\K$ for all $r$. 
Using $\text{id}_{rm} = \text{id}_r \otimes \text{id}_m$, we have that the Choi matrix of $\text{id}_r \otimes \Phi$ is equal to
\begin{align*}
(\text{id}_r \otimes \text{id}_m) \otimes (\text{id}_r \otimes \Phi) \left(\sum_{i,j \in [r], \ \ell,k \in [m]} (E_{ij} \otimes E_{\ell k}) \otimes (E_{ij} \otimes E_{\ell k}) \right) &= \sum_{i,j \in [r], \ \ell,k \in [m]} E_{ij} \otimes E_{\ell k} \otimes E_{ij} \otimes \Phi(E_{\ell k}).
\end{align*}
Up to a permutation of the tensors, this is equal to
\[\left(\sum_{i,j \in [r]} E_{ij} \otimes E_{ij}\right) \otimes C_\Phi = \psi_r\psi_r^T \otimes C_\Phi,\]
which is in $\K$ since $\K$ is closed under tensor products. Since permuting the tensors is equivalent to conjugation by some permutation matrix, i.e., consistently relabeling rows and columns, this does not change whether the matrix is in $\K$ and thus we have proven the claim.

$(iii)\implies (i)$. Straightforward. \qeds 

We conclude this section with a lemma we use repeatedly in the remainder of this  article. 

\begin{lemma}\label{newlemma2}
Let $D \in \mathbb{C}^{m \times n}$ be a matrix and let $u \in \mathbb{C}^n$, $v \in \mathbb{C}^m$. Then the following are equivalent:
\begin{enumerate}
\item[$(1)$] $D(u \schur w) = v \schur (Dw)$ for all $w \in \mathbb{C}^n$.
\item[$(2)$] $D_{ij} = 0$ whenever $v_{i} \ne u_{j}$.
\item[$(3)$] $D^\dag(v \schur z) = u \schur (D^\dag z)$ for all $z \in \mathbb{C}^m$.
\end{enumerate}
\end{lemma}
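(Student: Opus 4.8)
The plan is to prove the chain of equivalences by showing that both functional identities $(1)$ and $(3)$ are individually equivalent to the pointwise zero-pattern condition $(2)$, using nothing more than a coordinatewise expansion of the Schur products. The single idea that makes everything work is that an identity of vectors required to hold \emph{for all} test vectors is the same as the vanishing of each coordinate coefficient, which one extracts by specializing the test vector to the standard basis.

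First I would prove $(1)\iff(2)$. Expanding in coordinates, the $i$-th entry of $D(u\schur w)$ is $\sum_j D_{ij}u_j w_j$, while the $i$-th entry of $v\schur(Dw)$ is $v_i\sum_j D_{ij}w_j$. Hence $(1)$ is equivalent to $\sum_j D_{ij}(u_j-v_i)w_j = 0$ holding for every $w\in\mathbb{C}^n$ and every $i$. Letting $w$ range over the standard basis vectors $e_k$ shows this is equivalent to $D_{ij}(u_j-v_i)=0$ for all $i,j$, which says exactly that $D_{ij}=0$ whenever $v_i\ne u_j$; this is condition $(2)$.

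Next I would treat $(3)\iff(2)$. One option is a direct computation identical in spirit to the above: using $(D^\dagger)_{ji}=\overline{D_{ij}}$, the $j$-th entries of the two sides of $(3)$ are $\sum_i \overline{D_{ij}}v_i z_i$ and $u_j\sum_i \overline{D_{ij}}z_i$, so $(3)$ reduces to $\overline{D_{ij}}(v_i-u_j)=0$ for all $i,j$, and a complex number vanishes iff its conjugate does, yielding $(2)$ again. A cleaner finish, which I would prefer to record, is to observe that condition $(2)$ is manifestly symmetric under the substitution $(D,u,v)\mapsto(D^\dagger,v,u)$ (recalling $D_{ij}=0\iff(D^\dagger)_{ji}=0$), so $(3)\iff(2)$ is nothing but the already-established equivalence $(1)\iff(2)$ applied to the matrix $D^\dagger$ with the roles of $u$ and $v$ interchanged.

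There is no genuine obstacle here; the statement is a bookkeeping identity. The only point deserving a word of care is the passage from ``the identity holds for all $w$'' to ``every coefficient vanishes,'' which is legitimate precisely because the standard basis spans $\mathbb{C}^n$. This is also where one must stay mildly attentive when working over $\mathbb{C}$ in part $(3)$, since the adjoint introduces complex conjugates that have to be tracked but in the end do not affect the zero pattern.
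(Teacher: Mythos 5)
Your proof is correct and follows essentially the same route as the paper's: both arguments reduce the identity in $(1)$ to a statement about the standard basis vectors (the paper evaluates the two linear maps $w \mapsto D(u\schur w)$ and $w\mapsto v\schur(Dw)$ on each $e_j$, which is the same computation as your coordinatewise expansion), and both obtain $(3)$ from the observation that the zero pattern of $D^\dagger$ is the transpose of that of $D$. No substantive difference.
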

\begin{proof}
Consider the linear maps $f(w) = D(u \schur w)$ and $g(w) = v \schur (Dw)$. Letting $D_j$ be the $j^\text{th}$ column of $D$ and $e_j$ the $j^\text{th}$ standard basis vector, it follows that $f(e_j) = u_jD_j $ and $g(e_j) = v \schur D_j$. Consequently, $D(u \schur w) = v \schur (Dw)$ for all  $w\in \C^d$  is equivalent to the statement
\[ f(e_j) = g(e_j) \text{ for all } j \in [d] \iff u_jD_j = v \schur D_j \text { for all } j \in [d],\]
which is in turn equivalent to $D_{ij} = 0$ whenever $u_j \ne v_i$. Lastly, to get the third equivalence, note that $D_{ij} =0$ whenever $v_i \ne u_j$ is equivalent to $D^\dag_{ji}=0$ whenever $u_j \ne v_i$.
\end{proof}

Now consider a map $\Phi : \mathbb{C}^{V_G \times V_G} \to \mathbb{C}^{V_H \times V_H}$ with Choi matrix $M$. Define $\tilde{M}$ entrywise as $\tilde{M}_{hh',gg'} = M_{gh,g'h'}$. It is straightforward to check that $\vect(\Phi(X)) = \tilde{M}\vect(X)$ for all $X \in \mathbb{C}^{V_G \times V_G}$ and $\vect(\Phi^\dag(Y)) = \tilde{M}^\dag \vect(Y)$ for all $Y \in \mathbb{C}^{V_H \times V_H}$. Through this correspondence Lemma~\ref{newlemma2} immediately yields the following:

\begin{lemma}\label{newlemma}
Let $\Phi : \mathbb{C}^{V_G \times V_G} \to \mathbb{C}^{V_H  \times V_H}$ be a linear map with  Choi matrix $M$. For any fixed pair of matrices  $X \in \mathbb{C}^{V_G \times V_G}$ and $Y \in \mathbb{C}^{V_H \times V_H}$ the  following are equivalent:
\begin{enumerate}
\item[$(1)$] $\Phi(X \schur W) = Y \schur \Phi(W)$ for all $W \in \mathbb{C}^{V_G \times V_G}$.
\item[$(2)$] $M_{gh,g'h'} = 0$ whenever $X_{gg'} \ne Y_{hh'}$.
\item[$(3)$] $\Phi^\dag(Y \schur Z) = X \schur \Phi^\dag(Z)$ for all $Z \in \mathbb{C}^{V_H \times V_H}$.
\end{enumerate}
\end{lemma}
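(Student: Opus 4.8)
The plan is to reduce this lemma directly to Lemma~\ref{newlemma2} through the vectorization isomorphism, exactly as the setup immediately preceding the statement anticipates. Recall that $\vect$ is a linear bijection between $\mathbb{C}^{V_G \times V_G}$ and $\mathbb{C}^{V_G} \otimes \mathbb{C}^{V_G}$ (with inverse $\mat$), and that the map $\Phi$ with Choi matrix $M$ satisfies $\vect(\Phi(X)) = \tilde{M}\vect(X)$ and $\vect(\Phi^\dag(Y)) = \tilde{M}^\dag\vect(Y)$, where $\tilde{M}_{hh',gg'} = M_{gh,g'h'}$. I would set $D := \tilde{M}$, which is indexed by $(V_H \times V_H) \times (V_G \times V_G)$, together with $u := \vect(X)$ and $v := \vect(Y)$, vectors indexed by $V_G \times V_G$ and $V_H \times V_H$ respectively, so that the data of Lemma~\ref{newlemma2} is exactly matched.

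The first step is to record the one identity that makes everything go through: vectorization intertwines the Schur product, i.e.\ $\vect(X \schur W) = \vect(X) \schur \vect(W)$ for all $X, W$, since both sides have $(g,g')$-entry equal to $X_{gg'}W_{gg'}$. Setting $w := \vect(W)$, this yields $\vect(\Phi(X \schur W)) = D(u \schur w)$ and $\vect(Y \schur \Phi(W)) = v \schur (Dw)$. Because $\vect$ is a bijection, quantifying over all $W \in \mathbb{C}^{V_G \times V_G}$ is the same as quantifying over all $w$, so condition $(1)$ of the present lemma is equivalent to condition $(1)$ of Lemma~\ref{newlemma2}. A symmetric computation using $\vect(Y \schur Z) = v \schur \vect(Z)$ and $\vect(\Phi^\dag(Z)) = \tilde{M}^\dag \vect(Z)$ identifies condition $(3)$ here with condition $(3)$ of Lemma~\ref{newlemma2}.

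Next I would translate the zero-pattern condition. Reading off the index correspondence, the $((h,h'),(g,g'))$-entry of $D = \tilde{M}$ is exactly $M_{gh,g'h'}$, while $v_{hh'} = Y_{hh'}$ and $u_{gg'} = X_{gg'}$. Hence condition $(2)$ of Lemma~\ref{newlemma2}, namely $D_{ij} = 0$ whenever $v_i \ne u_j$, becomes precisely $M_{gh,g'h'} = 0$ whenever $X_{gg'} \ne Y_{hh'}$, which is condition $(2)$ here. Invoking the equivalence $(1) \Leftrightarrow (2) \Leftrightarrow (3)$ of Lemma~\ref{newlemma2} then closes the argument.

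The only real obstacle is bookkeeping: one must keep the double indices $(g,g')$ and $(h,h')$ straight and confirm that passing from $\tilde{M}$ to $\tilde{M}^\dag$ correctly matches the swap of roles of $u$ and $v$ between conditions $(1)$ and $(3)$ of Lemma~\ref{newlemma2}. No genuinely new idea is required beyond the Schur-product identity for $\vect$; everything else is a faithful relabeling, which is exactly why the statement follows \emph{immediately}.
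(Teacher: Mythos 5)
Your proposal is correct and follows exactly the route the paper takes: the paper also obtains Lemma~\ref{newlemma} as an immediate consequence of Lemma~\ref{newlemma2} via the correspondence $\vect(\Phi(X)) = \tilde{M}\vect(X)$, $\vect(\Phi^\dag(Y)) = \tilde{M}^\dag\vect(Y)$ with $\tilde{M}_{hh',gg'} = M_{gh,g'h'}$; you have merely made explicit the Schur-product identity for $\vect$ and the index bookkeeping that the paper leaves as ``straightforward to check.''
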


\subsection{The Choi matrix as a  $\K$-isomorphism  matrix }\label{subsec:isomaps}

\begin{theorem}\label{thm:isomap}
Consider two graphs $G,H$, a cone of matrices  $\K \in \{\cp, \cpsd, \dnn, \psd\}$ and  a linear map $\Phi: \mathbb{C}^{V_G \times V_G} \to~\mathbb{C}^{V_H \times V_H}$. The following assertions are equivalent:
\bi 
\item[$(1)$]
The Choi matrix of $\Phi$ is a $\K$-isomorphism matrix from $G$ to $H$. 
\item[$(2)$] $\Phi$  is a $\K$-isomorphism map from $G$ to $H$, i.e., it satisfies
 \begin{align}
 &  \Phi \text{ is completely } \K\textnormal{-preserving}  \label{eq:preserving}\\
&\Phi(I \schur X) = I \schur \Phi(X) \text{ for all } X \in \mathbb{C}^{V_G \times V_G} \label{eq:verts}\\
&\Phi(A_G \schur X) = A_H \schur \Phi(X) \text{ for all } X \in \mathbb{C}^{V_G \times V_G} \label{eq:edges}\\
&\Phi(J) = J = \Phi^\dagger(J)\label{eq:J},
\end{align}
\item[$(3)$] $\Phi^\dag$ is a $\K$-isomorphism map from  $H$ to $G$. 
\ei 
\end{theorem}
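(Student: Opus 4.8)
The plan is to prove the cycle $(1) \Rightarrow (2) \Rightarrow (3) \Rightarrow (1)$, though in practice $(1) \Leftrightarrow (2)$ is the substantive content and $(2) \Leftrightarrow (3)$ will follow by a symmetry argument using the self-adjoint structure of the hypotheses. Throughout, write $M = C_\Phi$ for the Choi matrix of $\Phi$, so that $M_{gh,g'h'} = \Phi(E_{gg'})_{hh'}$. The key observation is that each of the four defining conditions of a $\K$-isomorphism matrix, Equations~\eqref{gsums}--\eqref{mink}, translates into one of the four conditions \eqref{eq:preserving}--\eqref{eq:J} defining a $\K$-isomorphism map, and the translation in each case is supplied by a lemma already proved in the excerpt.

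\textbf{The core dictionary $(1) \Leftrightarrow (2)$.} I would match the conditions one at a time. First, condition \eqref{mink}, that $M \in \K$, is equivalent by Lemma~\ref{lem:Kpreserving} (the equivalence of $(ii)$ and $(iii)$ there) to $\Phi$ being completely $\K$-preserving, which is exactly \eqref{eq:preserving}. Next, the zero pattern \eqref{zeros}, namely $M_{gh,g'h'} = 0$ whenever $\rel(g,g') \ne \rel(h,h')$, I would split according to the value of $\rel$. The equal/not-equal distinction is governed by the identity matrix: applying Lemma~\ref{newlemma} with $X = I$ and $Y = I$ shows that $M_{gh,g'h'} = 0$ whenever $I_{gg'} \ne I_{hh'}$ is equivalent to the intertwining relation \eqref{eq:verts}. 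Similarly, taking $X = A_G$ and $Y = A_H$ in Lemma~\ref{newlemma} handles the adjacency constraint and yields \eqref{eq:edges}. The point I would make carefully is that the three constraints ``$g=g' \Leftrightarrow h=h'$'', ``$g \sim g' \Leftrightarrow h \sim h'$'', and ``distinct-non-adjacent matches distinct-non-adjacent'' are not independent: since $\{I, A_G, J-I-A_G\}$ partitions the all-ones matrix and likewise for $H$, any two of the Schur-intertwining relations force the third, so \eqref{eq:verts} and \eqref{eq:edges} together are exactly equivalent to the full condition \eqref{zeros}. Finally, the row/column sum conditions \eqref{gsums} and \eqref{hsums} must be matched with \eqref{eq:J}. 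Here I would compute directly: $\Phi(J)_{hh'} = \sum_{g,g'} M_{gh,g'h'}$, so $\Phi(J) = J$ is precisely \eqref{hsums}, while $\Phi^\dagger(J)_{gg'} = \sum_{h,h'} M_{gh,g'h'}$, so $\Phi^\dagger(J) = J$ is precisely \eqref{gsums}. This completes $(1) \Leftrightarrow (2)$.

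\textbf{The symmetry $(2) \Leftrightarrow (3)$.} For the last equivalence I would exploit that the Choi matrix of $\Phi^\dagger$ is, up to the transpose-and-reindex operation $M \mapsto \tilde M$ appearing before Lemma~\ref{newlemma}, the same data read from $H$ to $G$. Concretely, the defining conditions \eqref{eq:preserving}--\eqref{eq:J} are manifestly self-dual under $\Phi \leftrightarrow \Phi^\dagger$ together with the swaps $I \leftrightarrow I$, $A_G \leftrightarrow A_H$, $J \leftrightarrow J$: complete $\K$-preservation of $\Phi$ is equivalent to that of $\Phi^\dagger$ by Lemma~\ref{properties}$(i)$ combined with Lemma~\ref{lem:Kpreserving} (one checks the Choi matrix of $\Phi^\dagger$ lies in $\K$ iff that of $\Phi$ does, since $\K$ is closed under the relevant reindexing and conjugate-transpose, as each $\K$ is self-adjoint); the Schur relations \eqref{eq:verts} and \eqref{eq:edges} for $\Phi$ are equivalent to the corresponding relations for $\Phi^\dagger$ by the equivalence of $(1)$ and $(3)$ in Lemma~\ref{newlemma}; and \eqref{eq:J} is symmetric in $\Phi$ and $\Phi^\dagger$ by construction. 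Thus $\Phi$ is a $\K$-isomorphism map from $G$ to $H$ if and only if $\Phi^\dagger$ is one from $H$ to $G$.

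\textbf{Anticipated obstacle.} I expect the only genuinely delicate point to be verifying that the zero-pattern condition \eqref{zeros}, which involves the three-valued function $\rel$, is fully captured by just the two Schur-intertwining relations \eqref{eq:verts} and \eqref{eq:edges} rather than requiring a third relation for the non-adjacency class. The resolution is the partition identity $J = I + A_G + (J - I - A_G)$ on the $G$ side and its analogue on the $H$ side, which makes the third constraint redundant; I would state this explicitly so the reader sees that no information about the distinct-non-adjacent pairs is lost. Everything else is a routine unwinding of the Choi correspondence and direct appeals to Lemmas~\ref{properties}, \ref{lem:Kpreserving}, and~\ref{newlemma}.
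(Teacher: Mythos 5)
Your proposal is correct and follows essentially the same route as the paper: it matches condition \eqref{mink} with \eqref{eq:preserving} via Lemma~\ref{lem:Kpreserving}, splits the zero pattern \eqref{zeros} into the two Schur-intertwining relations via Lemma~\ref{newlemma} applied to $(I,I)$ and $(A_G,A_H)$ (your explicit remark that the partition $J=I+A_G+A_{\overline G}$ makes a third relation redundant is exactly the point the paper handles implicitly), identifies \eqref{gsums}--\eqref{hsums} with \eqref{eq:J} by direct computation, and obtains $(2)\Leftrightarrow(3)$ from the $(1)\Leftrightarrow(3)$ symmetry of Lemma~\ref{newlemma}. The only cosmetic slip is citing Lemma~\ref{properties}$(i)$ (which concerns completely positive maps, i.e.\ $\K=\psd$) for the complete $\K$-preservation of $\Phi^\dagger$ in general, but your parenthetical argument via the reindexed Choi matrix of $\Phi^\dagger$ already covers all four cones.
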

\begin{proof}$(1)\iff (2)$. 
Let $\Phi: \mathbb{C}^{V_G \times V_G} \to \mathbb{C}^{V_H \times V_H}$ be a linear map and $M$ its Choi matrix.  Consider the matrix $\tilde{M}$ with columns indexed by $V_G \times V_G$ and rows indexed by $V_H \times V_H$ defined as $\tilde{M}_{hh',gg'} = M_{gh,g'h'}.$
By Lemma~\ref{lem:Kpreserving}, we have that $M \in \K$ if and only if $\Phi$ is completely $\K$-preserving, i.e.,
\begin{equation}\label{Kpres}
\eqref{mink} \Leftrightarrow \eqref{eq:preserving}.
\end{equation}
Also, Conditions~\eqref{gsums} and~\eqref{hsums} are equivalent to $\tilde{M}$ having all row and column sums equal to 1 respectively, which in turn are equivalent to $\Phi(J) = J$ and $\Phi^\dag(J) = J$ respectively. Thus
\begin{equation}\label{Jsums}
\eqref{gsums} \ \& \ \eqref{hsums} \Leftrightarrow \eqref{eq:J}.
\end{equation}

Lastly, we  show that Condition~\eqref{zeros} holding for $M$ is equivalent to Conditions~\eqref{eq:verts} and  \eqref{eq:edges} holding for $\Phi$. Indeed, by Lemma \ref{newlemma}  we have that 
$ \Phi(A_G \schur X) = A_H \schur \Phi(X) \text{ for all } X \in \mathbb{C}^{V_G \times V_G}$ is equivalent to 
$M_{gh,g'h'} = 0$ whenever $(A_G)_{gg'} \ne (A_H)_{hh'}$, i.e., whenever ($g \sim g'$ \& $h \not\sim h'$) or ($g \not\sim g'$ \& $h \sim h'$). Similarly Lemma \ref{newlemma}  implies  that $\Phi(I \schur X) = I \schur \Phi(X)$ is equivalent to $M_{gh,g'h'} = 0$ whenever ($g = g'$ \& $h \ne h'$) or ($g \ne g'$ \& $h = h'$).
Summarizing, we have that Conditions~\eqref{eq:verts} and  \eqref{eq:edges} holding for $\Phi$ is equivalent to $M_{gh,g'h'} = 0$ whenever $\rel(g,g') \ne \rel(h,h')$, i.e.,
\begin{equation}\label{schurzeros}
\big(\eqref{eq:verts} \ \& \ \eqref{eq:edges}\big) \Leftrightarrow \eqref{zeros}.
\end{equation}
Combining the equivalences in~\eqref{Kpres}, \eqref{Jsums}, and~\eqref{schurzeros} yields the theorem.

$(2) \iff (3)$. Follows immediately by Lemma \ref{newlemma}. 
\end{proof}

\begin{remark}\label{isomapsremark}
We conclude this section by listing some further useful properties satisfied by isomorphism maps.
First,
note that~(\ref{eq:verts}) and~(\ref{eq:edges}) further imply that
\begin{equation}
\Phi(A_{\overline{G}} \schur X) = A_{\overline{H}} \schur \Phi(X) \text{ for all } X \in \mathbb{C}^{V_G \times V_G}, \label{eq:nonedges}
\end{equation}
since $A_{\overline{G}} = J - I - A_G$ and $J$ is the identity with respect to the   Schur product. 
Furthermore,  since $\Phi(J)=J=\Phi^\dag(J)$ it follows respectively by \eqref{eq:verts} and \eqref{eq:edges} that: 
$$\Phi(I)=I \text{ and } \Phi(A_G)=A_H.$$
Lastly, the fact that $\Phi$ is sum-preserving combined with $\Phi(I)=I$ shows that $G$ and $H$ have the same number of vertices. Analogously,  $\Phi(A_G)=A_H$ implies that $G$ and $H$ have the same number of edges. 
\end{remark}

\subsection{Some additional properties of isomorphism maps}

\begin{lemma}\label{lem:XYcommute} 
Consider a linear map  $\Phi: \C^{n\times n}\to \C^{n\times n} $ which is    completely positive, trace-preserving, and unital.  Then, for any pair of   matrices $X,Y$ such that $\Phi(X)=Y$ and $\Phi^\dag(Y)=X$ we have that
$$\Phi(XW) = \Phi(X)\Phi(W) \ \text{ and } \ \Phi(WX) = \Phi(W)\Phi(X), \text{ for all  matrices } W.$$
\end{lemma}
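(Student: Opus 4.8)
The plan is to show that $X$ lies in the \emph{multiplicative domain} of $\Phi$ — that is, to establish the two operator equalities $\Phi(X^\dagger X) = \Phi(X)^\dagger\Phi(X)$ and $\Phi(XX^\dagger) = \Phi(X)\Phi(X)^\dagger$ — and then invoke the classical fact that multiplicativity against arbitrary inputs follows from membership in this domain. Since $\Phi$ is completely positive (hence $2$-positive) and unital, the Kadison--Schwarz inequality applies: $\Phi(A^\dagger A) - \Phi(A)^\dagger\Phi(A)$ is positive semidefinite for every $A$. Applying this with $A = X$ and with $A = X^\dagger$, and using $\Phi(X^\dagger) = \Phi(X)^\dagger$ from Lemma~\ref{properties}(iv) together with $\Phi(X) = Y$, shows that both $\Phi(X^\dagger X) - Y^\dagger Y$ and $\Phi(XX^\dagger) - YY^\dagger$ are positive semidefinite.

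The crux is to upgrade these two inequalities to equalities, which I would do by a trace computation. Because $\Phi$ is trace-preserving, $\tr\Phi(X^\dagger X) = \tr(X^\dagger X)$ and $\tr\Phi(XX^\dagger) = \tr(XX^\dagger) = \tr(X^\dagger X)$. The key observation is that $\tr(X^\dagger X) = \tr(Y^\dagger Y)$: writing $\langle A,B\rangle = \tr(A^\dagger B)$ for the Hilbert--Schmidt inner product and using the defining adjoint relation $\langle \Phi(A),B\rangle = \langle A,\Phi^\dagger(B)\rangle$, one computes $\tr(X^\dagger X) = \langle X,\Phi^\dagger(Y)\rangle = \langle \Phi(X),Y\rangle = \tr(Y^\dagger Y)$, where the outer equalities use precisely the hypotheses $X = \Phi^\dagger(Y)$ and $\Phi(X) = Y$. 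It follows that $\tr\big(\Phi(X^\dagger X) - Y^\dagger Y\big) = 0$ and $\tr\big(\Phi(XX^\dagger) - YY^\dagger\big) = 0$. Since a positive semidefinite matrix with zero trace vanishes, both Schwarz inequalities are in fact equalities, so $X$ belongs to the multiplicative domain.

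Finally, to pass from these equalities to $\Phi(XW) = \Phi(X)\Phi(W)$ and $\Phi(WX) = \Phi(W)\Phi(X)$ for all $W$, I would use a minimal Stinespring dilation $\Phi(Z) = V^\dagger\pi(Z)V$ with $\pi$ a $*$-homomorphism and $V$ an isometry, the isometry property $V^\dagger V = I$ coming from unitality. Expanding $\Phi(X^\dagger X) - \Phi(X)^\dagger\Phi(X) = V^\dagger\pi(X)^\dagger(I - VV^\dagger)\pi(X)V$ and using that it vanishes (together with the fact that $I - VV^\dagger$ is a projection) forces $(I - VV^\dagger)\pi(X)V = 0$, i.e.\ $\pi(X)V = V\Phi(X)$; the companion equality for $XX^\dagger$ forces $V^\dagger\pi(X) = \Phi(X)V^\dagger$. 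Then $\Phi(XW) = V^\dagger\pi(X)\pi(W)V = \Phi(X)V^\dagger\pi(W)V = \Phi(X)\Phi(W)$, and symmetrically $\Phi(WX) = V^\dagger\pi(W)\pi(X)V = \Phi(W)\Phi(X)$, which is the desired conclusion.

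I expect the main obstacle to be the upgrade from inequality to equality in the second paragraph; the rest is either the textbook Kadison--Schwarz/multiplicative-domain machinery or routine bookkeeping. The entire argument pivots on the norm identity $\tr(X^\dagger X) = \tr(Y^\dagger Y)$, and this is exactly the point where the hypothesis $\Phi^\dagger(Y) = X$ is indispensable (mere knowledge of $\Phi(X) = Y$ would not suffice), used in tandem with trace-preservation.
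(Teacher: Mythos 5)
Your proof is correct. It is, at bottom, the same mechanism as the paper's, but routed through standard named machinery rather than a bare-hands computation: the paper fixes a Kraus decomposition $\Phi(Z)=\sum_i K_iZK_i^\dagger$ and shows directly that $\mathcal{Z}=\sum_i(K_iX-YK_i)(K_iX-YK_i)^\dagger$ equals $\Phi(XX^\dagger)-YY^\dagger$ --- which is precisely your Kadison--Schwarz defect, so the paper is in effect reproving that inequality in Kraus coordinates --- and then derives the intertwining relations $K_iX=YK_i$, $XK_i^\dagger=K_i^\dagger Y$, which play the role of your $\pi(X)V=V\Phi(X)$ and $V^\dagger\pi(X)=\Phi(X)V^\dagger$ from the Stinespring dilation. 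The one place where your argument is genuinely cleaner is the trace step: you get $\tr(X^\dagger X)=\langle X,\Phi^\dagger(Y)\rangle=\langle\Phi(X),Y\rangle=\tr(Y^\dagger Y)$ in a single line from the Hilbert--Schmidt adjoint pairing, whereas the paper obtains $\tr(XX^\dagger-YY^\dagger)=0$ by running the positivity argument twice (once for $\Phi$, once for $\Phi^\dagger$, the latter being trace-preserving because $\Phi$ is unital) and sandwiching. Both routes use the hypothesis $\Phi^\dagger(Y)=X$ at exactly this point, and both are fine; yours trades self-containedness (you import Kadison--Schwarz, the multiplicative-domain theorem, and Stinespring) for brevity, while the paper's Kraus computation is elementary and matches the toolkit it sets up in Lemma~\ref{properties}.
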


\begin{proof}The presented proof is a small modification of the arguments in \cite{watrous}. 
As $\Phi$ is completely positive it admits a 
  a Kraus decomposition  $\Phi(Z)=\sum_{i=1}^mK_i ZK_i^\dagger$. The crux of the proof is to  show that 
\be\label{cvevv}
K_i X = YK_i\  \text{ and }\  XK_i^\dagger = K_i^\dagger Y \text{ for all }   i \in [m].
\ee 
For this, set  $\mathcal{Z} = \sum_i (K_i X - YK_i)(K_i X - YK_i)^\dagger$ we have 
\begin{align*}
\mathcal{Z} &= \sum_i (K_i X - YK_i)(X^\dagger K_i^\dagger - K_i^\dagger Y^\dagger) \\
&= \sum_i  K_i X X^\dagger K_i^\dagger -  \sum_i K_i X K_i^\dagger Y^\dagger - \sum_i YK_i X^\dagger K_i^\dagger +  \sum_iYK_i K_i^\dagger Y^\dagger \\
&=  \Phi(XX^\dagger) - \Phi(X) Y^\dagger - Y\Phi(X^\dagger)  + Y Y^\dagger  \\
&= \Phi(XX^\dagger) - YY^\dagger,
\end{align*}
 where to get the last equality we used the assumption  $\Phi(X)=Y$ and that $\Phi(X^\dagger)=Y^\dagger$, the latter  following  by  Lemma~\ref{properties} $(iv)$. 
 As $\mathcal{Z}$ is psd (since it is the sum of psd matrices) 
 we have that 
 \be\label{scsdvf}
 0 \le \tr(\mathcal{Z}) = \tr\left(\Phi(XX^\dagger) - YY^\dagger\right)=\tr(XX^\dagger - YY^\dagger),
 \ee
 where for the last equality we used that $\tr(\Phi(XX^\dagger)) =\tr(XX^\dagger) $ since  $\Phi$ is trace preserving. 
 
 By assumption we also have that $\Phi^\dag(Y) = X$, and $\Phi^\dag$  is trace-preserving as $\Phi$ is assumed to be unital.  In a similar manner as above we  get that  
  \be\label{ererg}
  \tr(\mathcal{Z})=\tr(YY^\dagger - XX^\dagger) \ge 0.
  \ee Combining \eqref{scsdvf} and \eqref{ererg} we get  that $\tr(\mathcal{Z}) = \tr(XX^\dagger - YY^\dagger) = 0$, and as  $\mathcal{Z}$ is psd, this further implies   that $\mathcal{Z} = 0$. As $\mathcal{Z}$ is the sum of psd matrices,  every term in the sum $\mathcal{Z}=\sum_i (K_i X - YK_i)(K_i X - YK_i)^\dagger$ is equal to zero, which  in turn implies that $K_i X - YK_i = 0$ for all $i$, i.e., that $K_i X = YK_i$.

Lastly, using that   $K_i X = YK_i$ for all $i$ we have that for any matrix $W$:
\[\Phi(XW) = \sum_i K_i XW K_i^\dagger = \sum_i YK_i W K_i^\dagger = Y\Phi(W).\]

Lastly, repeating the above argument  with the  matrix $\sum_i (X K_i^\dagger - K_i^\dagger Y)^\dagger(X K_i^\dagger - K_i^\dagger Y)$ it follows  that $XK_i^\dagger = K_i^\dagger Y$ for all $i$, and thus,  $\Phi(WX) = \Phi(W)Y$. 
\end{proof}

\begin{lemma}\label{lem:cospectral}
Consider a linear map  $\Phi: \C^{n\times n}\to \C^{n\times n} $ which is   completely positive, trace-preserving, and unital. Then, for any pair of   Hermitian matrices $X,Y$ such that $\Phi(X)=Y$ and $\Phi^\dag(Y)=X$ we have that $X$ and $Y$ 
are cospectral, and furthermore,  if $E_\lambda$ and $F_\lambda$ are projections onto the $\lambda$-eigenspaces of $X$ and $Y$ respectively, then $\Phi(E_\lambda) = F_\lambda$ and $\Phi^\dag(F_\lambda) = E_\lambda$.
\end{lemma}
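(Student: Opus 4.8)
The plan is to leverage Lemma~\ref{lem:XYcommute}, which under exactly these hypotheses ($\Phi$ completely positive, trace-preserving, unital, with $\Phi(X)=Y$ and $\Phi^\dag(Y)=X$) gives the intertwining relations $\Phi(XW)=\Phi(X)\Phi(W)=Y\Phi(W)$ and $\Phi(WX)=\Phi(W)Y$ for all $W$. The essential observation is that these relations allow $\Phi$ to commute with polynomials in $X$: applying them inductively yields $\Phi(X^k W)=Y^k\Phi(W)$ and $\Phi(WX^k)=\Phi(W)Y^k$, and in particular, setting $W=I$ and using unitality $\Phi(I)=I$, we get $\Phi(X^k)=Y^k$ for every $k\ge 0$.

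First I would establish cospectrality via traces. Since $\Phi$ is trace-preserving, $\tr(Y^k)=\tr(\Phi(X^k))=\tr(X^k)$ for all $k\ge 0$. Equal power-sum traces for all $k$ force $X$ and $Y$ to have the same characteristic polynomial (by Newton's identities, the power sums determine the elementary symmetric functions of the eigenvalues), hence the same multiset of eigenvalues. This handles the cospectrality claim cleanly and uses only that $\Phi(X^k)=Y^k$ together with trace-preservation.

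Next, for the eigenprojections, I would exploit that $E_\lambda$ is a polynomial in $X$: since $X$ is Hermitian with spectral decomposition $X=\sum_\mu \mu E_\mu$, each spectral projection $E_\lambda$ can be written as $E_\lambda=p_\lambda(X)$ where $p_\lambda$ is the Lagrange interpolation polynomial that takes value $1$ at $\lambda$ and $0$ at every other eigenvalue of $X$. Because $X$ and $Y$ are cospectral, the \emph{same} polynomial $p_\lambda$ applied to $Y$ produces $F_\lambda=p_\lambda(Y)$, the projection onto the $\lambda$-eigenspace of $Y$. Then $\Phi(E_\lambda)=\Phi(p_\lambda(X))=p_\lambda(Y)=F_\lambda$, where the middle equality follows from $\Phi(X^k)=Y^k$ extended linearly over the polynomial $p_\lambda$.

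The symmetric statement $\Phi^\dag(F_\lambda)=E_\lambda$ follows by the same argument with the roles of $\Phi$ and $\Phi^\dag$ interchanged: $\Phi^\dag$ is itself completely positive (Lemma~\ref{properties}(i)), it is trace-preserving because $\Phi$ is unital (Lemma~\ref{properties}(iii)), and it is unital because $\Phi$ is trace-preserving (the same lemma), and we are given $\Phi^\dag(Y)=X$ and $\Phi(X)=Y$, so Lemma~\ref{lem:XYcommute} applies to $\Phi^\dag$ with the roles of $X,Y$ swapped. I expect the main subtlety to be the justification that cospectrality guarantees the interpolation polynomial $p_\lambda$ simultaneously cuts out the correct projection for both $X$ and $Y$; this is where the equal-eigenvalue-multiset conclusion from the trace step is genuinely used, and one should note that $p_\lambda$ must interpolate over the common eigenvalue set so that $p_\lambda(X)=E_\lambda$ and $p_\lambda(Y)=F_\lambda$ hold with the identical polynomial.
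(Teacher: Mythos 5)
Your proposal is correct, and while it rests on the same engine as the paper's proof---Lemma~\ref{lem:XYcommute} giving $\Phi(X^kW)=Y^k\Phi(W)$ and hence $\Phi(f(X))=f(Y)$ for polynomials $f$---you extract the two conclusions by a genuinely different route. The paper proves cospectrality and the projection identity simultaneously: it observes that $\Phi(E_\lambda)=\Phi(E_\lambda^2)=\Phi(E_\lambda)^2$ and $\Phi(E_\lambda)^\dagger=\Phi(E_\lambda)$, so $\Phi(E_\lambda)$ is an orthogonal projector whose trace (hence rank) equals that of $E_\lambda$ by trace preservation, and whose range lies in the $\lambda$-eigenspace of $Y$ because $Y\Phi(E_\lambda)=\Phi(XE_\lambda)=\lambda\Phi(E_\lambda)$; this yields $\mathrm{mult}(Y,\lambda)\ge\mathrm{mult}(X,\lambda)$, equality by the symmetric argument with $\Phi^\dagger$, and then $\Phi(E_\lambda)=F_\lambda$ by comparing dimensions. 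You instead settle cospectrality first via $\tr(Y^k)=\tr(\Phi(X^k))=\tr(X^k)$ and Newton's identities, and only then use the shared spectrum to argue that the single interpolation polynomial $p_\lambda$ satisfies $p_\lambda(X)=E_\lambda$ and $p_\lambda(Y)=F_\lambda$, so that $\Phi(E_\lambda)=p_\lambda(Y)=F_\lambda$. Both arguments are sound; your trace computation is the more elementary path to cospectrality, whereas the paper's projector argument delivers the extra fact that $\Phi(E_\lambda)$ is an orthogonal projector without presupposing that $X$ and $Y$ share a spectrum. Your treatment of the adjoint direction is also correct: Lemma~\ref{properties} gives that $\Phi^\dagger$ is completely positive, that it is trace-preserving because $\Phi$ is unital, and unital because $\Phi$ is trace-preserving, so Lemma~\ref{lem:XYcommute} applies with the roles of $X$ and $Y$ exchanged, exactly as the paper's ``symmetric argument'' requires.
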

\proof
By Lemma~\ref{lem:XYcommute}, we have that $\Phi(XW) = Y\Phi(W)$ for any matrix $W$ and therefore $\Phi(f(X)) = f(Y)$ for any polynomial $f$. Let $\lambda$ be an eigenvalue for $X$ and let $E_\lambda$ be the corresponding orthogonal projector. 
Then, we have that 
\be\label{dvadvfgb}
\Phi(E_\lambda)= \Phi(E_\lambda^2)=\Phi(E_\lambda)^2,
\ee
where the first equality follows as $E_\lambda$ is a projector and the second one as 
$E_\lambda$ can be written as a polynomial in $X$, concretely $E_\lambda=\prod_{\tau\ne \lambda}{(X-\tau I)\over \lambda -\tau}$. 
Consequently,   $\Phi(E_\lambda)$ is an orthogonal projector since it is idempotent by   \eqref{dvadvfgb}, and  Hermitian since $\Phi(E_\lambda)^\dag=\Phi(E_\lambda^\dag)=\Phi(E_\lambda)$.  Furthermore, since $\Phi$ is trace-preserving and the rank of a projector is equal to its trace, the rank of $\Phi(E_\lambda)$ is equal to that of $E_\lambda$.~Furthermore, 
\[Y\Phi(E_\lambda) =\Phi(X)\Phi(E_\lambda)=  \Phi(X E_\lambda) = \Phi(\lambda E_\lambda) = \lambda \Phi(E_\lambda),\]
which means that the range of $\Phi(E_\lambda)$ is  contained on the $\lambda$-eigenspace of $Y$. Summarizing we showed that if $\lambda$ is an eigenvalue for $X$ then it also an eigenvalue for $Y$ and furthermore ${\rm mult}(Y,\lambda)\ge {\rm mult}(X,\lambda)$.  The symmetric argument shows that $X$ and $Y$ have the same multiset of eigenvalues, i.e., they are cospectral. 
Lastly, combining the inclusion  ${\rm range}(\Phi(E_\lambda))\subseteq {\rm Ker}(Y-\lambda I)$ with the fact that both  subspaces have the same dimension, it follows that $\Phi(E_\lambda) = F_\lambda$ and similarly $\Phi^\dagger(F_\lambda) = E_ \lambda$.\qeds

\section{Characterizing  $\psd $-isomorphic graphs}\label{sec:characterization}
\subsection{Partially coherent algebras} 

Suppose that $S$ is some subset of $\C^{n \times n}$. We say that an algebra $\A$ is an \emph{$S$-partially coherent algebra} if $\A$
\begin{enumerate}
\item is unital;
\item is self-adjoint;
\item contains the all ones matrix;
\item is closed under Schur multiplication by any matrix in $S$.
\end{enumerate}

Note that the last two properties above imply that any $S$-partially coherent algebra must contain every element of $S$. On the other hand, any coherent algebra containing $S$ is $S$-partially coherent. The smallest example that we know of an $S$-partially coherent algebra that is not a coherent algebra is the algebra of polynomials in the adjacency matrix of the Hoffman graph. We have verified by computer that this algebra is not a coherent algebra, but is an $S$-partially coherent algebra for $S = \{I,A\}$ where $A$ is the adjacency matrix of the Hoffman graph which we will see in Section~\ref{sec:separations}.

As with coherent algebras, it is easy to see that the intersection of two $S$-partially coherent algebras is an $S$-partially coherent algebra. Therefore, there is some minimal $S$-partially coherent algebra for any $S$. This will be equal to the set of matrices that can be expressed using the elements of $S \cup \{I,J\}$ and a finite number of the operations of addition, scalar multiplication, matrix multiplication, conjugate transposition, and Schur multiplication where at least one of the factors is an element of $S$. 

We define the \emph{partially coherent algebra of a graph $G$}, denoted $\PA_G$, to be the minimal $S$-partially coherent algebra where $S = \{I, A_G\}$. Note that this will also be $S'$-partially coherent for $S' = \{I, A_G, A_{\overline{G}}\}$ since $A_{\overline{G}} = J - I - A_G$ and $J$ is the Schur identity. 

\begin{definition}
Let $G$ and $H$ be graphs with adjacency matrices $A_G$ and $A_H$ and partially coherent algebras $\PA_G$ and $\PA_H$ respectively. We say that $G$ and $H$ are \emph{partially equivalent} if there exists an linear bijection $\phi: \PA_G \to \PA_H$ such that
\begin{enumerate}
\item $\phi(M^\dagger) = \phi(M)^\dagger$ for all $M \in \PA_G$;
\item $\phi(MN) = \phi(M)\phi(N)$ for all $M,N \in \PA_G$;
\item $\phi(I) = I$, $\phi(A_G) = A_H$, and $\phi(J) = J$;
\item $\phi(M\schur N) = \phi(M)\schur\phi(N)$ for all $M \in \{I,A_G\}$ and $N \in \PA_G$.
\end{enumerate}
We refer to $\phi$ as a \emph{partial equivalence} of $G$ and $H$.
\end{definition}

Note that the conditions above imply that $\phi(A_{\overline{G}}) = A_{\overline{H}}$ where $A_{\overline{G}}$ and $A_{\overline{H}}$ are the adjacency matrices of the complements of $G$ and $H$ respectively. Furthermore, they also imply that $\phi(A_{\overline{G}} \schur N) = A_{\overline{H}} \schur \phi(N)$ for all $N \in \PA_G$. Note that if it exists, a partial equivalence $\phi$ of graphs $G$ and $H$ is completely determined since $\phi(A_G) = A_H$.

If $\phi$ is an equivalence of graphs $G$ and $H$, then any function of $I, A_G$, and $J$ using the operations of addition, scalar multiplication, matrix multiplication, entrywise multiplication, and conjugate transposition is mapped to the same function with $A_G$ replaced by $A_H$. This obviously still holds if we restrict to functions in which entrywise multiplication can only be used when one of the factors is $I$ or $A_G$. Since the space of matrices that can be written as such functions is exactly the partially coherent algebra of $G$, we have that the restriction of the equivalence $\phi$ to $\PA_G$ is a partial equivalence of $G$ and $H$. Thus, any pair of equivalent graphs are also partially equivalent, as one would expect.

\subsection{The characterization}

\begin{theorem}\label{thm:psdiso}
Two graphs $G$ and $H$ are partially equivalent if and only if   $G \cong_\psd H$. 
\end{theorem}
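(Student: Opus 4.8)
The plan is to work entirely on the level of linear maps, using Theorem~\ref{thm:isomap} as the dictionary between $\psd$-isomorphism matrices and completely positive maps. Recall that a $\psd$-isomorphism matrix for $G$ to $H$ is exactly the Choi matrix of a map $\Phi:\C^{V_G\times V_G}\to\C^{V_H\times V_H}$ satisfying \eqref{eq:preserving}--\eqref{eq:J}, and that by Theorem~\ref{thm:isomap}$(3)$ its adjoint $\Phi^\dagger$ is then a $\psd$-isomorphism map from $H$ to $G$. Since \eqref{eq:J} together with \eqref{eq:verts} forces $\Phi(I)=\Phi(I\schur J)=I\schur\Phi(J)=I$ and likewise $\Phi^\dagger(I)=I$, such a $\Phi$ is automatically completely positive, unital and trace-preserving, with $\Phi(I)=I$, $\Phi(A_G)=A_H$, $\Phi(J)=J$ and the symmetric relations for $\Phi^\dagger$ (Remark~\ref{isomapsremark}). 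The whole proof then splits into showing that $\Phi$ restricts to a partial equivalence $\phi:=\Phi|_{\PA_G}$, and conversely that any partial equivalence extends to such a map.

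For the direction $G\cong_\psd H\Rightarrow$ partial equivalence, the crux is upgrading the single-factor multiplicativity of Lemma~\ref{lem:XYcommute} to genuine multiplicativity of $\Phi$ on all of $\PA_G$. To this end I would study the fixed set $\mathcal{D}=\{X:\Phi^\dagger\Phi(X)=X\}$ and prove $\PA_G\subseteq\mathcal{D}$. It is a linear subspace containing the generators (e.g. $\Phi^\dagger\Phi(A_G)=\Phi^\dagger(A_H)=A_G$, and similarly for $I,J$); it is closed under $\dagger$ by Lemma~\ref{properties}$(iv)$; and it is closed under Schur multiplication by $I$ and by $A_G$, since both $\Phi$ and $\Phi^\dagger$ intertwine these Schur products by \eqref{eq:verts}--\eqref{eq:edges}. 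The one delicate closure property is under matrix multiplication: for $X,X'\in\mathcal{D}$ I would first apply Lemma~\ref{lem:XYcommute} to $\Phi$ with the pair $X,\Phi(X)$ to get $\Phi(XX')=\Phi(X)\Phi(X')$, then apply Lemma~\ref{lem:XYcommute} to $\Phi^\dagger$ with the pair $\Phi(X),X$ (legitimate because $\Phi^\dagger$ is also completely positive, unital and trace-preserving) to peel off the factor $\Phi(X)$ and conclude $\Phi^\dagger\Phi(XX')=X\,\Phi^\dagger\Phi(X')=XX'$. As $\mathcal{D}$ contains $I,A_G,J$ and is closed under all the operations defining $\PA_G$, we obtain $\PA_G\subseteq\mathcal{D}$, so Lemma~\ref{lem:XYcommute} makes $\Phi$ fully multiplicative on $\PA_G$. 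A routine induction on the word-length of an element of $\PA_G$ (using $\Phi(I)=I$, $\Phi(A_G)=A_H$, $\Phi(J)=J$ and that $\Phi$ intertwines $\dagger$, products, and Schur-by-$I$/$A_G$) then gives $\Phi(\PA_G)\subseteq\PA_H$; symmetrically $\Phi^\dagger(\PA_H)\subseteq\PA_G$, and $\Phi^\dagger\Phi=\mathrm{id}$ on $\PA_G$ makes $\phi:=\Phi|_{\PA_G}$ a bijection onto $\PA_H$ satisfying conditions $(1)$--$(4)$.

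For the converse I would build a $\psd$-isomorphism map by composing $\phi$ with a conditional expectation. Let $\E_G:\C^{V_G\times V_G}\to\PA_G$ be the orthogonal projection onto $\PA_G$ in the Hilbert--Schmidt inner product; since $\PA_G$ is a unital self-adjoint subalgebra, $\E_G$ is completely positive, unital, self-adjoint and trace-preserving, and I set $\Phi:=\phi\circ\E_G$. Because $\phi$ is a unital $*$-isomorphism of finite-dimensional C$^*$-algebras it is completely positive, so $\Phi$ is completely positive, i.e. completely $\psd$-preserving. As $J\in\PA_G$ and $\E_G$ fixes $\PA_G$, we get $\Phi(J)=\phi(J)=J$. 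For the adjoint normalization I would first show $\phi$ is sum-preserving: writing $s(M)$ for the sum of entries, the identity $JMJ=s(M)J$ together with multiplicativity and $\phi(J)=J$ gives $s(\phi(M))\,J=\phi(JMJ)=s(M)\,J$, so $s(\phi(M))=s(M)$; computing $\Phi^\dagger=\phi^{*}\circ\E_H$ (with $\phi^*$ the Hilbert--Schmidt adjoint of $\phi$) then yields $\Phi^\dagger(J)=\phi^{*}(J)=J$. Finally, the operators $X\mapsto I\schur X$ and $X\mapsto A_G\schur X$ are self-adjoint and preserve $\PA_G$ (closure under these Schur products), hence preserve $\PA_G^\perp$ and commute with $\E_G$; combined with condition $(4)$ this gives $\Phi(I\schur X)=I\schur\Phi(X)$ and $\Phi(A_G\schur X)=A_H\schur\Phi(X)$. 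By Theorem~\ref{thm:isomap} the Choi matrix of $\Phi$ is a $\psd$-isomorphism matrix, so $G\cong_\psd H$.

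The main obstacle is two-fold, lying in the two bootstrapping steps. In the forward direction it is the passage from the restricted multiplicativity of Lemma~\ref{lem:XYcommute} to full multiplicativity on $\PA_G$; the fixed set $\mathcal{D}$ is the right device, and its only non-obvious closure property is under matrix multiplication, which forces one to play $\Phi$ and $\Phi^\dagger$ off against each other. In the reverse direction it is producing a \emph{completely positive, globally defined} extension of $\phi$: the conditional expectation $\E_G$ furnishes one for free, but one must verify that composing with it preserves both the Schur-intertwining relations and the normalization $\Phi^\dagger(J)=J$, which is precisely where closure of $\PA_G$ under Schur-by-$I$/$A_G$ and the sum-preservation of $\phi$ are used. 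Everything else is bookkeeping with Theorem~\ref{thm:isomap} and Lemmas~\ref{properties} and~\ref{lem:XYcommute}.
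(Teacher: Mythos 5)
Your proof is correct and follows essentially the same route as the paper: Theorem~\ref{thm:isomap} together with Lemma~\ref{lem:XYcommute} and the Schur-intertwining relations in the forward direction, and composition of the partial equivalence with the orthogonal projection onto $\PA_G$ in the converse. The only local differences are that you make explicit, via the fixed set $\mathcal{D}=\{X:\Phi^\dagger\Phi(X)=X\}$, the induction the paper leaves implicit when it asserts that every finite expression in $I,A_G,J$ is mapped to the corresponding expression with $A_G$ replaced by $A_H$, and that in the converse you bypass the unitary realization of $\phi$ (Lemmas~\ref{lem:unitary} and~\ref{pequiv_trace_pres}) by invoking the standard fact that a unital $*$-isomorphism of finite-dimensional C$^*$-algebras is completely positive, verifying $\Phi^\dagger(J)=J$ through sum-preservation of $\phi$ instead of through $\Phi^\dagger=\Pi\comp\hphi^{-1}$.
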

\proof

Assume  that $G \cong_\psd H$ and  let $M$ 
be a $\psd$-isomorphism matrix.  Let $\Phi:~\C^{V_G \times V_G} \to \C^{V_H \times V_H}$ be the linear map whose Choi matrix is $M$
As $M$  is a $\psd$-isomorphism matrix it follows by 
 Theorem \ref{thm:isomap}   that $\Phi$ is  a $\psd$-isomorphism map, i.e., it satisfies Conditions \eqref{eq:preserving}-\eqref{eq:J}. Furthermore, as 
  as already noted in Remark \ref{isomapsremark},  the above properties also  imply that 
 $\Phi(I) = I$, $ \Phi(A_G) = A_H $ and that  $|V_G|=|V_H|=:n$. 
 Additionally, by Theorem~\ref{thm:isomap} the adjoint map $\Phi^\dagger$ is a $\K$-isomorphism map from $H$ to $G$, i.e., it satisfies:
\be\label{daggereq}
 \Phi^\dagger(I \schur X) = I \schur \Phi^\dagger(X)\   \text{ and } \ 
\Phi^\dagger(A_H \schur X) = A_G \schur \Phi^\dagger(X), \text{ for all } X \in \C^{n\times n}. 
\ee
%
%
  Now, \eqref{daggereq} combined with $\Phi^\dagger(J)=J$ imply that 
  $ \Phi^\dag(I) = I $ and  $\Phi^\dag(A_H) = A_G.$
 
  Summarizing, we have determined that  $\Phi_M$ is completely positive, unital, trace-preserving, and    
$$  \Phi(I)=I=\Phi^\dag(I), \quad  \Phi(J)=J=\Phi^\dag(J), \quad 
    \Phi(A_G)=A_H, \quad \Phi^\dag(A_H)=A_G.$$
Consequently, Lemma~\ref{lem:XYcommute} implies that for any $ W\in \C^{n\times n}$ we have that  $\Phi(XW) = \Phi(X)\Phi(W) $  and $\Phi(WX)=\Phi(W)\Phi(X)$ whenever $X\in \{I,J,A_G\}$. 
 Furthermore, by Lemma~\ref{newlemma} and Condition~\eqref{zeros} of isomorphism matrices, we have that for any $W \in \mathbb{C}^{n \times n}$,
 \[\Phi(I \schur W) = I \schur \Phi(W), \ \Phi(A_G \schur W) = A_H \schur \Phi(W), \ \Phi(A_{\overline{G}} \schur W) = A_{\overline{H}} \schur \Phi(W),\]
and similarly for $\Phi^\dag$.
 Consequently,  any finite expression involving $I, A_G, A_{\overline{G}}$ and the operations of addition, scalar multiplication, matrix multiplication, and Schur multiplication where at least one of the factors is $I$ or $A_G$, will be mapped by $\Phi$ to the same expression with $A_G$ and $A_{\overline{G}}$ replaced with $A_H$ and $A_{\overline{H}}$ respectively. Further, $\Phi^\dag$ is the inverse of $\Phi$ on such expressions. This means that the restriction of $\Phi$ to the partially coherent algebra of $G$ is a partial equivalence. 
%

Conversely, suppose that $\phi: \PA_G \to \PA_H$ is a partial equivalence of $G$ and $H$. By Lemma~\ref{lem:unitary}, there exists a unitary matrix $U$ such that $\phi(X) = UXU^\dagger$ for all $X \in \PA_G$. Let $\hphi: \C^{V_G \times V_G} \to \C^{V_H \times V_H}$ be defined as $\hphi(X) = UXU^\dagger$ for all $X \in \C^{V_G \times V_G}$. Obviously, $\phi$ and $\hphi$ agree on $\PA_G$. Also, $\hphi$ is a CPTP unital map with adjoint $\hphi^\dag(X) = \hphi^{-1}(X) = U^\dagger X U$. 
Let $\Pi: \C^{V_G \times V_G} \to \PA_G$ be the orthogonal projection onto the partially coherent algebra of $G$ and define the composition 
\[\Phi = \hphi \comp \Pi: \C^{V_G \times V_G} \to \C^{V_H \times V_H}.\]
Since $\Pi$ is a CPTP unital map by Lemma~\ref{lem:algproj},  we have that $\Phi$ is the composition of two CPTP unital maps and is thus CPTP and unital itself. We  show that $\Phi$ is a $\psd$-isomorphism map for $G$ to $H$, and thus, Theorem \ref{thm:isomap} implies that its Choi matrix is a $\psd$-isomorphism matrix.  

We already have established that $\Phi$ is completely $\psd$-preserving, i.e.,  completely positive.
Since $J \in \PA_G$, we have $\Pi(J) = J$. Also, since $\phi$ is a  partial equivalence it satisfies  $\phi(J)=J$, and consequently  $\hphi(J)=\phi(J)= J$. Therefore, it follows that $\Phi(J)= \hphi \comp \Pi(J)= J$. On the other hand, we have that $\Phi^\dag = \Pi^\dag \comp \hphi^\dag = \Pi \comp \hphi^{-1}$ and thus $\Phi^\dag(J) = J$. So $\Phi$ satisfies property~(\ref{eq:J}). Thus it is only left to show that $\Phi$ satisfies properties~(\ref{eq:verts}) and~(\ref{eq:edges}).

We first aim to show that $\Phi(A_G \schur X) = A_H \schur \Phi(X)$ for all $X \in \C^{V_G \times V_G}$. Let $\Lambda: \C^{V_G \times V_G}\to \C^{V_G \times V_G}$ be the linear map defined by $\Lambda(X) = A_G \schur X$. It is easy to see that $\Lambda$ is a self-adjoint projection onto a subspace of $\C^{V_G \times V_G}$.  Since $A_G \schur X \in \PA_G$ for all $X \in \PA_G$, we have that
\[\Pi \comp \Lambda \comp \Pi = \Lambda \comp \Pi.\]
It follows that
\[\Pi \comp \Lambda = \Pi^\dag \comp \Lambda^\dag = (\Lambda \comp \Pi)^\dag = (\Pi \comp \Lambda \comp \Pi)^\dag = \Pi \comp \Lambda \comp \Pi = \Lambda \comp \Pi,\]
i.e., that $\Lambda$ and $\Pi$ commute. Therefore,
\[\Phi(A_G \schur X) = \hphi \circ \Pi \circ \Lambda(X) =  \hphi \circ \Lambda \circ \Pi(X) =  \hphi(A_G \schur \Pi(X)) = A_H \schur \hphi(\Pi(X)) = A_H \schur \Phi(X),\]
where the second to last equality uses the fact that $\Pi(X) \in \PA_G$. So $\Phi$ satisfies property~(\ref{eq:edges}).

We can similarly show that $\Phi(I \schur X) = I \schur \Phi(X)$ and $\Phi(A_{\overline{G}} \schur X) = A_{\overline{H}} \schur \Phi(X)$, i.e., that $\Phi$ satisfies property~(\ref{eq:verts}). Therefore $\Phi$ is an $\psd$-isomorphism map for $G$ to $H$ and we are done.
%
\qeds

%
%
%
%

\subsection{Necessary conditions for $\psd$-isomorphism}\label{subsec:psdisonecessary}

\begin{lemma}\label{lem:psd2cospec}
If $G\cong_\psd H$ they have cospectral adjacency  matrices, as do their complements.
\end{lemma}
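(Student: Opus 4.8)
The plan is to derive cospectrality directly from the isomorphism-map machinery established in Section~\ref{sec:linearmaps}, specifically by invoking Lemma~\ref{lem:cospectral}. The key observation is that Lemma~\ref{lem:cospectral} already does the heavy lifting: it says that a completely positive, trace-preserving, unital map $\Phi$ that sends a Hermitian $X$ to $Y$ and whose adjoint sends $Y$ back to $X$ forces $X$ and $Y$ to be cospectral. So the entire task reduces to producing such a map and checking that $A_G, A_H$ (and then $A_{\overline G}, A_{\overline H}$) play the roles of $X$ and $Y$.

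First I would suppose $G \cong_\psd H$ and let $M$ be a $\psd$-isomorphism matrix, with associated linear map $\Phi$ having Choi matrix $M$, exactly as in the proof of Theorem~\ref{thm:psdiso}. By Theorem~\ref{thm:isomap}, $\Phi$ is a $\psd$-isomorphism map, so it is completely $\psd$-preserving (i.e.\ completely positive) and satisfies \eqref{eq:verts}--\eqref{eq:J}. As noted in Remark~\ref{isomapsremark}, these conditions yield $\Phi(I) = I$ and $\Phi(A_G) = A_H$, and since $\Phi^\dagger$ is likewise an isomorphism map from $H$ to $G$, one gets $\Phi^\dagger(I) = I$ and $\Phi^\dagger(A_H) = A_G$. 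The condition $\Phi(I) = I$ is unitality, and $\Phi^\dagger(I) = I$ together with Lemma~\ref{properties}(iii) gives that $\Phi$ is trace-preserving. Thus $\Phi$ is completely positive, trace-preserving, and unital, with $\Phi(A_G) = A_H$ and $\Phi^\dagger(A_H) = A_G$, and $A_G, A_H$ are Hermitian (real symmetric). Applying Lemma~\ref{lem:cospectral} with $X = A_G$, $Y = A_H$ immediately yields that $A_G$ and $A_H$ are cospectral.

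For the complements, I would use that $A_{\overline G} = J - I - A_G$ and $A_{\overline H} = J - I - A_H$. Since $\Phi(J) = J$, $\Phi(I) = I$, and $\Phi(A_G) = A_H$, linearity gives $\Phi(A_{\overline G}) = J - I - A_H = A_{\overline H}$; symmetrically $\Phi^\dagger(A_{\overline H}) = A_{\overline G}$. These matrices are again Hermitian, so a second application of Lemma~\ref{lem:cospectral} with $X = A_{\overline G}$, $Y = A_{\overline H}$ shows the complements are cospectral as well.

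I do not anticipate a serious obstacle here, since all the structural work is already packaged in Lemmas~\ref{lem:XYcommute} and~\ref{lem:cospectral}; the only thing to be careful about is verifying the three hypotheses (complete positivity, trace-preservation, unitality) of those lemmas, and in particular confirming that trace-preservation follows from $\Phi^\dagger(I) = I$ via Lemma~\ref{properties}(iii) rather than needing to be assumed separately. The rest is an immediate linear-combination argument for the complement case.
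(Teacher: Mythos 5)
Your proposal is correct and follows essentially the same route as the paper: obtain a $\psd$-isomorphism map $\Phi$ via Theorem~\ref{thm:psdiso}/\ref{thm:isomap}, note that $\Phi(A_G)=A_H$, $\Phi^\dagger(A_H)=A_G$ (and likewise for the complements via $A_{\overline G}=J-I-A_G$), and apply Lemma~\ref{lem:cospectral}. Your write-up is simply more explicit about verifying the CPTP and unitality hypotheses, which the paper leaves implicit.
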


\begin{proof}Assume that $G\cong_\psd H$. By Theorem \ref{thm:psdiso} there 
exists  an $\psd$-isomorphism map  $\Phi$ from  $G$ to $H$. As we have  already seen, the map $\Phi$ satisfies 
$ \Phi(A_G) = A_H, \ \Phi^\dag(A_H)=A_G, \ \Phi(A_{\overline{G}})=A_{\overline{H}}, \ \Phi^\dag(A_{\overline{H}})=A_{\overline{G}},$  and the claim  follows immediately by Lemma~\ref{lem:cospectral}.
\end{proof}
We note that the above result in the special case of 
  quantum isomorphic graphs was proved in~\cite{qiso}. 
Moreover, there are other types of cospectrality that one can consider. Another common  cospectrality relation is in terms of the   (combinatorial) Laplacian of a graph $G$, defined as the matrix $L = D - A_G$ where $D$ is a diagonal matrix of degrees 
 and $A_G$ is the adjacency matrix. 


\begin{lemma}\label{lem:laplacian}
If $G\cong_\psd H$  they have cospectral Laplacian matrices, as do their complements.
\end{lemma}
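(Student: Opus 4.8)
The plan is to reduce the statement to Lemma~\ref{lem:cospectral}, exactly as was done for the adjacency matrices in Lemma~\ref{lem:psd2cospec}. By Theorem~\ref{thm:psdiso} there is a $\psd$-isomorphism map $\Phi$ from $G$ to $H$; as established in the proof of that theorem and in Remark~\ref{isomapsremark}, the map $\Phi$ is completely positive, trace-preserving and unital, satisfies $\Phi(I)=I$, $\Phi(J)=J$, $\Phi(A_G)=A_H$ together with the analogous identities for $\Phi^\dag$, and commutes with Schur multiplication by $I$ and by $A_G$. Since the Laplacian is Hermitian, it suffices to show $\Phi(L_G)=L_H$ and $\Phi^\dag(L_H)=L_G$, at which point Lemma~\ref{lem:cospectral} delivers cospectrality.

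The key observation is that the degree matrix can be written using only operations under which $\Phi$ is well-behaved, namely $D = I \schur (A_G J)$: the product $A_G J$ has constant rows equal to the corresponding vertex degrees, and Schur-multiplying by $I$ extracts the diagonal. Hence $L_G = I \schur (A_G J) - A_G$. Applying $\Phi$, I would first invoke Lemma~\ref{lem:XYcommute} (valid since $\Phi$ is completely positive, trace-preserving and unital with $\Phi(A_G)=A_H$ and $\Phi^\dag(A_H)=A_G$) to obtain $\Phi(A_G J) = \Phi(A_G)\Phi(J) = A_H J$; then use the intertwining relation $\Phi(I \schur X)=I\schur\Phi(X)$ from~\eqref{eq:verts} to conclude $\Phi(I\schur(A_G J)) = I\schur(A_H J) = D_H$. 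Combined with $\Phi(A_G)=A_H$ this yields $\Phi(L_G)=D_H - A_H = L_H$. The identical computation applied to $\Phi^\dag$, which is likewise completely positive, trace-preserving and unital with $\Phi^\dag(A_H)=A_G$, gives $\Phi^\dag(L_H)=L_G$, so Lemma~\ref{lem:cospectral} shows that $L_G$ and $L_H$ are cospectral.

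For the complements I would exploit the identity $L_{\overline{G}} = nI - J - L_G$, where $n=|V_G|=|V_H|$, which follows from $D_G + D_{\overline{G}} = (n-1)I$ and $A_G + A_{\overline{G}} = J - I$. Since $\Phi$ is linear and satisfies $\Phi(I)=I$, $\Phi(J)=J$ and $\Phi(L_G)=L_H$, this immediately gives $\Phi(L_{\overline{G}}) = nI - J - L_H = L_{\overline{H}}$, and symmetrically $\Phi^\dag(L_{\overline{H}}) = L_{\overline{G}}$; a second application of Lemma~\ref{lem:cospectral} then finishes the proof.

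I expect the only genuine content to be the identity $D = I \schur (A_G J)$ together with the check that every operation appearing in $L_G$ is one of those intertwined by $\Phi$: the Schur action of $I$ via~\eqref{eq:verts}, and the matrix products against $A_G$ and $J$ via Lemma~\ref{lem:XYcommute}. Everything else is linearity and a direct appeal to Lemma~\ref{lem:cospectral}.
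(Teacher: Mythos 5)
Your proof is correct and follows essentially the same route as the paper: express the degree matrix using only operations intertwined by $\Phi$, deduce $\Phi(L_G)=L_H$ and $\Phi^\dag(L_H)=L_G$, and invoke Lemma~\ref{lem:cospectral}. The only cosmetic difference is that you write $D = I \schur (A_G J)$ where the paper uses $D = I \schur A_G^2$; both reduce to the same appeal to Lemma~\ref{lem:XYcommute} and Equation~\eqref{eq:verts}, and your explicit treatment of the complements via $L_{\overline{G}} = nI - J - L_G$ is a fine (slightly more detailed) way to finish.
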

\proof
It is easy to see that if $A_G$ is the adjacency matrix of $G$, then $I \schur A_G^2 - A_G$ is the Laplacian of $G$. Suppose that $\Phi$ is a $\psd$-isomorphism map for $G$ to $H$. Then we have that
\[\Phi(I \schur A_G^2 - A_G) = I \schur \Phi(A_G)^2 - \Phi(A_G) = I \schur A_H^2 - A_H,\]
which is of course the Laplacian of $H$. Similarly, we have that $\Phi^\dag(I \schur A_H^2 - A_H) = I \schur A_G^2 - A_G$, and by Lemma~\ref{lem:cospectral} this implies that the Laplacians of $G$ and $H$ have the same eigenvalues.\qeds

One can similarly show that $\psd$-isomorphic graphs are cospectral with respect to their signless or normalized Laplacians, as well as many other similarly constructed matrices. An important property of the Laplacian of a graph $G$ is that the number of connected components of $G$ is equal to the multiplicity of zero as an eigenvalue of its Laplacian \cite[Proposition 1.3.7]{brouwer2011spectra}. Therefore, we have the following.

\begin{corollary}\label{lem:conncomps}
If $G\cong_\psd H$  they have the same number of connected components, as do their complements. 
\end{corollary}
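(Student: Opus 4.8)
The plan is to deduce this directly from Lemma~\ref{lem:laplacian} together with the standard spectral characterization of connectivity just recalled. The key point is that ``cospectral'' means equality of the full multiset of eigenvalues, so in particular the multiplicity of any fixed eigenvalue agrees between the two matrices.

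First I would invoke Lemma~\ref{lem:laplacian}: assuming $G \cong_\psd H$, the Laplacians of $G$ and $H$ are cospectral, and likewise the Laplacians of $\overline{G}$ and $\overline{H}$ are cospectral. In particular, the multiplicity of $0$ as an eigenvalue of the Laplacian of $G$ equals the multiplicity of $0$ as an eigenvalue of the Laplacian of $H$, and the analogous statement holds for the complements. Next I would apply \cite[Proposition 1.3.7]{brouwer2011spectra}, which states that the number of connected components of a graph equals the multiplicity of $0$ as an eigenvalue of its Laplacian. Combining these two facts yields that $G$ and $H$ have the same number of connected components; running the identical argument on the cospectral pair $\overline{G}, \overline{H}$ gives the statement for the complements.

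There is essentially no technical obstacle here, since the result is an immediate corollary: the only thing to be careful about is that Lemma~\ref{lem:laplacian} delivers cospectrality of \emph{both} the Laplacians \emph{and} their complementary counterparts, so that the second half of the claim (about $\overline{G}$ and $\overline{H}$) follows by the same reasoning rather than requiring any new input. Thus the entire proof amounts to chaining the equality of Laplacian spectra from Lemma~\ref{lem:laplacian} with the component-counting property of the zero eigenvalue.
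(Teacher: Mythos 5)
Your proposal is correct and is exactly the argument the paper intends: the corollary is stated immediately after Lemma~\ref{lem:laplacian} together with the remark that the number of connected components equals the multiplicity of $0$ as a Laplacian eigenvalue, and your chaining of these two facts (for both the graphs and their complements) is precisely that reasoning.
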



Another property preserved by $\psd$-isomorphism has to do with the number of walks in a graph. We say that a graph $G$ is \emph{walk-regular} if the number of walks of length $\ell$ beginning and ending at a vertex of $G$ is independent of the choice of vertex. Equivalently, there exists a number $a_\ell\in \mathbb{N}$ such that $I \schur A_G^\ell = a_\ell I$ for all $\ell \in \mathbb{N}$. We also say that a graph is \emph{1-walk-regular} if it is walk-regular and there exists $b_\ell\in \mathbb{N}$ such that $A_G \schur A_G^\ell = b_\ell A_G$ for all $\ell \in \mathbb{N}$. Obviously, this means that the number of walks of length $\ell$ starting at one end of an edge and ending at the other does not depend on the edge. It turns out that $\psd$-isomorphism preserves both of the aforementioned   properties:

\begin{lemma}\label{lem:1walkregpreserved}
If $G$ and $H$ are $\psd$-isomorphic graphs, then $G$ is walk-regular if and only if $H$ is walk-regular. The same holds for 1-walk-regularity.
\end{lemma}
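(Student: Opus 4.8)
The plan is to exploit the $\psd$-isomorphism map $\Phi \colon \C^{V_G \times V_G} \to \C^{V_H \times V_H}$ guaranteed by Theorems~\ref{thm:psdiso} and~\ref{thm:isomap}, and simply apply it to the matrix identities that define (1-)walk-regularity. As established in the proof of Theorem~\ref{thm:psdiso}, this map is completely positive, trace-preserving, and unital, and satisfies $\Phi(I)=I$, $\Phi(A_G)=A_H$, and $\Phi^\dagger(A_H)=A_G$, together with the Schur-intertwining relations \eqref{eq:verts} and \eqref{eq:edges}. Crucially, its adjoint $\Phi^\dagger$ is itself a $\psd$-isomorphism map from $H$ to $G$, so the reverse implications will follow for free by symmetry and I only need to argue one direction.

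The first step is to record that $\Phi$ carries powers of $A_G$ to powers of $A_H$, i.e.\ $\Phi(A_G^\ell)=A_H^\ell$ for every $\ell \in \mathbb{N}$. This follows by induction on $\ell$ from Lemma~\ref{lem:XYcommute} applied with $X=A_G$ and $Y=A_H$, which yields $\Phi(A_G W)=A_H\Phi(W)$ for all $W$; the base case is $\Phi(I)=I$. For the walk-regularity equivalence I would then combine this with \eqref{eq:verts}: for each $\ell$,
\[
\Phi(I \schur A_G^\ell) = I \schur \Phi(A_G^\ell) = I \schur A_H^\ell .
\]
If $G$ is walk-regular then $I \schur A_G^\ell = a_\ell I$ for some $a_\ell \in \mathbb{N}$, so the left-hand side equals $\Phi(a_\ell I)=a_\ell I$, forcing $I \schur A_H^\ell = a_\ell I$; hence $H$ is walk-regular (with the same walk counts). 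The converse is identical with $\Phi^\dagger$ in place of $\Phi$.

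For 1-walk-regularity I would run the same argument but using \eqref{eq:edges} instead of \eqref{eq:verts}:
\[
\Phi(A_G \schur A_G^\ell) = A_H \schur \Phi(A_G^\ell) = A_H \schur A_H^\ell .
\]
If $G$ is 1-walk-regular then it is in particular walk-regular (so $H$ is walk-regular by the previous paragraph) and $A_G \schur A_G^\ell = b_\ell A_G$; applying $\Phi$ gives $b_\ell A_H = A_H \schur A_H^\ell$, so $H$ is 1-walk-regular as well, and again the converse is obtained by passing to $\Phi^\dagger$.

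I expect this argument to be essentially pure bookkeeping, so there is no serious obstacle. The one point requiring care is the identity $\Phi(A_G^\ell)=A_H^\ell$, which rests on the multiplicativity supplied by Lemma~\ref{lem:XYcommute} (and hence on $\Phi$ being completely positive, trace-preserving, and unital with $\Phi(A_G)=A_H$, $\Phi^\dagger(A_H)=A_G$), together with the observation that the matrices $I \schur A_G^\ell$ and $A_G \schur A_G^\ell$ to which \eqref{eq:verts} and \eqref{eq:edges} are applied are exactly of the form those relations cover.
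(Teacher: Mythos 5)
Your proof is correct and follows essentially the same route as the paper's: apply the $\psd$-isomorphism map to the identity $I\schur A_G^\ell = a_\ell I$ (resp.\ $A_G\schur A_G^\ell = b_\ell A_G$) using the Schur-intertwining relations and the fact that $\Phi(A_G^\ell)=A_H^\ell$, with the converse handled by $\Phi^\dagger$. The paper's version is merely terser, leaving the justification of $\Phi(A_G^\ell)=A_H^\ell$ via Lemma~\ref{lem:XYcommute} implicit, which you spell out correctly.
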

\proof
Suppose $G$ is walk-regular and let $a_\ell$ for $\ell \in \mathbb{N}$ satisfying $I \schur A_G^\ell = a_\ell I$. By Theorem \ref{thm:psdiso} there  exists   a $\psd$-isomorphism map $\Phi$  from  $G$ to $H$.
Then, we have that
\[I \schur A_H^\ell = \Phi(I \schur A_G^\ell) = \Phi(a_\ell I) = a_\ell I,\]
and thus $H$ is walk-regular. Essentially the same proof works for 1-walk-regularity.\qeds

Walk-regularity also turns out to be related to the partially coherent algebra of a graph. Below we refer to the algebra of polynomials in the adjacency matrix of a graph $G$ as the \emph{adjacency algebra of $G$}.

\begin{lemma}\label{vdfbrgn}
 If the adjacency algebra of a graph $G$ is equal to its partially coherent algebra, then $G$ is connected and walk-regular. The converse implication does not hold. 
 \end{lemma}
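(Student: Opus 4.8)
The plan is to prove the two directions separately: the forward implication by a direct algebraic argument, and the failure of the converse by exhibiting an explicit connected walk-regular graph whose partially coherent algebra is strictly larger than its adjacency algebra. Throughout write $\mathcal{B}$ for the adjacency algebra of $G$.

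For the forward direction I assume $\mathcal{B}=\PA_G$ and first deduce connectivity: since $J\in\PA_G=\mathcal{B}$, the all-ones matrix is a polynomial in $A_G$; but if $G$ were disconnected then, after simultaneously permuting rows and columns, $A_G$ and hence every polynomial in it would be block diagonal, contradicting the fact that $J$ has no zero entries. To deduce walk-regularity, note that for every $\ell$ the power $A_G^\ell$ lies in $\mathcal{B}=\PA_G$, and $\PA_G$ is closed under Schur multiplication by $I$, so the diagonal matrix $D_\ell:=I\schur A_G^\ell$ again lies in $\PA_G=\mathcal{B}$, i.e.\ it is a polynomial in $A_G$ and therefore commutes with $A_G$. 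Comparing the $(i,j)$ entries of $D_\ell A_G$ and $A_G D_\ell$ shows that the diagonal entries of $D_\ell$ agree along every edge of $G$; since $G$ is connected they are all equal, so $D_\ell=a_\ell I$, which is precisely walk-regularity.

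For the converse I would first record a reduction valid for any connected walk-regular $G$: since $J\in\mathcal{B}$ (as $G$ is connected and regular) and $I\schur A_G^\ell=a_\ell I\in\mathcal{B}$, the algebra $\mathcal{B}$ is already closed under Schur multiplication by $I$ and under taking adjoints; hence $\PA_G=\mathcal{B}$ holds if and only if $\mathcal{B}$ is in addition closed under Schur multiplication by $A_G$, i.e.\ if and only if $A_G\schur A_G^k\in\mathcal{B}$ for every $k$. Thus it suffices to produce a connected walk-regular graph for which some $A_G\schur A_G^k$ fails to be a polynomial in $A_G$. As the explicit witness I take the octagonal prism $G=C_8\cart K_2$, which is vertex-transitive (hence connected and walk-regular); its edges split into the ``octagon'' edges, with indicator $T=A_{C_8}\otimes I_2$, and the ``vertical'' edges $T'=A_G-T$. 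Writing $A_G=A_{C_8}\otimes I_2+I_8\otimes A_{K_2}$ and using $B^2=I,\ B^3=B$ for $B=A_{K_2}$, a short computation gives that the length-$3$ walk matrix $A_G^3$ equals $6$ on octagon edges and $7$ on vertical edges, so that $A_G\schur A_G^3=6T+7T'=7A_G-T$ and hence $T=7A_G-A_G\schur A_G^3\in\PA_G$. On the other hand $T\notin\mathcal{B}$: the eigenvalue $1$ of $A_G$ arises both from $w\otimes(1,1)$ with $A_{C_8}w=0$ and from $\mathbf{1}_8\otimes(1,-1)$, and on this three-dimensional eigenspace $T$ acts as $0$ on the first summand and as $2$ on the second, so it is not scalar on the eigenspace and therefore not a polynomial in $A_G$. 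Consequently $\mathcal{B}\subsetneq\PA_G$ while $G$ is connected and walk-regular, as required.

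The routine parts are the algebraic-closure bookkeeping in the forward direction and the reduction for the converse. The main obstacle is the converse's explicit witness: one needs an edge-orbit matrix that is simultaneously reachable inside $\PA_G$ — here realised as the Schur product $A_G\schur A_G^3$, which forces the graph to fail $1$-walk-regularity — and provably outside the adjacency algebra, which I verify through the non-scalar action of $T$ on a degenerate eigenspace of $A_G$ created by the eigenvalue collision $0+1=2-1=1$ of the Cartesian product. Engineering a graph where such a collision coexists with distinct length-$3$ walk counts on the two edge types is the delicate point; the octagonal prism is essentially the smallest prism for which this happens, since in the triangular and pentagonal prisms the relevant edge-orbit indicators turn out to be polynomials in $A_G$ (in the pentagonal case because $A_G$ has simple spectrum, in the triangular case because the coherent algebra already coincides with $\mathcal{B}$).
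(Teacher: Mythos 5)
Your proof is correct, but it takes a genuinely different route from the paper's on both halves. For the forward direction, the paper invokes Hoffman's theorem to get connectivity and regularity from $J\in\PA_G$, and then deduces walk-regularity by extracting a $01$-diagonal basis of $\mathcal{D}=\{I\schur X: X\in\PA_G\}$ (Lemma~\ref{lem:01basis}) and observing that a non-identity basis element $D_1$ would make $D_1J$ a non-symmetric member of the adjacency algebra; your argument instead gets connectivity from block-diagonality of polynomials in $A_G$ and walk-regularity from the fact that the diagonal matrix $I\schur A_G^\ell$, being a polynomial in $A_G$, commutes with $A_G$ and hence is constant along edges. Your version is more elementary and self-contained (it needs neither Lemma~\ref{lem:01basis} nor the Hoffman polynomial), at the cost of not exhibiting the structural fact about $\mathcal{D}$ that the paper reuses elsewhere. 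For the converse, the paper takes the distance-$1$-or-$2$ graph of $C_{10}$ and rules out membership of $A_{C_{10}}$ in the adjacency algebra by a dimension count ($6$ versus $5$ distinct eigenvalues), whereas you take $C_8\cart K_2$, produce the octagon-edge indicator $T=7A_G-A_G\schur A_G^3\in\PA_G$ (your walk counts $6$ and $7$ check out), and exclude $T$ from the adjacency algebra by its non-scalar action on the $3$-dimensional eigenspace for the eigenvalue $1=0+1=2-1$; both witnesses are valid, and your eigenspace argument is arguably more transparent about \emph{why} the obstruction occurs, while the paper's dimension count is shorter to verify. Two of your closing side remarks are inaccurate but harmless to the proof: the pentagonal prism does not have simple spectrum (its nontrivial eigenvalues have multiplicity $2$) --- the correct reason it fails is that the sums $\lambda+\mu$ are all distinct, so $T$ acts as a scalar on every eigenspace of $A_G$ --- and the hexagonal prism $C_6\cart K_2$ in fact already exhibits both the eigenvalue collision (at $0$) and the $6$-versus-$7$ walk-count discrepancy, so the octagonal prism is not the smallest such prism.
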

\proof
Let $\PA_G$ be the partially coherent algebra of $G$ and assume that this is equal to the adjacency algebra of $G$. Since $J \in \PA_G$ by definition, we have that $J$ is contained in the adjacency algebra of $G$ which happens if and only if $G$ is connected and regular~\cite[Theorem 1]{hoffman1963polynomial}. So it only remains to show that $G$ is walk-regular. 

Consider the subspace $\mathcal{D} = \{I \schur X : X \in \PA_G\}$. By Lemma~\ref{lem:01basis}, there exists an orthogonal basis of $\mathcal{D}$ consisting of diagonal $01$-matrices. Let $\{D_1, \ldots, D_r\}$ be this basis and suppose that $r > 1$. This implies that $D_1$ is not the identity matrix and therefore $D_1J \in \PA_G$ is not symmetric. This contradicts the assumption that $\PA_G$ is equal to the adjacency algebra of $G$, which obviously contains only symmetric matrices. Therefore, we have that $r = 1$ and $\mathcal{D}$ is just the span of the identity matrix. However, since $A_G^\ell \in \PA_G$, we have that $I \schur A_G^\ell \in \mathcal{D}$. Therefore, for any $\ell \in \mathbb{N}$, we have that there exists a number $a_\ell$ such that $I \schur A_G^\ell = a_\ell I$, i.e., $G$ is walk-regular.

To show that the converse does not hold, consider the 10-cycle $C_{10}$, and let $G$ be the graph with vertex set $V(C_{10})$ such that two vertices are adjacent if they are at distance one or two in $C_{10}$ (see Figure~\ref{fig:C10}). Note that $G$ is vertex transitive and therefore walk-regular, and it is obviously connected. We will show that the adjacency matrix of $C_{10}$ is contained in the partially coherent algebra of $G$, but not its adjacency algebra. For the former claim, it is straightforward to show that (or simply compute) the adjacency matrix of $C_{10}$ is equal to $A_G \schur A_G^2 - A_G \in \PA_G$. For the latter, if the adjacency algebra of $G$ contains the adjacency matrix of $C_{10}$, then it contains its entire adjacency algebra. However, the dimension of the adjacency algebra of a graph is equal to the number of distinct eigenvalues of its adjacency matrix. For $C_{10}$ this dimension is 6, but for $G$ it is 5 (by direct computation). Thus the adjacency algebra of the latter cannot contain the adjacency matrix of the former, and we are done.\qeds
\begin{figure}[h]
\begin{center}
\includegraphics[scale=.5]{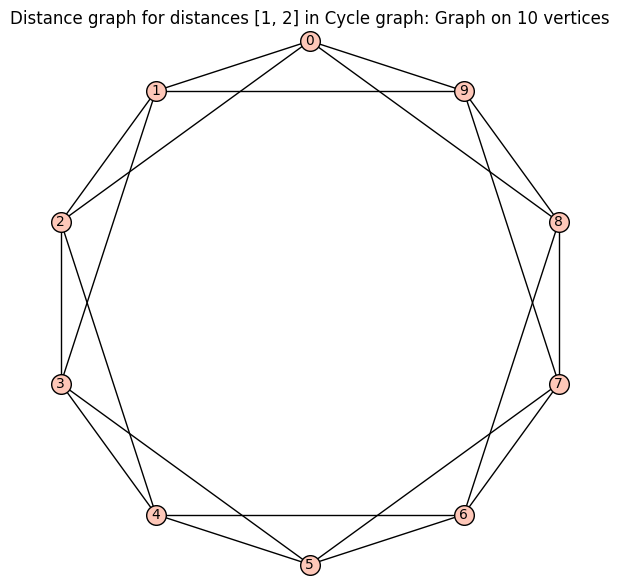}
\caption{Distance 1 and 2 graph of $C_{10}$.}\label{fig:C10}
\end{center}
\end{figure}

If we change walk-regular to 1-walk-regular, then the necessary condition of Lemma \ref{vdfbrgn}  becomes a sufficient condition:

\begin{lemma}\label{lem:1walkregPA}
If $G$ is a connected 1-walk-regular graph, then the partially coherent algebra of $G$ is equal to the adjacency algebra of $G$. The converse does not hold.
\end{lemma}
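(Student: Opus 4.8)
The plan is to prove the equality $\PA_G=\mathcal{P}$, where $\mathcal{P}$ denotes the adjacency algebra of $G$ (the algebra of polynomials in $A_G$), by establishing the two inclusions separately, and then to defeat the converse with one small explicit graph. One inclusion needs no hypothesis: since $\PA_G$ contains $I$ and $A_G$ and is closed under the algebra operations, it contains every polynomial in $A_G$, so $\mathcal{P}\subseteq\PA_G$ always.

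For the reverse inclusion I would show that $\mathcal{P}$ is itself an $\{I,A_G\}$-partially coherent algebra; minimality of $\PA_G$ then forces $\PA_G\subseteq\mathcal{P}$. Of the four axioms, unitality is immediate and self-adjointness holds because $A_G$ is real symmetric, whence $p(A_G)^\dagger=\bar p(A_G)\in\mathcal{P}$. Containment of $J$ is exactly where connectivity enters: a $1$-walk-regular graph is in particular regular (walk-regularity gives $I\schur A_G^2=a_2I$, i.e.\ constant degree), so $G$ being connected and regular puts $J$ in $\mathcal{P}$ by Hoffman's theorem \cite{hoffman1963polynomial}. The decisive axiom is closure under Schur multiplication by $I$ and by $A_G$: walk-regularity gives $I\schur A_G^\ell=a_\ell I$ and $1$-walk-regularity gives $A_G\schur A_G^\ell=b_\ell A_G$, so for any polynomial $p$ the matrices $I\schur p(A_G)$ and $A_G\schur p(A_G)$ are scalar multiples of $I$ and $A_G$ and hence lie in $\mathcal{P}$. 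This verifies all four axioms and finishes the forward direction.

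For the converse I would take $G$ to be the triangular prism $K_3\,\square\,K_2$. It is vertex-transitive, hence connected and walk-regular, but not $1$-walk-regular: writing $T$ for the matrix supported on the two triangles and $V$ for the perfect matching joining them, one has $A_G=T+V$ and $A_G\schur A_G^2=T$, which is not a scalar multiple of $A_G$. Nevertheless $\PA_G$ still equals $\mathcal{P}$. The prism has only four distinct eigenvalues, so $\dim\mathcal{P}=4$, and a short eigenspace computation shows that $T$ is constant on each eigenspace of $A_G$, so $T\in\mathcal{P}$; then $V=A_G-T\in\mathcal{P}$ and, using $J\in\mathcal{P}$, also $A_2:=J-I-A_G\in\mathcal{P}$. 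The four disjointly supported $01$-matrices $I,T,V,A_2$ thus lie in the four-dimensional $\mathcal{P}$ and span it, and a direct check shows they are closed under both matrix and Schur multiplication, so they constitute the coherent algebra of $G$. Since $\mathcal{P}\subseteq\PA_G\subseteq\A_G$ and here $\A_G=\mathcal{P}$, all three collapse, giving $\PA_G=\mathcal{P}$ for a graph that is not $1$-walk-regular.

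The main obstacle is the converse, and the tension it embodies: failure of $1$-walk-regularity means the edge-restricted walk matrix $A_G\schur A_G^\ell$ is not proportional to $A_G$, yet $\PA_G=\mathcal{P}$ requires it to be a polynomial in $A_G$, i.e.\ constant on the eigenspaces of $A_G$. Reconciling these two demands is the heart of the matter; the prism works precisely because its whole coherent algebra already collapses onto the adjacency algebra, and the verification that $T$ is constant on the eigenspaces of $A_G$ is the one computation I would carry out explicitly.
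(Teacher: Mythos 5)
Your forward direction is the paper's own argument verbatim in structure: show the adjacency algebra is itself $\{I,A_G\}$-partially coherent, getting $J$ from Hoffman's theorem (connectivity plus the regularity implied by walk-regularity) and the Schur-closure axioms from the identities $I\schur A_G^\ell=a_\ell I$ and $A_G\schur A_G^\ell=b_\ell A_G$, then invoke minimality of $\PA_G$. For the converse you and the paper use the same strategy --- exhibit a non-1-walk-regular graph whose \emph{coherent} algebra already collapses onto its adjacency algebra, so that the sandwich $\mathcal{P}\subseteq\PA_G\subseteq\A_G=\mathcal{P}$ forces equality --- but with different witnesses: the paper takes the distance-$\{2,3\}$ graph of $C_8$ (eight vertices, argued via the distance-regularity of $C_8$ and a dimension count $\dim\A_G=\dim\mathcal{P}=5$), while you take the triangular prism $K_3\,\square\,K_2$ (six vertices, argued via the tensor-product eigenspace decomposition). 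Your example checks out: the prism's spectrum is $\{3,1,0,0,-2,-2\}$, so $\dim\mathcal{P}=4$; writing $T=A_{K_3}\otimes I_2$ and $V=I_3\otimes A_{K_2}$, the four joint eigenspaces of $(T,V)$ have pairwise distinct $A_G$-eigenvalues, so $T$ is indeed constant on each $A_G$-eigenspace and lies in $\mathcal{P}$; and $A_G\schur A_G^2=T$ (adjacent vertices in a triangle share one neighbour, matched vertices share none) witnesses the failure of 1-walk-regularity. So your proof is correct; the only genuine difference from the paper is a smaller and arguably cleaner counterexample for the converse.
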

\begin{proof}

It is obvious that the partially coherent algebra of $G$ contains the adjacency algebra of $G$. To prove the first claim it therefore suffices to show that the adjacency algebra is $S$-partially coherent for $S = \{I, A_G\}$. First, since $G$ is 1-walk-regular, it is regular and moreover  it is also connected  by assumption. Using the known fact that $J$ is contained in the adjacency algebra if and only  if $G$ is connected and  regular~\cite{hoffman1963polynomial}, we have that the all ones matrix $J$ can be written as  a polynomial in $A_G$. Second, it is obvious that the adjacency algebra is closed under conjugate transpose. So it only remains to show that the adjacency algebra is closed under entrywise product with $I$ and $A_G$. Using  the definition of 1-walk-regularity  it follows that for any polynomial $f(x)=\sum_\ell c_\ell x^\ell$, we have that $I \schur f(A_G) = \sum_\ell c_\ell a_\ell I$ and $A_G \schur f(A_G) = \sum_\ell c_\ell b_\ell A_G$, and thus we have proven the claim.

To show that the converse does not hold, consider the 8-cycle $C_8$ and let $G$ be the graph with vertex set $V(C_8)$ such that two vertices are adjacent if they are at distance two or three in $C_8$. We will show that the \emph{coherent algebra} algebra of $G$ is equal to its adjacency algebra, but that it is not 1-walk-regular. Let $\A$ be the coherent algebra of $G$ and let $\mathcal{C}$ be the coherent algebra of $C_8$.

The graph $C_8$ is distance regular and thus both its adjacency and coherent algebras are equal to the span of its distance matrices which obviously contains $A_G$. Thus by minimality of $\A$, we have that $\A \subseteq \mathcal{C}$. On the other hand, $\mathcal{C}$ has dimension 5 since $C_8$ has diameter 4 and the adjacency algebra of $G$, which is contained in $\A$, has dimension 5 since $A_G$ has 5 distinct eigenvalues. Thus we have that the adjacency algebra of $G$ is equal to $\A = \mathcal{C}$.

However, the number of walks of length two between adjacent vertices of $G$ is not constant, it depends on the distance between the vertices in $C_8$. Therefore $G$ is not 1-walk-regular.
\end{proof}

The above two lemmas show that the property of having adjacency algebra equal to partially coherent algebra lies somewhere (strictly) in between being walk-regular and being 1-walk-regular.

\begin{theorem}\label{thm:1walkregpsd}
Let $G$ be a connected 1-walk-regular graph. For any graph $H$ we have that  $G \cong_\psd H$ if and only if $H$ is a connected 1-walk-regular graph that is cospectral to $G$.
\end{theorem}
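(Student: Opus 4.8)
The plan is to treat the two directions separately. The forward implication is essentially free: it assembles the necessary conditions already proved. Indeed, assume $G \cong_\psd H$ with $G$ connected and 1-walk-regular. Then Lemma~\ref{lem:1walkregpreserved} gives that $H$ is 1-walk-regular, Corollary~\ref{lem:conncomps} forces $H$ to have the same number of connected components as $G$ so that $H$ is connected, and Lemma~\ref{lem:psd2cospec} yields that $G$ and $H$ are cospectral. Thus this direction requires no new work beyond invoking these three results.

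For the converse, suppose $H$ is connected, 1-walk-regular, and cospectral to $G$. By Theorem~\ref{thm:psdiso} it suffices to exhibit a partial equivalence $\phi\colon \PA_G \to \PA_H$. The key simplification I would use is Lemma~\ref{lem:1walkregPA}: since $G$ and $H$ are connected and 1-walk-regular, both $\PA_G$ and $\PA_H$ coincide with the respective adjacency algebras, i.e.\ with the algebras of polynomials in $A_G$ and in $A_H$. Because $G$ and $H$ are cospectral, $A_G$ and $A_H$ have the same distinct eigenvalues and hence the same minimal polynomial, so the assignment $f(A_G) \mapsto f(A_H)$ is a well-defined linear bijection $\phi$, with inverse $f(A_H)\mapsto f(A_G)$ by the symmetric argument. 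Being a polynomial substitution, $\phi$ is automatically multiplicative, and since $A_G, A_H$ are real symmetric it is adjoint-preserving; so conditions~(1) and~(2) of partial equivalence hold, as do $\phi(I)=I$ and $\phi(A_G)=A_H$.

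What remains is to check $\phi(J)=J$ and the partial Schur-compatibility condition~(4), and I expect condition~(4) to be the main obstacle. My strategy is to reduce each of these to matching certain combinatorial constants of $G$ and $H$, which I would rewrite as spectral invariants and then equate via cospectrality. First, cospectrality forces $G$ and $H$ to share the same number of vertices $n$ and the same number of edges; being connected regular graphs whose largest eigenvalue equals the common degree, they share the same degree $k$. Hence $J$ is given by the identical polynomial $n\prod_{\lambda\ne k}(A-\lambda I)/(k-\lambda)$ in $A_G$ and in $A_H$ (namely $n$ times the rank-one spectral projection onto the Perron eigenvector), which immediately yields $\phi(J)=J$.

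For condition~(4), writing $N=f(A_G)$ and using walk-regularity and 1-walk-regularity, both $I\schur f(A_G)$ and $A_G\schur f(A_G)$ are scalar multiples of $I$ and $A_G$, with scalars governed by the walk numbers $a_\ell$ and $b_\ell$ defined by $I\schur A_G^\ell=a_\ell I$ and $A_G\schur A_G^\ell=b_\ell A_G$. The crux is then the observation that $a_\ell=\tr(A_G^\ell)/n$ and $b_\ell=\tr(A_G^{\ell+1})/(2|E|)$ are spectral invariants, so cospectrality gives the identical values for $H$; this is precisely the point where the three hypotheses (connectivity, 1-walk-regularity, cospectrality) must all be used in concert. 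Consequently $\phi(I\schur N)=I\schur\phi(N)$ and $\phi(A_G\schur N)=A_H\schur\phi(N)$, so condition~(4) holds. With all four conditions verified, $\phi$ is a partial equivalence, and Theorem~\ref{thm:psdiso} delivers $G\cong_\psd H$.
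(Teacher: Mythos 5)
Your proof is correct and follows essentially the same route as the paper's: both directions reduce via Theorem~\ref{thm:psdiso} and Lemma~\ref{lem:1walkregPA} to exhibiting a partial equivalence of adjacency algebras, with $\phi(J)=J$ obtained from the Perron projection and condition~(4) from the fact that $I\schur N$ and $A_G\schur N$ collapse to scalar multiples of $I$ and $A_G$ determined by traces. The only (cosmetic) difference is that you define $\phi$ by polynomial substitution $f(A_G)\mapsto f(A_H)$ rather than by conjugation with a unitary $U$ satisfying $UA_GU^\dagger=A_H$; these give the same map, and your explicit formulas $a_\ell=\tr(A_G^\ell)/n$, $b_\ell=\tr(A_G^{\ell+1})/(2|E|)$ play exactly the role of the paper's appeal to trace-preservation of~$\phi$.
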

\proof
If $G \cong_\psd  H$, it follows  that  $H$ is  a connected  (Lemma~\ref{lem:conncomps}) 1-walk-regular graph (Lemma \ref{lem:1walkregpreserved})  that is also 
cospectral to $G$ (Lemma \ref{lem:psd2cospec}). 

Conversely, suppose that $H$ is a connected 1-walk-regular graph that is cospectral to $G$. Since they are cospectral, by the spectral theorem there exists a unitary matrix $U$ such that $UA_GU^\dagger = A_H$. It is then easy to see that the map $\phi(X) = UXU^\dagger$ is an algebra isomorphism from the adjacency algebra of $G$ to that of $H$. By Lemma~\ref{lem:1walkregPA}, it follows  that $\phi$ is an algebra isomorphism from $\PA_G$ to $\PA_H$, it remains  to verify that this is a partial equivalence. Obviously, $\phi(X^\dagger) = \phi(X)^\dagger$, and so this condition is met. We also need that $\phi(J) = J$, but this holds because if $E_\lambda$ and $F_\lambda$ are the projections onto the $\lambda$-eigenspaces of $G$ and $H$ respectively, then $UE_\lambda U^\dagger = F_\lambda$, and $\frac{1}{n}J$ (where $n = |V_G| = |V_H|$) is the projection onto the maximum eigenspaces of both $G$ and $H$ since they are connected and regular. 

Lastly, we show  that $\phi(I \schur X) = I \schur \phi(X)$ and $\phi(A_G \schur X) = A_H \schur \phi(X)$ for all $X \in \PA_G$. As $G$ is a connected 1-walk-regular graph, by Lemma \ref{lem:1walkregPA} the partially coherent algebra of $G$ is equal to the adjacency algebra of $G$ and thus $I \schur X = (\tr(X)/n) I$ for all $X \in \PA_G$. 
Therefore,
\be\label{sDvaeb}
\phi(I \schur X) = \phi\left(\frac{\tr(X)}{n}I\right) = \frac{\tr(X)}{n}I.
\ee
On the other hand, $\phi(X) \in \PA_H$ and thus
\be\label{csddfbd}
I \schur \phi(X) = \left(\frac{\tr(\phi(X))}{n}\right)I = \left(\frac{\tr(X)}{n}\right), 
\ee
where the second equality follows from the fact that $\phi$ is trace-preserving. Thus we have shown that $\phi(I \schur X) = I \schur \phi(X)$ for all $X \in \PA_G$.

We similarly have that $A_G \schur X = \gamma A_G$ where $\gamma = \tr(A_GX)/nk$, where $k$ is the degree of both $G$ and $H$. Thus $\phi(A_G \schur X) = \gamma A_H$. Of course we also have that $A_H \schur \phi(X) = \gamma'A_H$ where
\[\gamma' = \tr(A_H \phi(X))/nk = \tr(\phi(A_G X))/nk = \tr(A_G X)/nk = \gamma.\]
Thus $G$ and $H$ are partially equivalent and by Theorem~\ref{thm:characterization} we have $G \cong_\psd H$.\qeds

\section{Characterizing $\dnn$-isomorphic graphs}\label{sec:characterization}

\subsection{Coherent algebras of graphs}\label{subsec:cohalggraphs}




The \emph{coherent algebra of a graph $G$}, denoted $\A_G$, is defined to be the intersection of all coherent algebras containing its adjacency matrix $A$, {\it i.e.}, the smallest coherent algebra containing $A$. Equivalently, this is all the matrices that can be written as a finite expression involving $I$, $A$, $J$, and the operations of addition, scalar multiplication, matrix multiplication, Schur multiplication, and conjugate transpose. 

An \emph{isomorphism} between coherent algebras $\A$ and $\B$ is a bijective linear map $\phi: \A \to \B$ that preserves all operations of a coherent algebra, {\it i.e.},
\begin{itemize}
\item $\phi(M^\dagger) = \phi(M)^\dagger$ for all $M \in \A$;
\item $\phi(MN) = \phi(M)\phi(N)$ for all $M,N \in \A$;
\item $\phi(M\schur N) = \phi(M)\schur\phi(N)$ for all $M,N \in \A$.
\end{itemize}

As a consequence of the above, we must have that $\phi(I) = I$ and $\phi(J) = J$. More generally, if $\phi$ is an isomorphism of coherent algebras $\A$ and $\B$, and $A_1, \ldots, A_d$ and $B_1, \ldots, B_d$ are the orthogonal 01-matrices forming bases of $\A$ and $\B$ respectively, then there exists a bijection $f: [d] \to [d]$ such that $\phi(A_i) = B_{f(i)}$ for all $i \in [d]$. 

If $G$ and $H$ are two graphs with respective adjacency matrices $A_G$ and $A_H$ and coherent algebras $\A_G$ and $\A_H$, then we say that $G$ and $H$ are \emph{equivalent} if there exists an isomorphism $\phi$ from $\A_G$ to $\A_H$ such that $\phi(A_G) = A_H$. We refer to the map $\phi$ as an \emph{equivalence} of $G$ and $H$. It is known~\cite{WL} that two graphs are equivalent if and only if they are not distinguished by the Weisfeiler-Leman method.

\subsection{The characterization} 

We will need the following:

\begin{lemma}\label{doublystochasticthm} 
Consider a doubly stochastic  matrix   $D \in \mathbb{R}^{d \times d}$ and column vectors   $u, v\in \R^d$ with the same multiset of entries. 
The following are equivalent:
\bi 
\item[(1)]  $Du = v$. 
\item[(2)] $D_{ij} = 0$ whenever $v_i\ne u_j$.
\item[(3)]  $D(u \schur w) = v \schur (Dw)$   for  all vectors $w$.
\item[(4)]  $D^T (v \schur w) = u \schur (D^T w)$ for  all vectors $w$.
\item[(5)] $D^Tv=u$. 
\ei

\end{lemma}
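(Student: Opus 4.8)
The plan is to lean on the already-proven Lemma~\ref{newlemma2} to dispatch the three Schur-intertwining conditions for free, and then to tie these to the two matrix--vector equations using the doubly stochastic structure together with the equal-multiset hypothesis. Since $D$ is real we have $D^\dag = D^T$, so Lemma~\ref{newlemma2} applied with $m = n = d$ gives immediately that conditions $(2)$, $(3)$, and $(4)$ are pairwise equivalent. It therefore remains only to weave conditions $(1)$ and $(5)$ into this equivalence, and for this I would prove the cycle $(2) \Rightarrow (1)$, $(2) \Rightarrow (5)$, $(1) \Rightarrow (2)$, and $(5) \Rightarrow (2)$.

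The two easy implications are $(2) \Rightarrow (1)$ and $(2) \Rightarrow (5)$. Assuming $D_{ij} = 0$ whenever $v_i \ne u_j$, every nonzero term in $(Du)_i = \sum_j D_{ij} u_j$ has $u_j = v_i$, so $(Du)_i = v_i \sum_j D_{ij} = v_i$ because the rows of $D$ sum to $1$; this is $(1)$. The claim $(5)$ is identical after transposing, now using that the columns of $D$ sum to $1$: $(D^T v)_j = \sum_i D_{ij} v_i = u_j \sum_i D_{ij} = u_j$.

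The crux of the argument, and the only place where the equal-multiset hypothesis is genuinely needed, is $(1) \Rightarrow (2)$; I would prove it by induction on $d$, peeling off the largest value. Let $M$ denote the common maximum entry of $u$ and $v$, and set $S = \{i : v_i = M\}$ and $T = \{j : u_j = M\}$, noting that the equal-multiset hypothesis forces $|S| = |T|$. For $i \in S$ the chain $M = v_i = \sum_j D_{ij} u_j \le M \sum_j D_{ij} = M$ must hold with equality, which forces $D_{ij} = 0$ for every $j$ with $u_j < M$; that is, the support of each row indexed by $S$ lies inside $T$. A mass-counting step then completes the peeling: the total mass of $D$ in the columns $T$ equals $|T| = |S|$ (columns sum to one), while the rows in $S$ already deposit mass $|S|$ there, leaving no mass for rows outside $S$, so $D_{ij} = 0$ also for $i \notin S,\ j \in T$. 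Hence $D$ decouples, the complementary submatrix $D[S^c, T^c]$ is again doubly stochastic, it satisfies $D[S^c, T^c]\, u|_{T^c} = v|_{S^c}$, and the restricted vectors $u|_{T^c}$ and $v|_{S^c}$ still share a common multiset. Applying the inductive hypothesis to this smaller system, together with the cross-block zeros just found and the trivial observation that $v_i = M = u_j$ on $S \times T$, yields $D_{ij} = 0$ whenever $v_i \ne u_j$, which is exactly $(2)$.

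Finally, $(5) \Rightarrow (2)$ follows by applying the implication $(1) \Rightarrow (2)$ to the transposed data: $D^T$ is doubly stochastic, $D^T v = u$, and $u, v$ share a multiset, so $D^T_{ji} = 0$ whenever $u_j \ne v_i$, which is precisely $(2)$ for $D$. This closes the cycle and establishes the equivalence of all five conditions. I expect the main obstacle to be the induction in $(1) \Rightarrow (2)$; the delicate point there is verifying that the peeled-off top value is genuinely decoupled, i.e.\ that \emph{both} cross-blocks $S \times T^c$ and $S^c \times T$ vanish, so that the reduced matrix is still doubly stochastic and the induction can proceed.
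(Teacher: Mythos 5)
Your proposal is correct and follows essentially the same route as the paper: the Schur-intertwining conditions $(2)$, $(3)$, $(4)$ are handled via Lemma~\ref{newlemma2}, and the key implication $(1)\Rightarrow(2)$ is proved by the same peel-off-the-largest-value argument with the same mass-counting step $|S|=|T|=\sum_{i\in S}\sum_{j\in T}D_{ij}\le |T|$ forcing both cross-blocks to vanish (the paper phrases the recursion as repeated block-diagonalization rather than explicit induction on $d$, but this is cosmetic). The remaining implications are wired up in a slightly different order than in the paper, but with the same content.
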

\begin{proof}$(1) \implies (2)$.  Suppose that $Du = v$. 
Set  $V=\{ i\in [n]: v_i = v^{\dar}_1\}$, i.e., the indices of the largest entry of $v$ 
and define $U$ similarly. As $D$ is doubly stochastic, the equation $v_i = (Du)_i = \sum_j D_{ij}u_j$ shows that $v_i$ is a convex combination of the entries of $u$. If $i \in V$,  no entry $u_j$ for $j \notin U$ can appear with nonzero weight in this convex combination. Therefore, we have that 
 \be\label{eevaerve}
 i \in V, \ j \notin U \implies D_{ij} = 0,
 \ee
  and thus 
\be\label{xsdvefbgr}  
1 = \sum_{j \in [n]} D_{ij} = \sum_{j \in U} D_{ij}, \text{ for all } i\in V.
\ee
Furthermore, we have that 
\be\label{fvbnrbtn}
|U| = |V| = \sum_{i \in V} \sum_{j \in U} D_{ij} = \sum_{j \in U} \sum_{i \in V} D_{ij} \le \sum_{j \in U} \sum_{i \in [n]} D_{ij} = |U|,
\ee
where for the first equality we use that $u$ and $v$ have the same multiset of entries and  for the second equality  we use \eqref{xsdvefbgr}. 
Thus, \eqref{fvbnrbtn} holds throughout with equality, which in turn implies  
 that 
 \be\label{dscefbrg}
 j \in U, \ i \notin V \implies D_{ij} = 0.
 \ee 
  Rearranging  $D$ so that the $V$-rows and $U$-columns are first, it follows by \eqref{eevaerve} and \eqref{dscefbrg} that 
\[D = \begin{pmatrix} D' & 0 \\ 0 & D''\end{pmatrix},\]
where $D'$ and $D''$ are doubly stochastic matrices. The same argument  can be applied to  $D''$ (where $V$ and $U$ are now defined as the indices of the second largest entry in $v$ and $u$ respectively). Continuing in the same manner it follows that  $D_{ij} = 0$ whenever $v_i \ne u_j$. 

$(2) \iff (3) $. We have already established this in the proof of Lemma~\ref{newlemma2}.

$(3)\implies (1).$ This follows  by selecting $w=e$. 

Lastly, to get $(4)$ and $(5)$ simply note that $ (2)$ is equivalent to $D^T_{ji}=0$ whenever $u_j\ne v_i$. 
\end{proof} 

As with Lemma~\ref{newlemma2}, we now state a form of the above lemma in terms of maps between matrix spaces. As before this is equivalent to the above by the correspondence $\vect(\Phi(X)) = \tilde{M}\vect(X)$ for where $\Phi$ has Choi matrix $M$ and $\tilde{M}_{hh',gg'} = M_{gh,g'h'}$. Note that $\tilde{M}$ having row and column sums equal to 1 is equivalent to $\Phi(J) = J = \Phi^\dag(J)$.

\begin{lemma}\label{doublystochasticthm2}
Let $\Phi : \mathbb{C}^{V_G \times V_G} \to \mathbb{C}^{V_H  \times V_H}$ be a linear map with entrywise nonneggative Choi matrix $M$ such that $\Phi(J) = J = \Phi^\dag(J)$. For any fixed pair of matrices  $X \in \mathbb{C}^{V_G \times V_G}$ and $Y \in \mathbb{C}^{V_H \times V_H}$ with the same multiset of entries the following are equivalent:
\begin{enumerate}
\item[$(1)$] $\Phi(X) = Y$.
\item[$(2)$] $M_{gh,g'h'} = 0$ whenever $X_{gg'} \ne Y_{hh'}$.
\item[$(3)$] $\Phi(X \schur W) = Y \schur \Phi(W)$ for all $W \in \mathbb{C}^{V_G \times V_G}$.
\item[$(4)$] $\Phi^\dag(Y \schur Z) = X \schur \Phi^\dag(Z)$ for all $Z \in \mathbb{C}^{V_H \times V_H}$.
\item[$(5)$] $\Phi^\dag(Y) = X$.
\end{enumerate}
\end{lemma}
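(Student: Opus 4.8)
The plan is to transcribe the statement into the vector language of Lemma~\ref{doublystochasticthm} via the Choi correspondence, and then supply the one genuinely new ingredient needed to pass from real to complex entries. Set $D := \tilde{M}$ (so $\tilde{M}_{hh',gg'} = M_{gh,g'h'}$) and put $u := \vect(X)$, $v := \vect(Y)$. Since $M$ is entrywise nonnegative so is $D$, and because $\vect(\Phi(J)) = D\vect(J)$ and $\vect(\Phi^\dagger(J)) = D^\dagger \vect(J) = D^T \vect(J)$ (as $D$ is real), the hypothesis $\Phi(J) = J = \Phi^\dagger(J)$ says exactly that $D$ has all row and column sums equal to $1$, i.e.\ that $D$ is doubly stochastic. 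Using $\vect(\Phi(X)) = D\vect(X)$, $\vect(\Phi^\dagger(Y)) = D^T\vect(Y)$, and $\vect(X \schur W) = \vect(X)\schur\vect(W)$, the five assertions $(1)$--$(5)$ of the present lemma translate term-by-term into the five assertions $(1)$--$(5)$ of Lemma~\ref{doublystochasticthm} for the triple $(D,u,v)$, and the hypothesis that $X$ and $Y$ have the same multiset of entries says precisely that $u$ and $v$ do.

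With this dictionary several equivalences are immediate and require no further work. The equivalences $(2) \Leftrightarrow (3) \Leftrightarrow (4)$ hold for arbitrary complex $X,Y$ directly by Lemma~\ref{newlemma}. Moreover $(3) \Rightarrow (1)$ follows by evaluating $(3)$ at $W = J$: since $X \schur J = X$ and $\Phi(J) = J$, we get $\Phi(X) = \Phi(X \schur J) = Y \schur \Phi(J) = Y \schur J = Y$. Symmetrically, evaluating $(4)$ at $Z = J$ and using $\Phi^\dagger(J) = J$ gives $(4) \Rightarrow (5)$. Hence it remains only to establish the two ``hard'' implications $(1) \Rightarrow (2)$ and $(5) \Rightarrow (2)$, after which the cycle $(1)\Rightarrow(2)\Rightarrow(3)\Rightarrow(1)$ together with $(2)\Rightarrow(4)\Rightarrow(5)\Rightarrow(2)$ closes up the full five-way equivalence.

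These two implications are precisely the implications $(1)\Rightarrow(2)$ and $(5)\Rightarrow(2)$ of Lemma~\ref{doublystochasticthm}, with the sole difference that there $u,v$ are real whereas here they may be complex; this discrepancy is the main obstacle, since the proof of Lemma~\ref{doublystochasticthm} peels off entries in decreasing order, an argument that has no direct complex analogue. I would bridge it with a single separating functional. Let $s_1,\dots,s_k \in \mathbb{C}$ be the distinct values among the entries of $u$ (equivalently of $v$), and choose an $\mathbb{R}$-linear map $\ell : \mathbb{C} \to \mathbb{R}$ injective on $\{s_1,\dots,s_k\}$; such an $\ell$ exists because each pair $s_a \neq s_b$ rules out only a single line of coefficient vectors in $\mathbb{R}^2$, and finitely many lines cannot cover $\mathbb{R}^2$. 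Applying $\ell$ entrywise, the real vectors $\ell(u)$ and $\ell(v)$ still have the same multiset of entries, and $\ell$ commutes with the real matrices $D$ and $D^T$. Thus $Du = v$ gives $D\,\ell(u) = \ell(v)$, and the $(1)\Rightarrow(2)$ part of Lemma~\ref{doublystochasticthm} yields $D_{ij} = 0$ whenever $\ell(v_i) \neq \ell(u_j)$; by injectivity of $\ell$ on the values this is exactly $D_{ij} = 0$ whenever $v_i \neq u_j$, which is $(2)$. The implication $(5)\Rightarrow(2)$ follows identically from $D^T v = u \Rightarrow D^T\ell(v) = \ell(u)$ together with the $(5)\Rightarrow(2)$ part of Lemma~\ref{doublystochasticthm}.

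As an alternative to the separating functional, one may instead re-run the proof of Lemma~\ref{doublystochasticthm} verbatim, replacing ``the largest entry of $v$'' by ``a value lying at a vertex of the convex hull of $\{s_1,\dots,s_k\}$ in $\mathbb{C}$'': a vertex of a convex set cannot be written as a nontrivial convex combination of other points of the set, so the row-sum argument forces $D_{ij}=0$ off the corresponding block, and the same peeling-off induction then applies to the remaining doubly stochastic submatrix. Either route disposes of the complex case, completing the proof.
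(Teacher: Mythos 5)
Your proposal follows the same route as the paper: the paper's entire proof of Lemma~\ref{doublystochasticthm2} is the remark that, under the correspondence $\vect(\Phi(X)) = \tilde{M}\vect(X)$ with $\tilde{M}_{hh',gg'} = M_{gh,g'h'}$, the statement is just Lemma~\ref{doublystochasticthm} applied to the doubly stochastic matrix $\tilde M$ and the vectors $\vect(X),\vect(Y)$, exactly the dictionary you set up. Your reduction of the problem to the two implications $(1)\Rightarrow(2)$ and $(5)\Rightarrow(2)$ via Lemma~\ref{newlemma} and the evaluations at $W=J$, $Z=J$ is also correct and matches the logical structure of the vector lemma's proof. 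Where you genuinely add something is in noticing that Lemma~\ref{doublystochasticthm} is stated and proved only for \emph{real} vectors $u,v\in\R^d$ (its proof orders the entries and peels off the largest value), whereas Lemma~\ref{doublystochasticthm2} is asserted for complex $X,Y$ --- and complex entries do occur in the application inside Theorem~\ref{thm:characterization}, since coherent algebras are complex algebras. The paper passes over this point silently; both of your repairs are sound. The separating $\R$-linear functional $\ell:\C\to\R$, injective on the finitely many entry values, commutes with the real matrix $D$ and reduces the complex case to the real one verbatim; alternatively, replacing ``largest entry'' by ``extreme point of the convex hull of the values in $\C\cong\R^2$'' makes the original peeling induction go through, since an extreme point of a finite set cannot be a nontrivial convex combination of the set's other elements. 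Either patch is short and closes a gap that the paper's own one-line proof leaves open, so your write-up is, if anything, more complete than the original.
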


\begin{theorem}\label{thm:characterization}
Two graphs $G$ and $H$ are equivalent if and only if $G \cong_\dnn H$. 
\end{theorem}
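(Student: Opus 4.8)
The plan is to reduce both directions to the machinery of Section~\ref{sec:linearmaps}, exploiting the fact that for the $\dnn$ cone the Choi matrix is \emph{doubly stochastic}, so that Lemmas~\ref{doublystochasticthm} and~\ref{doublystochasticthm2} become available on top of everything already used to prove Theorem~\ref{thm:psdiso}. Throughout, write $\{A_i\}_{i\in\mathcal{I}}$ and $\{B_j\}_{j\in\mathcal{J}}$ for the orthogonal $01$-bases of $\A_G$ and $\A_H$, with supports (relations) $R_i$ and $S_j$ and intersection numbers $p_{ij}^k$.

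\emph{From $G\cong_\dnn H$ to equivalence.} Let $M$ be a $\dnn$-isomorphism matrix and $\Phi$ the map with Choi matrix $M$. By Theorem~\ref{thm:isomap}, $\Phi$ is a $\dnn$-isomorphism map, hence (as $\dnn\subseteq\psd$) a $\psd$-isomorphism map, so exactly as in Theorem~\ref{thm:psdiso} and Remark~\ref{isomapsremark} it is CPTP and unital, with $|V_G|=|V_H|=:n$, $\Phi(I)=I=\Phi^\dagger(I)$, $\Phi(A_G)=A_H$, $\Phi^\dagger(A_H)=A_G$, $\Phi(J)=J=\Phi^\dagger(J)$, and $\Phi$ Schur-commuting with each of $I,A_G,A_{\overline{G}}$. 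Since $M$ is entrywise nonnegative, the reindexed matrix $\tilde M$ (with $\tilde M_{hh',gg'}=M_{gh,g'h'}$) has nonnegative entries and, by $\Phi(J)=J=\Phi^\dagger(J)$, unit row and column sums; thus $\tilde M$ is doubly stochastic, and both $\Phi$ and $\Phi^\dagger$ map entrywise-nonnegative matrices to entrywise-nonnegative matrices. The entire forward direction now rests on the following claim: $\Phi$ maps $\{A_i\}$ bijectively, via a size-preserving bijection $f$ (so $|R_i|=|S_{f(i)}|$), onto $\{B_j\}$, with $\Phi^\dagger(B_{f(i)})=A_i$. Granting this, adjoint-preservation is Lemma~\ref{properties}(iv); matrix-multiplicativity $\Phi(A_iA_j)=\Phi(A_i)\Phi(A_j)$ follows from Lemma~\ref{lem:XYcommute} (applicable because $\Phi(A_i)=B_{f(i)}$ and $\Phi^\dagger(B_{f(i)})=A_i$); and Schur-multiplicativity holds because $\Phi(A_i\schur A_j)=\delta_{ij}B_{f(i)}=\Phi(A_i)\schur\Phi(A_j)$. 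Extending by linearity, $\Phi|_{\A_G}$ is a coherent-algebra isomorphism sending $A_G$ to $A_H$, i.e.\ $G$ and $H$ are equivalent.

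\emph{The claim is the main obstacle}, and I would prove it by induction along the Weisfeiler--Leman refinement. The base partition $\{I,A_G,A_{\overline{G}}\}$ is mapped (size-preservingly, with $\Phi^\dagger$ inverting) to $\{I,A_H,A_{\overline{H}}\}$ by the properties above. For the inductive step, suppose a partition with $01$-cells $\{C_a\}\subseteq\A_G$ is matched to cells $\{D_{g(a)}\}\subseteq\A_H$, size-preservingly and with $\Phi^\dagger$ inverting; each such $C_a$ then Schur-commutes with $\Phi$ (Lemma~\ref{doublystochasticthm2}, since $\Phi(C_a)=D_{g(a)}$ is $01$ of the same size) and lies in the ``$\mathcal U$-type'' set on which Lemma~\ref{lem:XYcommute} gives multiplicativity. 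The zero pattern forced by $\Phi(C_a)=D_{g(a)}$ (Lemma~\ref{doublystochasticthm2}(2)) shows that the restriction of $\tilde M$ to the rows indexed by $S_{g(a)}$ and columns indexed by $R_a$ is itself a square doubly stochastic matrix $D$. Writing $u$ for the restriction of the path-count matrix $C_bC_c$ to $R_a$ and $u'$ for the restriction of $D_{g(b)}D_{g(c)}$ to $S_{g(a)}$, the identities $\Phi(C_a\schur C_bC_c)=D_{g(a)}\schur D_{g(b)}D_{g(c)}$ (and its $\Phi^\dagger$-analogue) give $Du=u'$ and $D^\top u'=u$; hence $\|u'\|_2^2=u'^\top Du=u^\top u=\|u\|_2^2$, while Birkhoff's theorem gives $u'\prec u$, so strict convexity of $x\mapsto x^2$ forces $u'$ to be a permutation of $u$. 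Thus $u,u'$ share a multiset of entries, and Lemma~\ref{doublystochasticthm} shows $D$ respects their level sets, so $\Phi$ carries each level-set indicator of $C_bC_c$ inside $C_a$ to the corresponding same-size level-set indicator inside $D_{g(a)}$. Taking the common refinement over all pairs $(b,c)$ refines the matching one WL step further (the new cells again lie in $\A_H$, are size-matched, and are inverted by $\Phi^\dagger$); when the refinement stabilizes we reach the coherent configurations, proving the claim.

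\emph{From equivalence to $G\cong_\dnn H$.} This mirrors the converse half of Theorem~\ref{thm:psdiso}, with one extra nonnegativity check. Let $\phi:\A_G\to\A_H$ be an equivalence; it carries $\{A_i\}$ to $\{B_j\}$ via a bijection $f$ with $\phi(A_i)=B_{f(i)}$. Let $\Pi$ be the orthogonal projection onto $\A_G$, which is CPTP and unital by Lemma~\ref{lem:algproj} and whose Choi matrix $\sum_i |R_i|^{-1}(A_i)_{pq}(A_i)_{p'q'}$ is entrywise nonnegative. Set $\Phi=\phi\comp\Pi$. As a $*$-isomorphism of matrix $*$-algebras, $\phi$ is completely positive, so $\Phi$ is completely positive, and its Choi matrix $\sum_i |R_i|^{-1}(A_i)_{pq}(B_{f(i)})_{p'q'}$ is entrywise nonnegative; hence $C_\Phi\in\dnn$, i.e.\ $\Phi$ is completely $\dnn$-preserving. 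Using $\phi(J)=J$ and that $\phi$ is fully Schur-multiplicative (so it commutes with Schur multiplication by $I$ and $A_G$), the verification that $\Phi$ satisfies~\eqref{eq:verts}--\eqref{eq:J} is identical to the $\psd$ case, with $\Lambda$ and $\Pi$ commuting. Therefore $\Phi$ is a $\dnn$-isomorphism map, and by Theorem~\ref{thm:isomap} its Choi matrix is a $\dnn$-isomorphism matrix, giving $G\cong_\dnn H$.
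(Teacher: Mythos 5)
Your proposal is correct, and its converse direction is essentially the paper's own argument: build $\Phi=\hphi\comp\Pi$ and check entrywise nonnegativity of the Choi matrix via the explicit formula $\Phi(X)=\sum_i m_i^{-1}\tr(A_i^TX)B_{f(i)}$; the paper obtains complete positivity from the unitary representation of Lemma~\ref{lem:unitary} rather than from the general fact that $*$-isomorphisms of matrix $*$-algebras are completely positive, but that difference is cosmetic. The forward direction, however, is organized quite differently. The paper never identifies the $01$-bases explicitly: it shows that the set of \emph{matched pairs} $(X,Y)$ with $\Phi(X)=Y$ and $\Phi^\dag(Y)=X$ is closed under linear combinations, adjoints, matrix products (Lemma~\ref{lem:XYcommute}) and, crucially, arbitrary Schur products --- the last because the doubly stochastic structure forces matched pairs to share a multiset of entries, so Lemma~\ref{doublystochasticthm2} applies --- and then observes that every generating expression of $\A_G$ in $I,A_G,J$ is therefore matched to the corresponding expression in $I,A_H,J$. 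You instead run the $2$-dimensional Weisfeiler--Leman refinement explicitly, matching the cells of the two partitions step by step via the block-restricted doubly stochastic matrices and a mutual-majorization argument, and then invoke the Weisfeiler--Leman theorem that the stable partition is the coherent configuration. Your route checks out (the block restriction is genuinely doubly stochastic because of the zero pattern, the $\|u\|=\|u'\|$ computation is valid, and the two refinements stabilize simultaneously since the level-set sizes agree at every stage), and it has the virtue of making the connection to Weisfeiler--Leman completely transparent; but it is considerably longer, needs the external fact that WL stabilizes at the coherent configuration, and duplicates work the paper's closure argument gets for free, since your level-set indicators are themselves Schur-polynomial expressions in already-matched pairs.
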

\begin{proof}

Suppose that $G \cong_\dnn H$ and  let $M$ be a $\dnn$-isomorphism matrix.  Consider the linear map $\Phi: \C^{V_G \times V_G} \to \C^{V_H \times V_H}$ whose Choi matrix is equal to $M$.
As $M$  is a $\dnn$-isomorphism matrix it follows by Theorem~\ref{thm:isomap} that $\Phi$ is  a $\dnn$-isomorphism map. We show that $\Phi$ is the desired equivalence between the coherent algebras of $G$ and $H$. 
 
 The same arguments as in Theorem \ref{thm:psdiso} apply to show that   any finite expression involving $I, A_G, A_{\overline{G}}$ and the operations of addition, scalar multiplication, matrix multiplication, and Schur multiplication where at least one of the factors is $I$ or $A_G$, will be mapped by $\Phi$ to the same expression with $A_G$ and $A_{\overline{G}}$ replaced with $A_H$ and $A_{\overline{H}}$ respectively. Furthermore, $\Phi^\dag$ is the inverse of $\Phi$ on such expressions. 
It remains to consider the case of  arbitrary Schur products. For this, it suffices to   show that for any pair of 
   matrices $X,Y$ such that $\Phi(X)=Y$ and $\Phi^\dag(Y)=X$ we  have that 
\be\label{phischur}
\Phi(X \schur W) = \Phi(X) \schur \Phi(W) \ \text{ and } \ \Phi^\dag(Y  \schur W)=\Phi^\dag(Y) \schur \Phi^\dag(W), \text{ for all } W. 
\ee
However, since $\Phi$ is a $\dnn$-isomorphism map, we have that its Choi matrix is entrywise nonneggative and $\Phi(J) = J = \Phi^\dag(J)$. This is the matrix map analog of being doubly stochastic, and thus $\Phi(X) = Y$ and $\Phi^\dag(Y) = X$ implies that $X$ and $Y$ have the same multiset of entries. Therefore we can apply Lemma~\ref{doublystochasticthm2} to obtain Equation~\eqref{phischur}.
We thus have that for $X_1,X_2,Y_1,Y_2$ such that $\Phi(X_i) = Y_i$ and $\Phi^\dag(Y_i) = X_i$, it holds that $\Phi(X_1 \schur X_2) = Y_1 \schur Y_2$ and $\Phi^\dag(Y_1 \schur Y_2) = X_1 \schur X_2$. Since $\Phi(I) = I$, $\Phi(A_G) = A_H$, $\Phi(J) = J$, and similarly for $\Phi^\dag$, it follows that any expression in $I, A_G, J$ using addition, scalar multiplication, matrix multiplication, conjugate transposition, and Schur product is mapped by $\Phi$ to the same expression but with $A_G$ replaced with $A_H$ (and $\Phi^\dag$ is the inverse of $\Phi$ on such expressions). Therefore the restriction of $\Phi$ to $\A_G$ is an equivalence of $G$ and $H$.


Conversely, let  $\phi: \A_G \to \A_H$ be  an  equivalence of $G$ and $H$. By Lemma~\ref{lem:unitary}, there exists a unitary matrix $U$ such that $\phi(X) = UXU^\dagger$ for all $X \in \A_G$. Let $\hphi: \C^{V_G \times V_G} \to \C^{V_H \times V_H}$ be defined as $\hphi(X) = UXU^\dagger$ for all $X \in \C^{V_G \times V_G}$. Moreover, let  $\Pi: \C^{V_G \times V_G} \to \A_G$ be the orthogonal projection onto the  coherent algebra of $\A_G$. The same arguments as in the proof of  Theorem \ref{thm:psdiso} imply  that the composition  
$\Phi = \hphi \comp \Pi: \C^{V_G \times V_G} \to \C^{V_H \times V_H}$
is an $\psd$-isomorphism map for $G$ to $H$. Thus, to show that $\Phi$ is a $\dnn$-isomorphism 
 map it   remains to show that it is completely $\dnn$-preserving, or equivalently, that its  Choi matrix  $C_\Phi$ is entrywise nonnegative (we already know it is psd). 
 
  

Let $A_1, \ldots, A_d$ and $B_1, \ldots, B_d$ be the 01-bases of $\A_G$ and $\A_H$ respectively. Then, we have that  $\phi(A_i) = B_i$ and  as $\phi$ is trace preserving (cf. Lemma \ref{pequiv_trace_pres}) it follows that $m_i = \tr(A_i^TA_i) = \tr(B_i^TB_i)$, where $m_i$ is the number of 1's in $A_i$ and $B_i$. Then,   $\{\frac{1}{\sqrt{m_i}} A_i : i \in [d]\}$ is an orthonormal basis for $\A_G$, and thus

\[\Pi(X) = \sum_{i=1}^d \frac{1}{m_i}\langle A_i, X\rangle A_i = \sum_{i=1}^d \frac{1}{m_i}\tr(A_i^T X) A_i,\]
which in turn implies that
\be\label{csdcdfg}
\Phi(X) = \sum_{i=1}^d \frac{1}{m_i}\tr(A_i^T X) B_i.
\ee
By \eqref{csdcdfg} we clearly see that   if $X$ is entrywise nonnegative, then $\Phi(X)$ is also nonnegative.  Thus the Choi matrix of $\Phi$ is doubly nonnegative and so $\Phi$ is a $\dnn$-isomorphism map for $G$ to $H$.
\end{proof}

\subsection{Necessary conditions on $\dnn$-isomorphic graphs}\label{subsec:dnnconds}

Since any pair of $\dnn$-isomorphic graphs are also $\psd$-isomorphic, we know that any of the necessary conditions for $\psd$-isomorphism given in the previous section are also necessary for $\dnn$-isomorphism. However, some of these necessary conditions can be strengthened.

%


We say that the $d$-distance graph of $G$ is the graph with vertex set $V_G$ such that two vertices are adjacent if their distance in $G$ is exactly $d$. The $d$-distance matrix of $G$ is the adjacency matrix of its $d$-distance graph, so in particular, it has zero diagonal.

%

\begin{lemma}\label{cor:distmats}
Consider two graphs $G$ and $H$. Define   $X^{\ell,i}$ (respectively $Y^{\ell,i}$) as the matrix whose $gg'$-entry is 1 if the number of walks of length $\ell$ in $G$ from $g$ to $g'$ is equal to $i$, and is otherwise zero. 
Moreover, let  $X^{(\ell)}$ and $Y^{(\ell)}$ be the $\ell$-distance matrices of $G$ and $H$ respectively.
Assume  that $G$ and $H$ are $\dnn$-isomorphic graphs with isomorphism map $\Phi$.  Then  we have that 
$$ 
 \Phi(X^{\ell,i}) = Y^{\ell,i} \text{ for all } \ell, i \in \mathbb{N}\  \text{ and } \ \Phi(X^{(\ell)}) = Y^{(\ell)} \text{ for all } \ell = 0, 1, \ldots, \mathrm{diam}(G)$$
\end{lemma}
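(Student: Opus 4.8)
The plan is to build on the structural fact, proved within Theorem~\ref{thm:characterization}, that the restriction of the $\dnn$-isomorphism map $\Phi$ to the coherent algebra $\A_G$ is an equivalence onto $\A_H$. Writing $A_1,\dots,A_d$ and $B_1,\dots,B_d$ for the 01-bases of $\A_G$ and $\A_H$, we may index them so that $\Phi(A_j)=B_j$ for all $j$; moreover $\Phi$ preserves matrix products and satisfies $\Phi(A_G)=A_H$, so $\Phi(A_G^\ell)=A_H^\ell$ for every $\ell$. The whole argument rests on a single observation: each matrix $X^{\ell,i}$ and each distance matrix $X^{(\ell)}$ is \emph{constant on the cells} of the coherent configuration of $G$, and hence lies in $\A_G$. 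Once this is in place, tracking $\Phi$ on the 01-basis finishes everything.

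For the walk matrices I would first expand $A_G^\ell=\sum_j c_j^{(\ell)}A_j\in\A_G$ in the 01-basis. Since the $A_j$ have pairwise disjoint supports summing to $J$, the entry of $A_G^\ell$ on the support of $A_j$ is the constant $c_j^{(\ell)}$, namely the number of length-$\ell$ walks between any pair of vertices in the corresponding cell. Consequently $\{(g,g'):(A_G^\ell)_{gg'}=i\}$ is exactly the union of the supports of those $A_j$ with $c_j^{(\ell)}=i$, so $X^{\ell,i}=\sum_{j:\,c_j^{(\ell)}=i}A_j$ lies in $\A_G$. (Equivalently, $X^{\ell,i}$ is the Schur-Lagrange interpolation polynomial of $A_G^\ell$ and $J$ singling out the value $i$.) Applying $\Phi$ and using $\Phi(A_G^\ell)=A_H^\ell$ together with $\Phi(A_j)=B_j$ gives $A_H^\ell=\sum_j c_j^{(\ell)}B_j$ with \emph{identical} coefficients; hence the length-$\ell$ walk count on the $H$-cell supporting $B_j$ is again $c_j^{(\ell)}$, so $Y^{\ell,i}=\sum_{j:\,c_j^{(\ell)}=i}B_j$---the same index set. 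Therefore $\Phi(X^{\ell,i})=\sum_{j:\,c_j^{(\ell)}=i}B_j=Y^{\ell,i}$.

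For the distance matrices I would note that the distance between a pair in the cell of $A_j$ equals $\min\{m:c_j^{(m)}>0\}$, and the previous step shows that the entire walk-count profile $(c_j^{(m)})_{m\ge 0}$ agrees for the $G$-cell of $A_j$ and the $H$-cell of $B_j$. Thus distance too is constant on cells and is matched by the correspondence $A_j\leftrightarrow B_j$; setting $S_\ell=\{j:\min\{m:c_j^{(m)}>0\}=\ell\}$ yields $X^{(\ell)}=\sum_{j\in S_\ell}A_j$ and $Y^{(\ell)}=\sum_{j\in S_\ell}B_j$, whence $\Phi(X^{(\ell)})=Y^{(\ell)}$. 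In particular $\mathrm{diam}(G)=\mathrm{diam}(H)$, so the stated range of $\ell$ is unambiguous.

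The only genuinely substantive point is the membership $X^{\ell,i},X^{(\ell)}\in\A_G$, i.e. that the length-$\ell$ walk counts, and hence the distances, are constant on the cells of the coherent configuration. For walk counts this is immediate from $A_G^\ell\in\A_G$; for distances it reduces to the elementary fact that the distance of a pair is the least walk length with a positive count, a quantity determined by the (cell-constant) walk-count profile. After that, preservation under $\Phi$ is pure bookkeeping: $\Phi(A_j)=B_j$ together with $\Phi(A_G^\ell)=A_H^\ell$ forces the expansion coefficients to match, and no Schur-product manipulation beyond what Theorem~\ref{thm:characterization} already supplies is required.
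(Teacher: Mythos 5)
Your proof is correct, but it follows a different route from the paper's. The paper works directly with the properties of the $\dnn$-isomorphism map $\Phi$: since $\Phi(A_G^\ell)=A_H^\ell$ and $\Phi^\dag(A_H^\ell)=A_G^\ell$, Lemma~\ref{doublystochasticthm2} gives that $A_G^\ell$ and $A_H^\ell$ have the same multiset of entries and that $\Phi$ commutes with Schur multiplication by such matched pairs; the paper then extracts $X^{\ell,j}$ via the explicit Schur--Lagrange interpolation $X^{\ell,j}=\schur_{i\ne j}\frac{1}{j-i}(A_G^\ell-iJ)$ (which you mention only parenthetically), and handles the distance matrices by induction on $\ell$ through the identity $X^{(\ell)}=\bigl(\sum_{i\ge 1}X^{\ell,i}\bigr)\schur\bigl(J-\sum_{k<\ell}X^{(k)}\bigr)$. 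You instead invoke the conclusion of Theorem~\ref{thm:characterization} that $\Phi$ restricts to a coherent-algebra equivalence sending the 01-basis $A_1,\dots,A_d$ to $B_1,\dots,B_d$, observe that walk counts and hence distances are constant on the cells, and reduce everything to matching expansion coefficients in these bases; this eliminates both the interpolation formula and the induction, at the cost of leaning on the full structural content of Theorem~\ref{thm:characterization} (in particular the uniqueness of the orthogonal 01-basis and the fact that an equivalence permutes it) rather than only on the Schur-multiplicativity of $\Phi$ on matched pairs. Both arguments are legitimate here since the lemma appears after Theorem~\ref{thm:characterization}; yours is arguably tidier once that theorem is granted, while the paper's is closer to being self-contained at the level of the map properties. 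Your closing remark that $\mathrm{diam}(G)=\mathrm{diam}(H)$ is a small bonus the paper does not state explicitly.
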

\begin{proof}
We have that $\Phi(A_G^\ell) = A_H^\ell$ and $\Phi^\dag(A_H^\ell) = A_G^\ell$ and thus $A_G^\ell$ and $A_H^\ell$ have the same multiset of entries. Let $S$ be the set of entries of $A_G^\ell$. Then $A_G^\ell = \sum_{i \in S} iX^{\ell,i}$ and $A_H^\ell = \sum_{i \in S} iY^{\ell,i}$. It is then easy to see that for any $j \in S$,
\[X^{\ell,j} = \schur_{i \ne j}\frac{1}{j-i}(A_G^\ell - iJ),\]
and similarly for $Y^{\ell,j}$. It then follows from the properties of $\dnn$-isomorphism maps that $\Phi(X^{\ell,i}) = Y^{\ell,i}$ for all $\ell,i$.

The second claim holds for $\ell = 0,1$, and we proceed by induction. It is easy to see that
\begin{equation}\label{eqn:distmats}
X^{(\ell)} = \left(\sum_{i \ge 1} X^{\ell,i}\right) \schur \left(J - \sum^{\ell-1}_{k = 0} X^{(k)}\right),
\end{equation}
and thus the claim follows from the properties of $\Phi$ and the obvious induction argument.
\end{proof}

%
%

We saw in  Section~\ref{subsec:psdisonecessary} that $\psd$-isomorphism preserves the property of being 1-walk-regular. For $\dnn$-isomorphism, an even stronger property known as distance regularity is preserved. A connected graph $G$ of diameter $d$ is distance regular if there exist integers $p_{ij}^k$ for $i,j,k \in \{0,1,\ldots,d\}$ such that the number of vertices $w$ at distance $i$ from $u$ and distance $j$ from $v$ is equal to $p_{ij}^k$ whenever $u$ and $v$ are at distance $k$, i.e., whenever ${\rm dist}(u,v)=k$ we have that $|N_i(u)\cap N_j(v)|=p^k_{ij}. $  Letting $X^{(\ell)}$ for $\ell \in \{0,1,\ldots,d\}$ be the $\ell$ distance matrix of $G$, one can see that this definition is equivalent to the equations
\[X^{(i)}X^{(j)} = \sum_k p_{ij}^k X^{(k)}.\]

In other words, the distance matrices are a 01 orthogonal basis of the algebra they generate. From here it is easy to see that this is a coherent algebra. Furthermore, since the distance matrices of $G$ are contained in the coherent algebra of $G$ by Equation~(\ref{eqn:distmats}), the coherent algbera generated by the distance matrices of $G$ is equal to the coherent algebra of $G$. Thus a graph is distance regular if and only if its coherent algebra is equal to the span of its distance matrices, in which case all matrices in the coherent algebra are symmetric and thus they all commute. This allows us to prove the following:

\begin{lemma}\label{lem:distregpreserved}
If $G \cong_\dnn H$ then $G$ is distance regular if and only if $H$ is distance regular.
\end{lemma}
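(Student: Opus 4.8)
The plan is to leverage the characterization of distance regularity recorded just above the statement, namely that a connected graph is distance regular if and only if its coherent algebra coincides with the span of its distance matrices, together with Lemma~\ref{cor:distmats}, which says that a $\dnn$-isomorphism map $\Phi$ from $G$ to $H$ sends the $\ell$-distance matrix $X^{(\ell)}$ of $G$ to the $\ell$-distance matrix $Y^{(\ell)}$ of $H$ for every $\ell \le \mathrm{diam}(G)$. Since $\Phi^\dagger$ is itself a $\dnn$-isomorphism map from $H$ to $G$ by Theorem~\ref{thm:isomap}, the relation $\cong_\dnn$ is symmetric, so it suffices to prove one implication: assuming $G$ is distance regular, I would show that $H$ is distance regular. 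Throughout I fix a $\dnn$-isomorphism map $\Phi$ from $G$ to $H$, whose restriction to $\A_G$ is an equivalence onto $\A_H$ by Theorem~\ref{thm:characterization}.

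First I would record that $H$ is connected: a distance regular graph is connected, and by Corollary~\ref{lem:conncomps} a pair of $\dnn$-isomorphic graphs has the same number of connected components. Next I would match the diameters. Writing $d = \mathrm{diam}(G)$, each matrix $X^{(\ell)}$ for $\ell \in \{0,\ldots,d\}$ is nonzero, since $G$ has vertices at every distance up to its diameter, and lies in $\A_G$. Because the restriction $\Phi|_{\A_G} : \A_G \to \A_H$ is a linear bijection, it is injective, and hence $Y^{(\ell)} = \Phi(X^{(\ell)})$ is nonzero for each such $\ell$. As $Y^{(\ell)}$ is the $\ell$-distance matrix of $H$, this forces $\mathrm{diam}(H) \ge d$, and the symmetric argument applied to $\Phi^\dagger$ yields $\mathrm{diam}(H) = d$. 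Consequently $Y^{(0)},\ldots,Y^{(d)}$ are precisely the distance matrices of $H$.

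Finally, since $G$ is distance regular, the characterization gives $\A_G = \spn\{X^{(0)},\ldots,X^{(d)}\}$. Applying the bijection $\Phi|_{\A_G}$ then gives
\[
\A_H = \Phi(\A_G) = \spn\{\Phi(X^{(0)}),\ldots,\Phi(X^{(d)})\} = \spn\{Y^{(0)},\ldots,Y^{(d)}\},
\]
so the coherent algebra of $H$ equals the span of its distance matrices, which by the characterization means that $H$ is distance regular. The symmetric statement follows from the symmetry of $\cong_\dnn$ noted at the outset.

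I expect the only genuinely delicate point to be the diameter-matching step: one must ensure both that $\Phi$ collapses no distance matrix of $G$ to zero and that no additional distance class appears in $H$. Both are handled by combining the injectivity of $\Phi$ on $\A_G$ (Theorem~\ref{thm:characterization}) with the symmetric use of $\Phi^\dagger$ and the image identity $\Phi(X^{(\ell)}) = Y^{(\ell)}$ of Lemma~\ref{cor:distmats}; once the diameters agree, the remaining argument is immediate linear algebra.
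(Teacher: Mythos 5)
Your proof is correct and follows essentially the same route as the paper's: both invoke Lemma~\ref{cor:distmats} to get $\Phi(X^{(\ell)}) = Y^{(\ell)}$, use the fact that $\Phi|_{\A_G}$ is a linear bijection onto $\A_H$ (Theorem~\ref{thm:characterization}), and conclude that the distance matrices of $H$ span $\A_H$. You are somewhat more careful than the paper in explicitly verifying connectivity of $H$ and matching the diameters, but these are refinements of the same argument rather than a different approach.
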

\proof
By assumption, there exists a $\dnn$-isomorphism map  $\Phi=\Phi_M$, where $M$ is a $\dnn$-isomorphism matrix. 
It suffices to show that $G$ being distance regular implies that $H$ is distance regular. So let $G$ be distance regular and let $X^{(\ell)}$ and $Y^{(\ell)}$ be the $\ell$-distance matrices of $G$ and $H$ respectively. By Lemma~\ref{cor:distmats}, we have that $\Phi(X^{(\ell)}) = Y^{(\ell)}$ for all $\ell$. Since $G$ is distance regular, the $X^{(\ell)}$ form a basis of $\A_G$. Since the image of $\A_G$ under $\Phi$ is $\A_H$, and since the restriction of $\Phi$ to $\A_G$ is a linear bijection, we have that $\Phi$ maps any basis of $\A_G$ to a basis of $\A_H$. Therefore, the distance matrices of $H$ form a basis of $\A_H$ and thus $H$ is distance regular.\qeds

In Section~\ref{subsec:psdisonecessary} we showed that the partially coherent algebra of a connected 1-walk-regular graph is equal to its adjacency algebra. Analogously, it is well known that the coherent algebra of a distance regular graph is equal to its adjacency algebra~\cite{BCN}. We will not give a proof of this here, but it suffices to show that the distance matrices of a distance regular graph are polynomials in its adjacency matrix, and this can be done with induction. As one might expect, this allows us to prove an analog of Theorem~\ref{thm:1walkregpsd} for $\dnn$-isomorphism.

\begin{theorem}\label{thm:distregdnn}
Let $G$ be a distance regular graph. If $H$ is a graph, then $G \cong_\dnn H$ if and only if $H$ is a distance regular graph that is cospectral to $G$.
\end{theorem}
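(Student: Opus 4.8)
The plan is to mirror the proof of Theorem~\ref{thm:1walkregpsd}, replacing ``connected $1$-walk-regular'' by ``distance regular'' and ``partially coherent algebra'' by ``coherent algebra''. The forward direction is immediate from necessary conditions already in hand: if $G \cong_\dnn H$ then $H$ is distance regular by Lemma~\ref{lem:distregpreserved} (and hence connected), while $\dnn$-isomorphism implies $\psd$-isomorphism through the chain~\eqref{eq:chain}, so Lemma~\ref{lem:psd2cospec} shows $G$ and $H$ are cospectral. All the substance is therefore in the converse.

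For the converse, assume $H$ is distance regular and cospectral to $G$. As in Theorem~\ref{thm:1walkregpsd}, cospectrality and the spectral theorem furnish a unitary $U$ with $U A_G U^\dagger = A_H$; put $\phi(X) = U X U^\dagger$. Since $G$ and $H$ are distance regular, their coherent algebras equal their adjacency algebras (as recalled just above), and cospectral graphs have adjacency algebras of equal dimension, so $\phi$ restricts to a linear bijection $\A_G \to \A_H$ that automatically preserves the adjoint and the matrix product and satisfies $\phi(I)=I$ and $\phi(A_G)=A_H$. By Theorem~\ref{thm:characterization} it then suffices to check that $\phi$ is an \emph{equivalence} of $G$ and $H$, i.e.\ that it additionally preserves the Schur product and sends $J$ to $J$.

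The crux, and the main obstacle, is full Schur-product preservation; unlike the $\psd$ setting, which needed only Schur products with $I$ and $A_G$, here we must control $\phi(M\schur N)$ for arbitrary $M,N\in\A_G$, and conjugation by $U$ does not respect the Schur product in general. I would reduce this to the single claim that $\phi$ sends the $\ell$-distance matrix $X^{(\ell)}$ of $G$ to the $\ell$-distance matrix $Y^{(\ell)}$ of $H$ for every $\ell$. Because $G$ is distance regular, $X^{(\ell)} = p_\ell(A_G)$ for a distance polynomial $p_\ell$, so $\phi(X^{(\ell)}) = p_\ell(A_H)$; likewise $Y^{(\ell)} = q_\ell(A_H)$ for $H$'s distance polynomials $q_\ell$. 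Thus the claim is exactly that $p_\ell = q_\ell$, i.e.\ that $G$ and $H$ share the same intersection array. This is where cospectrality enters essentially: the distance polynomials of a distance-regular graph are the orthogonal polynomials of its spectral measure $\mu = \tfrac1n\sum_i m_i\,\delta_{\theta_i}$, which depends only on the eigenvalues $\theta_i$ and their multiplicities $m_i$, so cospectral distance-regular graphs share this measure and hence the same three-term recurrence coefficients, giving $p_\ell = q_\ell$. (Alternatively one may invoke the classical fact that the spectrum of a distance-regular graph determines its intersection array~\cite{BCN}.)

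Granting $\phi(X^{(\ell)}) = Y^{(\ell)}$, the rest is routine: $\phi$ carries the $01$-basis $\{X^{(\ell)}\}$ of $\A_G$ bijectively onto the $01$-basis $\{Y^{(\ell)}\}$ of $\A_H$, so from $X^{(i)}\schur X^{(j)} = \delta_{ij}X^{(i)}$ and its analogue for $H$, bilinearity gives $\phi(M\schur N)=\phi(M)\schur\phi(N)$ for all $M,N\in\A_G$, while $\phi(J)=\phi\big(\sum_\ell X^{(\ell)}\big)=\sum_\ell Y^{(\ell)}=J$. Hence $\phi$ is an equivalence of $G$ and $H$, and Theorem~\ref{thm:characterization} delivers $G\cong_\dnn H$.
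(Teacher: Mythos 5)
Your proposal is correct and follows essentially the same route as the paper's proof: forward direction from Lemmas~\ref{lem:psd2cospec} and~\ref{lem:distregpreserved}, and converse via conjugation by the unitary from cospectrality, reducing everything to $\phi(X^{(\ell)})=Y^{(\ell)}$, which both you and the paper obtain from the classical fact that the distance polynomials of a distance-regular graph are determined by its spectrum. The only cosmetic difference is that you derive $\phi(J)=J$ from $J=\sum_\ell X^{(\ell)}$ while the paper uses the $\lambda_1$-eigenprojection $\tfrac{1}{n}J$; both are fine.
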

\proof
The only if direction follows from Lemmas~\ref{lem:psd2cospec} and~\ref{lem:distregpreserved}. For the other direction, suppose that $G$ and $H$ are cospectral distance regular graphs with eigenvalues $\lambda_1 \ge \ldots \ge \lambda_n$ for $n = |V_G| = |V_H|$, and let $d$ be the diameter of both graphs (it is well known that the diameter of a distance regular graph is one less than the number of distinct eigenvalues~\cite{BCN}, so this will be the same for $G$ and $H$). Note that the unique eigenvector for $\lambda_1$ is the constant vector since the graphs are connected and regular. Also let $X^{(\ell)}$ and $Y^{(\ell)}$ be the $\ell$-distance matrices of $G$ and $H$ respectively. Since $G$ and $H$  are cospectral, there exists a unitary matrix $U$ such that $UA_GU^\dagger = A_H$. It is then immediate that the map $\phi(X) = UXU^\dagger$ is an algebra isomorphism of the adjacency algebras of $G$ and $H$, which are respectively equal to their coherent algebras since the graphs are distance regular. We aim to show that $\phi$ is an equivalence. 

Obviously, $\phi(X^\dagger) = \phi(X)^\dagger$, and $\phi(J) = J$ since $\frac{1}{n}J$ is the projection onto the $\lambda_1$-eigenspace for both $G$ and $H$. So we only need to show that $\phi(X \schur X') = \phi(X) \schur \phi(X')$ for all $X,X' \in \A_G$. 

For any $X,X' \in \A_G$, we have that $X = \sum_\ell \alpha_\ell X^{(\ell)}$ and $X' = \sum_\ell \alpha'_\ell X^{(\ell)}$ for some coefficients $\alpha_\ell, \alpha'_\ell$ for $\ell = 0, \ldots d$ since the distance matrices of $G$ span $\A_G$ by the distance regularity of $G$. Then $X \schur X' = \sum_{\ell} \alpha_\ell \alpha'_\ell X^{(\ell)}$, since $X^{(\ell)} \schur X^{(k)} = \delta_{\ell k} X^{(\ell)}$. Suppose that $\phi(X^{(\ell)}) = Y^{(\ell)}$. Then,
\begin{align*}
\phi(X \schur X') &= \phi\left(\sum_{\ell = 0}^d \alpha_\ell \alpha'_\ell X^{(\ell)} \right) \\
&= \sum_{\ell = 0}^d \alpha_\ell \alpha'_\ell \phi\left(X^{(\ell)} \right) \\
&= \sum_{\ell} \alpha_\ell\alpha'_\ell Y^{(\ell)} \\
&= \left(\sum_{\ell=0}^d \alpha_\ell Y^{(\ell)}\right)  \schur \left(\sum_{\ell=0}^d \alpha'_\ell Y^{(\ell)}\right) \\
&= \phi\left(\sum_{\ell=0}^d \alpha_\ell X^{(\ell)}\right) \schur \phi\left(\sum_{\ell=0}^d \alpha'_\ell X^{(\ell)}\right) \\
&= \phi(X) \schur \phi(X').
\end{align*}
Thus it suffices to show that $\phi(X^{(\ell)}) = Y^{(\ell)}$ for all $\ell = 0, \ldots, d$. Since $G$ is distance regular, there exist polynomials $f_\ell$ for $\ell = 0, \ldots, d$ such that $f_\ell(A_G) = X^{(\ell)}$~\cite[Section 2.7]{BCN}. Moreover, the polynomials $f_\ell$ only depend on the eigenvalues of the distance regular graph. Since $G$ and $H$ are cospectral, we have that $f_\ell(A_H) = Y^{(\ell)}$. Since $\phi(f(A_G)) = f(\phi(A_G)) = f(A_H)$ for any polynomial $f$, it follows that $\phi(X^{(\ell)}) = \phi(f_{\ell}(A_G)) = f_\ell(A_H) = Y^{(\ell)}$ as desired.\qeds

\section{Separations between the various notions of isomorphism}\label{sec:separations}

In Equation~\eqref{eq:chain} we noted that the four  different types of isomorphisms we consider in this paper satisfy the following chain of implications:
\[G \cong H \ \Rightarrow \ G \cong_q H \ \Rightarrow \ G \cong_\dnn H \ \Rightarrow \ G \cong_\psd H .\]
In our earlier work~\cite{qiso}, we showed that the first implication cannot be reversed, i.e., that there are quantum isomorphic graphs that are not isomorphic. Here we show that none of the other implications can be reversed. We begin by showing that $\psd$-isomorphism does not imply $\dnn$-isomorphism.



The first graph we consider is  the 4-cube: the graph whose vertices are the binary strings of length 4, two being adjacent if they differ in exactly one position. This is a well-known distance transitive (and therefore distance regular) graph. Less well-known is the Hoffman graph, which is the unique graph cospectral to the 4-cube. 
 This Hoffman graph is not distance regular but is 1-walk-regular. Both graphs are shown in Figure~\ref{fig:4cubehoff}. 

\begin{figure}\label{fig:4cubehoff}
\begin{center}
\includegraphics[scale=.6]{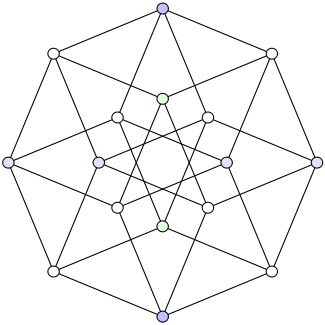}
\hspace{1cm}
\includegraphics[scale=.24]{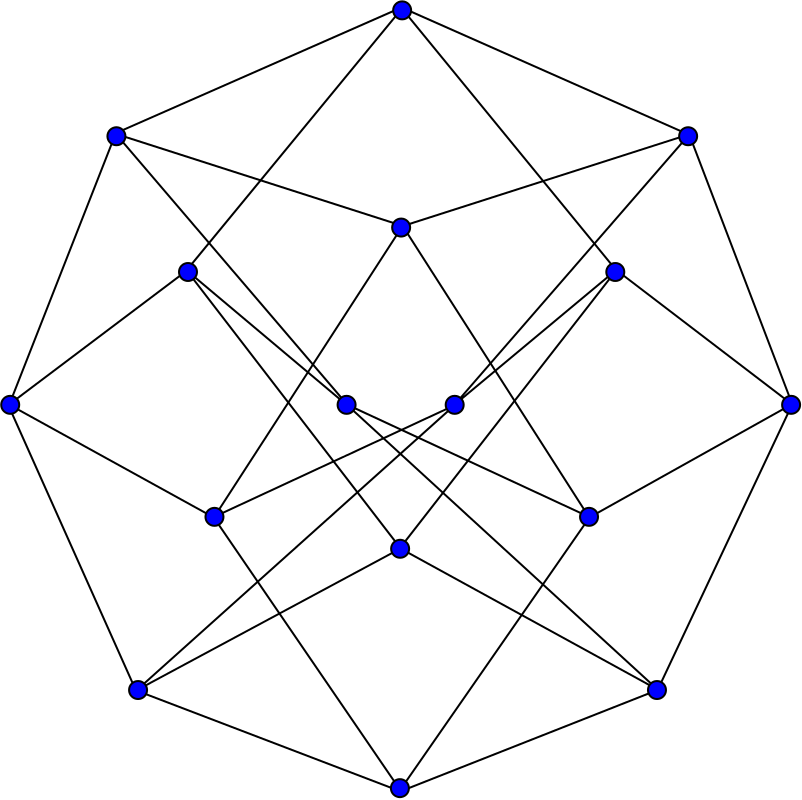}
\caption{The 4-cube and Hoffman graphs.}
\end{center}
\end{figure}

\begin{theorem}\label{thm:psdnotdnn}
There exist graphs $G$ and $H$ that are $\psd$-isomorphic but not $\dnn$-isomorphic, i.e., $G \cong_\psd H \ \not\Rightarrow \ G \cong_\dnn H$.
\end{theorem}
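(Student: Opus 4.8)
The plan is to take $G$ to be the 4-cube $Q_4$ and $H$ to be the Hoffman graph (the two graphs depicted in Figure~\ref{fig:4cubehoff}), and to separate the two relations by playing the characterization theorems for $\psd$- and $\dnn$-isomorphism of regular graphs against each other. The relevant structural facts are exactly those recorded just above the statement: the $4$-cube is distance transitive, hence distance regular, hence connected and $1$-walk-regular; the Hoffman graph is the unique graph cospectral to $Q_4$ and is connected and $1$-walk-regular but \emph{not} distance regular. These are precisely the hypotheses consumed by the two characterizations, so essentially no further combinatorial work is needed.

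For the positive direction I would invoke Theorem~\ref{thm:1walkregpsd}. Since $G = Q_4$ is a connected $1$-walk-regular graph and $H$ is a connected $1$-walk-regular graph cospectral to $G$, that theorem immediately yields $G \cong_\psd H$. (Note that $Q_4$ being distance regular makes it walk-regular and $1$-walk-regular, so the hypothesis on $G$ is met; the corresponding properties of $H$ are quoted as known facts about the Hoffman graph.)

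For the negative direction I would argue by contradiction using Theorem~\ref{thm:distregdnn}. Since $G = Q_4$ is distance regular, that theorem asserts that $G \cong_\dnn H$ can hold only if $H$ is itself a distance regular graph cospectral to $G$. But the Hoffman graph is not distance regular, so we conclude $G \not\cong_\dnn H$. Combining the two directions gives the desired separation, namely $G \cong_\psd H$ while $G \not\cong_\dnn H$, so the implication $G \cong_\dnn H \Rightarrow G \cong_\psd H$ cannot be reversed.

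The only content beyond citing these two theorems is verifying the graph-theoretic properties of the Hoffman graph: that it is connected, cospectral to $Q_4$, $1$-walk-regular, and not distance regular. Cospectrality and the failure of distance regularity are classical, and $1$-walk-regularity can be confirmed by a direct (or computer-assisted) check of the walk counts $I \schur A_H^\ell$ and $A_H \schur A_H^\ell$. Thus there is no serious obstacle remaining; all of the difficulty has been front-loaded into the characterization results of the previous two sections, and this theorem is their first payoff.
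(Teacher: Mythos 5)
Your proposal is correct and follows exactly the paper's own argument: take $G$ to be the 4-cube and $H$ the Hoffman graph, apply Theorem~\ref{thm:1walkregpsd} (both are connected, $1$-walk-regular, and cospectral) to get $G \cong_\psd H$, and apply Theorem~\ref{thm:distregdnn} (the 4-cube is distance regular but the Hoffman graph is not) to rule out $G \cong_\dnn H$. No differences worth noting.
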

\proof
Let $G$ be the 4-cube and $H$ the Hoffman graph. Since $G$ is distance regular it is 1-walk-regular, and we already noted above that the Hoffman graph is 1-walk-regular. Since they are cospectral, by Theorem~\ref{thm:1walkregpsd} they are $\psd$-isomorphic. However, since $G$ is distance regular but $H$ is not, by Theorem~\ref{thm:distregdnn} they are not $\dnn$-isomorphic.\qeds


The next separation we will show is between quantum and $\dnn$-isomorphism. 
 The graphs we will use are the the cartesian product of $K_4$ with itself, and the Shrikhande graph. The cartesian product of graphs $G$ and $H$, denoted $G \square H$, has vertex set $V_G \times V_H$, and vertices $(g,h)$ and $(g',h')$ are adjacent if ($g = g'$ and $h \sim h'$) or ($g \sim g'$ and $h = h'$). For $G = H = K_4$, the vertices of the graph can be thought of as being in a $4 \times 4$ grid, such that two are adjacent if they are in the same row or column. From this description it is easy to see that $K_4 \square K_4$ contains $K_4$ as a subgraph.

The graph $K_4 \square K_4$ is what is known as a strongly regular graph, which is just a distance regular graph of diameter two. Equivalently, an $n$-vertex, $k$-regular graph $G$ is strongly regular if there exists numbers $\lambda$ and $\mu$ such that any two adjacent vertices of $G$ share $\lambda$ common neighbors, and any two distinct non-adjacent vertices of $G$ share $\mu$ common neighbors. The numbers $(n,k,\lambda, \mu)$ are called the parameters of a strongly regular graph and these completely determine its spectrum. The parameters of $K_4 \square K_4$ are $(16,6,2,2)$.

The Shrikhande graph is pictured in Figure~\ref{fig:Shrikhande}. The Shrikhande graph is also a strongly regular graph with parameters $(16,6,2,2)$, and thus it is cospectral to $K_4 \square K_4$. We will show that these two graphs are $\dnn$-isomorphic but not quantum isomorphic, thus separating these two relations.

\begin{figure}\label{fig:Shrikhande}
\begin{center}
\includegraphics[scale = .7]{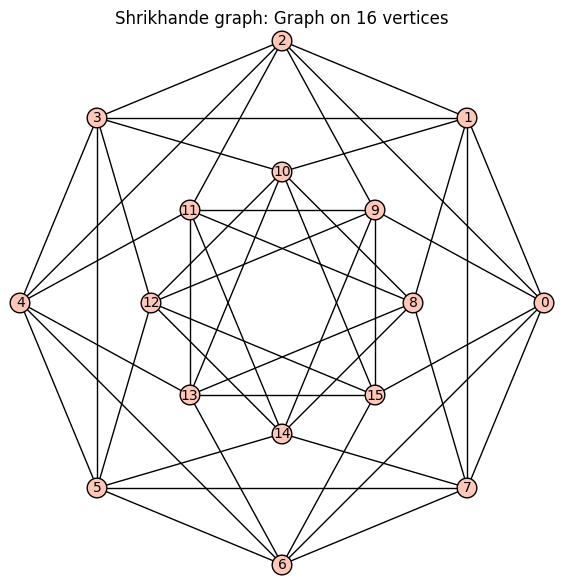}
\caption{Shrikhande graph.}
\end{center}
\end{figure}

\begin{theorem}\label{thm:qdnnsep}
There exist graphs $G$ and $H$ that are $\dnn$-isomorphic but not quantum isomorphic, i.e., $G \cong_\dnn H \ \not\Rightarrow \ G \cong_q H$.
\end{theorem}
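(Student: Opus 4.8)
The plan is to instantiate the theorem with $G = K_4 \square K_4$ (the $4\times 4$ rook's graph) and $H$ the Shrikhande graph, and to establish the two halves separately. As recorded in the preceding discussion, both are strongly regular with parameters $(16,6,2,2)$, hence distance regular (strongly regular graphs are precisely the distance regular graphs of diameter two) and cospectral, since the parameters of a strongly regular graph determine its spectrum. The $\dnn$-isomorphism is then immediate from Theorem~\ref{thm:distregdnn}: a distance regular graph is $\dnn$-isomorphic to $H$ exactly when $H$ is distance regular and cospectral to it, and both conditions hold here. So $G \cong_\dnn H$, and all that remains is the separation $G \not\cong_q H$.

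The substance of the argument, and its main obstacle, is that every invariant developed so far agrees on these two graphs: they are cospectral (so Lemmas~\ref{lem:psd2cospec} and~\ref{lem:laplacian} give nothing), and they are even $\dnn$-isomorphic, so they share the same coherent algebra and are indistinguishable by the Weisfeiler--Leman method. I therefore need an invariant that is genuinely sensitive to quantum isomorphism, and the natural separating feature is clique structure. Concretely, $K_4 \square K_4$ contains $K_4$ as a subgraph -- each of its four rows and four columns induces a copy of $K_4$, giving eight $4$-cliques in total and no others -- whereas the neighbourhood of every vertex of the Shrikhande graph induces a $6$-cycle, which is triangle free; hence the Shrikhande graph has clique number $3$ and contains no $K_4$.

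To convert this clique distinction into a proof of non-quantum-isomorphism I would invoke the result of two of the present authors that quantum isomorphic graphs have equal homomorphism counts from every planar graph. Since $K_4$ is planar and $\hom(K_4,F)$ counts ordered $4$-cliques of $F$, the eight $K_4$-subgraphs of $G$ yield $\hom(K_4, K_4 \square K_4) = 8 \cdot 4! = 192$, while $\hom(K_4, H) = 0$; as these counts differ, $G \not\cong_q H$. The delicate point -- and the reason a more elementary argument fails -- is that one cannot simply claim that quantum isomorphism preserves the classical clique number: composing the inclusion $K_4 \hookrightarrow G$ with a hypothetical quantum isomorphism only produces a quantum homomorphism from $K_4$ into $H$, and the quantum clique number can in principle strictly exceed the classical one. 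Thus the real content is to rule out even a \emph{quantum} $K_4$ inside the Shrikhande graph, which is exactly what the planar homomorphism-count theorem supplies. One could instead try to argue directly with the projectors $P_{gh}$ of Theorem~\ref{thm:qreform}: taking a row-clique $\{g_1,\dots,g_4\}$ of $G$, the identity $I = \prod_i \sum_{h} P_{g_ih}$ forces some product $P_{g_1h_1}\cdots P_{g_4h_4}$ to be nonzero, and consecutive nonvanishing would push the targets $h_i$ toward mutual adjacency in $H$. The reason I do not expect this to close cleanly -- and the precise incarnation of the obstacle above -- is that these projectors need not commute, so a single nonzero ordered product does not by itself certify a $4$-clique among the $h_i$; surmounting that gap is essentially equivalent to the quantum-clique statement, which is why appealing to the planar homomorphism-count characterization is the cleaner route.
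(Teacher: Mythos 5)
Your proposal is correct and follows essentially the same route as the paper: the same pair of graphs ($K_4 \square K_4$ and the Shrikhande graph), the same appeal to Theorem~\ref{thm:distregdnn} via their shared strongly regular parameters $(16,6,2,2)$ for the $\dnn$-isomorphism, and the same use of the planar homomorphism-counting result of~\cite{qplanar} applied to $K_4$ to rule out quantum isomorphism. Your remark that preservation of the classical clique number cannot be assumed directly is well taken and mirrors the paper's own observation that a more elementary argument would require the machinery of projective representations.
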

\proof
The Shrikhande graph and $K_4 \square K_4$ are strongly regular graphs with the same parameters and are thus cospectral. It then follows from Theorem~\ref{thm:distregdnn} that $G \cong_\dnn H$. It remains to show that these two graphs are not quantum isomorphic.

In~\cite{qplanar}, it is shown that if $G \cong_q H$ then $G$ and $H$ admit the same number of \emph{homomorphisms} from any planar graph $K$\footnote{In fact, it is shown that the latter is equivalent to \emph{quantum commuting isomorphism}, a relaxation of quantum isomorphism arising from considering a different model of joint measurements on shared quantum systems.}. A homomorphism from $K$ to $G$ is simply an adjacency-preserving map from $V_K$ to $V_G$. In the case of $K$ being a complete graph, the existence of a homomorphism from $K$ to $G$ is equivalent to $G$ containing $K$ as a subgraph. It is easy to see that the graph $K_4 \square K_4$ contains $K_4$ as a subgraph, whereas the Shrikhande graph does not. Thus the Shrikhande graph admits no homomorphisms from $K_4$, whereas $K_4 \square K_4$ admits some positive number of homomorphisms from $K_4$. As $K_4$ is planar, it follows that $K_4 \square K_4$ is not quantum isomorphic to the Shrikhande graph.\qeds

We remark that the result of~\cite{qplanar} used to prove the above theorem is a bit of overkill. In fact, the above can be proved more directly using the notion of \emph{projective representations} of graphs and~\cite[Lemma 5.20]{qiso}. However, this more direct proof is somewhat tedious and the above proof allows us to avoid introducing projective representations here.

The above results show that none of the implications in Equation~\eqref{eq:chain} can be reversed. Before moving on, we briefly mention that it is not too difficult to show that $G \cong_\psd H$ implies that $G$ and $H$ are \emph{fractionally isomorphic}, i.e., there exists a doubly stochastic matrix $D$ such that $A_GD = DA_H$. If $M$ is an $\psd$-isomorphism matrix for $G$ to $H$ then letting $D_{gh} = M_{gh,gh}$ works (the proof is similar to~\cite[Lemma 4.2]{qiso}). Again, the reverse implication does not hold, for instance the 6-cycle and the disjoint union of two 3-cycles are fractionally isomorphic, but not $\psd$-isomorphic since they are not cospectral.

\section{Conic theta functions}\label{subsec:conicthetas}


For any matrix cone $\K$, one can define the graph parameter~\cite{dukanovic2010copositive,laurent2015conic}:
\begin{equation*}\label{eqn:thetak}
\vartheta^\K(G) =  \begin{array}[t]{ll}
\sup & \tr(MJ) \\
\text{s.t.} & M_{g,g'} = 0 \text{ if } g \sim g' \\
 & \tr(M) = 1 \\
 & M \in \K. 
\end{array}
\end{equation*}
Note that $\tr(MJ)$ is equal to the sum of the entries of $M$, which we also denote by $\text{sum}(M)$. For $\K = \psd$, the parameter $\thetak$ is exactly  the celebrated  Lov\'{a}sz theta function, denoted simply $\vartheta$. For $\K = \dnn$, it is equal to a variant due to Schrijver that is usually denoted $\vartheta'$ or $\varthetam$.
A nontrivial result~\cite{Pasechnik02} is that $\vartheta^\cp$ is equal to the independence number of a graph, denoted $\alpha$. The parameter $\vartheta^\cpsd$ was first considered in~\cite{laurent2015conic}; it may or may not be equal to a related parameter known as the projective packing number~\cite{RobersonThesis}, but in general it is much less understood than the other three parameters discussed above. One of the main reasons for this is that the cone $\cpsd$ is not closed~\cite{slofstra17}.


Note that if $\K \subseteq \K'$ then $\thetak(G) \le \vartheta^{\K'}(G)$. In particular, we have the following chain of inequalities: 
\[\alpha(G) = \vartheta^\cp(G) \le \vartheta^\cpsd(G) \le \vartheta^\dnn(G) \le \vartheta^\psd(G) = \vartheta(G) \le \chi(\overline{G}).\]

In order to reformulate $\K$-isomorphism in terms of the graph parameter $\thetak$  we  make use of the  \emph{graph isomorphism product}, denoted $G \diamond H$, which has vertex set $V_G \times V_H$ and edges  $(g,h) \sim (g',h')$ if $\rel(g,g') \ne \rel(h,h')$.
In other words, vertices of $G \diamond H$ are adjacent exactly when the corresponding entry in an isomorphism matrix for $G$ to $H$ is required to be zero. Note that the isomorphism product of $G$ and $H$ is the complement of  the \emph{modular} product of graphs.

\begin{theorem}\label{thm:kiso2ktheta}
Consider two graphs  $G$ and $H$ and a matrix cone $\K \subseteq \psd$. Then $G \cong_\K H$ if and only if $\thetak(G \diamond H) = |V_G| = |V_H|$ and this value is attained. 
\end{theorem}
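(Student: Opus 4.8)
The plan is to exploit the fact that the feasibility constraints defining $\thetak(G \diamond H)$ are, apart from normalization, precisely the constraints on a $\K$-isomorphism matrix. Indeed, two vertices $(g,h),(g',h')$ are adjacent in $G \diamond H$ exactly when $\rel(g,g') \ne \rel(h,h')$, so the support constraint ``$M_{u,u'}=0$ whenever $u \sim u'$'' of the conic theta program coincides with the zero pattern~\eqref{zeros}, while the objective $\tr(MJ)$ is just $\mathrm{sum}(M)$, the sum of all entries of $M$. The engine of the whole argument is a block-sum observation. For any $M \succeq 0$ indexed by $V_G \times V_H$ obeying~\eqref{zeros}, form the $V_G \times V_G$ matrix $B$ with $B_{g,g'} = \sum_{h,h'} M_{gh,g'h'}$ and the analogous $V_H \times V_H$ matrix $C$. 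Then $B = P^\dagger M P$ for $P = I_{V_G}\otimes \one_{V_H}$, so $B \succeq 0$, and $\mathrm{sum}(B) = \mathrm{sum}(M) = \mathrm{sum}(C)$. Crucially, since~\eqref{zeros} forces $M_{gh,gh'}=0$ for $h\ne h'$ (as $\rel(g,g)\ne\rel(h,h')$), the diagonal blocks of $M$ are themselves diagonal, whence $\tr(B) = \tr(M) = \tr(C)$.

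For the forward direction I would start from a $\K$-isomorphism matrix $M$. Summing~\eqref{gsums} and~\eqref{hsums} over all indices gives $\mathrm{sum}(M) = |V_G|^2 = |V_H|^2$, so $|V_G|=|V_H|=:n$; using~\eqref{gsums} with $g=g'$ together with the vanishing off-diagonal $h$-blocks gives $\sum_h M_{gh,gh}=1$, hence $\tr(M)=n$. Rescaling, $M':=M/n$ lies in $\K$, has $\tr(M')=1$, obeys~\eqref{zeros}, and has $\mathrm{sum}(M')=n$, so it is feasible for $\thetak(G\diamond H)$ and attains the value $n$; hence $\thetak(G \diamond H)\ge n$ with the value attained. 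For the matching upper bound I invoke $\K\subseteq\psd$: for any feasible $M$ (so $\tr(M)=1$) the matrix $B$ above satisfies $\mathrm{sum}(M)=\one^\dagger B\one \le \lambda_{\max}(B)\,|V_G| \le \tr(B)\,|V_G| = |V_G|$, and symmetrically $\mathrm{sum}(M)\le|V_H|$; thus $\thetak(G\diamond H)\le n$, forcing equality.

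For the converse I would take an optimal $M'$ attaining $\thetak(G\diamond H)=|V_G|=|V_H|=:n$ and set $M:=nM'$, so that $M\in\K$ obeys~\eqref{zeros}, $\tr(M)=n$, and $\mathrm{sum}(M)=n^2$; it then remains only to verify~\eqref{gsums} and~\eqref{hsums}, i.e.\ that every block sum equals $1$. This is the crux. Here $B\succeq0$ has $\tr(B)=n$ and $\mathrm{sum}(B)=n^2$, so the chain $\one^\dagger B\one \le \lambda_{\max}(B)\,n \le \tr(B)\,n = n^2$ holds with equality throughout. Tracking the equality cases of this eigenvalue inequality forces $B$ to be rank one with $\one$ as its top eigenvector, i.e.\ $B = \tfrac{\tr(B)}{n}\,J = J$, and the same argument gives $C=J$. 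Thus all block sums equal $1$, so $M$ is a $\K$-isomorphism matrix and $G\cong_\K H$. The only genuinely delicate point is this extremality analysis --- squeezing the exact block-sum identities out of the single scalar optimum $\thetak(G\diamond H)=n$ --- and it is exactly where the hypothesis $\K\subseteq\psd$ is indispensable.
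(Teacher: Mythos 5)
Your argument is correct, and it agrees with the paper's proof in its two central steps: the rescaling of a $\K$-isomorphism matrix to a feasible solution of value $n$ (including the observation that \eqref{zeros} forces the diagonal blocks to be diagonal, so that $\tr(M)=n$), and, in the converse, the passage to the block-sum matrix $B=\widehat{M}$ together with the spectral extremality argument ($\tr(B)$ fixed, $\one^\dagger B\one$ maximal, $B\succeq 0$) forcing $B$ to be a multiple of $J$. Where you genuinely diverge is the upper bound $\thetak(G\diamond H)\le |V_G|$ in the forward direction: the paper gets this from the sandwich chain $\thetak\le\vartheta\le\chi(\overline{G\diamond H})\le |V_G|$, covering $\overline{G\diamond H}$ by the cliques $\{(g,h):h\in V_H\}$, whereas you rerun the block-sum spectral estimate $\mathrm{sum}(M)=\one^\dagger B\one\le\lambda_{\max}(B)\,|V_G|\le\tr(B)\,|V_G|=|V_G|$. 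Your version is more self-contained (it does not invoke $\vartheta\le\chi(\overline{\cdot})$) and has the aesthetic advantage that the inequality whose equality case you analyse in the converse is literally the same inequality that proves the upper bound; the paper's version is shorter if one takes the Lov\'asz sandwich bound as known, and makes the combinatorial source of the bound (a clique cover of $\overline{G\diamond H}$ by the fibres over $V_G$) more visible. Both are complete; no gaps.
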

\proof
First, note that $\thetak(G \diamond H) \le \vartheta(G \diamond H) \le \chi(\overline{G \diamond H})$. Also, the set $V_g = \{(g,h) : h \in V_H\}$ is an independent set in $\overline{G \diamond H}$ for all $g \in V_G$. Therefore, $\chi(\overline{G \diamond H}) \le |V_G|$, and similarly $\chi(\overline{G \diamond H}) \le |V_H|$. Thus to show that $\thetak(G \diamond H) = |V_G|$ (or $|V_H|$), it suffices to find a solution of value $|V_G|$ (or $|V_H|$).

Suppose that $G \cong_\K H$ and let be  $M$  the requisite $\K$-isomorphism matrix. We  show that $M$ is, up to a scalar, a feasible solution for $\thetak(G \diamond H)$ of value $|V_G| = |V_H|$. First, by conditions~(\ref{gsums}) and~(\ref{hsums}), we have that $\text{sum}(M)=|V_G|^2=|V_H|^2$, and thus $|V_G| = |V_H|$. Next, we have that $M_{gh,g'h'} = 0$ if $\rel(g,g') \ne \rel(h,h')$ by definition, and thus $M_{gh,g'h'} = 0$ if $(g,h) \sim (g',h')$ in $G \diamond H$. We also have that $M \in \K$. Lastly, it follows again by (\ref{hsums}) that 
\[\tr(M) = \sum_{g \in V_G, \ h \in V_H} M_{gh,gh} = \sum_{g \in V_G} \sum_{h,h' \in V_H} M_{gh,gh'} = |V_G|.\]
Thus, setting 
 $M' = M/\tr(M)$, it follows that  $\tr(M') = 1$ and $\text{sum}(M') = |V_G|$, i.e., $M'$ is a feasible solution for $\thetak(G \diamond H)$ of value $|V_G| = |V_H|$.

Conversely, let  $M$ be an optimal solution 
for $\thetak(G \diamond H)$ of value $|V_G| = |V_H|$.
 We show that the block sums 
\[\sum_{h,h' \in V_H} M_{gh,g'h'} \quad \& \quad \sum_{g,g' \in V_G} M_{gh,g'h'}\]
are all equal to the same constant,
and thus  $M$ is (a scalar multiple of) a $\K$-isomorphism matrix for $G$ to $H$.   
 For  this, define $\widehat{M}$ to be a matrix with rows and columns indexed by $V_G$, whose $g,g'$ entry is given by:
\[\widehat{M}_{g,g'} = \sum_{h,h' \in V_H} M_{gh,g'h'}.\]
First, note that as $M \in \psd$ we also  have that $\widehat{M} \in \psd$. Indeed, if $\{v_{gh}\}_{g \in V_G, h\in V_H}$ is a Gram decomposition of $M$, then $\{\sum_h v_{gh}\}_{g \in V_G}$ is a Gram decomposition for the matrix  $\widehat{M}$. Moreover, by definition  of $\widehat{M}$ we have that  $\text{sum}(\widehat{M}) = \text{sum}(M)$, and using that  $ \text{sum}(M) = |V_G|$
we get that 
\[\frac{e^T \widehat{M} e}{e^T e} = {\text{sum}(\widehat{M}) \over |V_G|}= 1.\]
Consequently,  the maximum eigenvalue of $\widehat{M}$ is at least 1. On the other hand, $$\tr(\widehat{M}) = \sum_{g\in V_G} \sum_{h,h' \in V_H} M_{gh,gh'}=\sum_{g\in V_G} \sum_{h, \in V_H} M_{gh,gh}=\tr(M) = 1,$$ 
where we used that  $M_{gh,gh'} = 0$ when $h \ne h'$. Thus,  the sum of the eigenvalues of $\widehat{M}$ is 1. Since $\widehat{M}$ is positive semidefinite, it must have exactly one nonzero eigenvalue, which must be equal to 1, and which has $e$ as an eigenvector. This implies that $\widehat{M}$ is a multiple of the all ones matrix. By the definition of $\widehat{M}$, this implies that the block sums $\sum_{h,h'} M_{gh,g'h'}$ are constant (and equal to $1/|V_G|$). 
 The same argument shows that the block sums $\sum_{g,g'} M_{gh,g'h'}$ are all equal to $1/|V_H| = 1/|V_G|$, and thus we are done.\qeds

 \section{Connection with the Lasserre hierarchy}\label{sec:las}
Consider a semialgebraic set $\mathcal{K}=\{x\in [0,1]^n: g_i(x)\ge 0, \  i=1,\ldots, m\}$. The Lasserre hierarchy is a systematic method for producing tighter approximations to ${\rm conv}(\mathcal{K}\cap \{0,1\}^n)$. As the goal is to characterize the convex hull of $\{0,1\}$  points in $\mathcal{K}$, using  that $x_i^2=x_i$, we may assume that 
$$g_i(x)=\sum_{K\subseteq [n]} g_i(K)\prod_{i\in K}x_i,$$
i.e., $g_i$ is multilinear and its  monomials are  indexed by subsets of $[n]$.   
For $t\ge 0$, the $t$-th level of the Lasserre hierarchy is  an SDP defined as the set of all vectors 
$y=(y_I), \ I\subseteq [n], |I|\le 2t, $ that satisfy:
$$ M_t(y)\succeq 0,  \quad   M_{t-\lceil {{\rm deg}(g_i)\over 2}\rceil }(g_i*y) \succeq 0  \ (i\in [m]), \quad  y_\emptyset =1.$$
For completeness, we recall that $M_t(y)$ is a matrix indexed  by all sets $I\subseteq [n]$ with $|I|\le t$ and its  $I, J$ entry is given by $y_{I\cup J}$. 
  Furthermore,  $ M_{t-\lceil {{\rm deg}(g_i)\over 2}\rceil }(g_i* y)$ is a matrix indexed by all $I\subseteq [n]$ with $|I|\le t-{{\rm deg}(g_i)\over 2}$ and its  $I,J$ entry is given by 
$\sum_{K\subseteq [n]}g_i(K)y_{I\cup J\cup K}.$

Deciding graph isomorphism is equivalent to the feasibility of the following  quadratic integer program:
 \begin{align}
&\label{new1} \sum_g X_{gh}=1,\\
&\sum_hX_{gh}=1,\\
&  \label{new3} X_{gh}X_{g'h'}=0 \text{ if } \rel(g,g')\ne \rel(h,h'),\\
& X_{gh}\in\{0,1\}, \ \forall g,h.
\end{align}

We proceed to apply the Lasserre hierarchy to the semi-algebraic set obtained  by dropping the integrality constraints. 
We only consider the first level (i.e., $t=1$),  defined in terms of  the  variables  
$$y_\emptyset, \quad y_{(g,h)},  \quad  y_{\{(g,h), (g',h')\}}.$$
The first constraint is that $M_1(y)\succeq 0$. This matrix is  indexed by $\emptyset, (g,h)$; The entries in the 0-th row are $y_{(g,h)}$  and the $(g,h), (g'h')$ entry is $y_{\{(g,h), (g',h')\}}$. Moreover,  
the diagonal is equal to the 0-th row. 
 To handle equality constraints in the Lasserre hierarchy we write them as two inequalities.  As an example consider  $\sum_h X_{gh}- 1\ge 0$. As this has degree one (and we consider level $t=1$), we get the constraint $M_0(g_i*y)\succeq  0$. This is a trivial matrix; is it only indexed by the empty set, so it is just the  scalar 
\be\label{cdfvrgb}
\sum_{K\subseteq [n]}g_i(K)y_{K}.
\ee
Specializing to the polynomial $\sum_h X_{gh}- 1\ge 0$, \eqref{cdfvrgb}  gives the constraint:
$$\sum_h y_{(gg,h)}-y_\emptyset =\sum_h y_{(g,h)}-1\ge 0,$$
whereas the polynomial $\sum_h X_{gh}- 1\le 0$ gives  the converse  inequality. Thus, the first level of the Lasserre hierarchy  includes the constraints
\be\label{xascdf}
\sum_h y_{(g,h)}=1,  \ \forall g,
\ee
and symetrically, it also includes 
$$\sum_g y_{(g,h)}=1, \ \forall h.$$
Finally, the constraints $X_{gh}X_{g'h'}=0 \text{ if } \rel(g,g')\ne \rel(h,h')$ translate to 
$$y_{(g,h),(g',h')} =0 \text{ if } \rel(g,g')\ne \rel(h,h').$$ 

\begin{lemma} $G\cong_\psd H$  if and only if the first level of the Lasserre hierarchy for graph isomorphism is feasible, i.e., there exists $y=(y_\emptyset,  y_{(g,h)},   y_{\{(g,h), (g',h')\}})$ such that 
\begin{align} 
& \label{lasfirst} M_1(y)\succeq 0,\\ 
&  \sum_h y_{(g,h)}=1,  \ \forall g, \\
& \sum_g y_{(g,h)}=1, \ \forall h,\\
& \label{lasrel} y_{(g,h),(g',h')} =0, \text{ if } \rel(g,g')\ne \rel(h,h'),\\
& \label{laslast} y_\emptyset=1.
\end{align}
Furthermore, $\dnn$-isomorphism is equivalent to the feasibility of \eqref{lasfirst}-\eqref{laslast} with the additional constraints that the variables are nonnegative.  
\end{lemma}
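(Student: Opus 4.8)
The plan is to exploit the block decomposition of the moment matrix $M_1(y) = \begin{pmatrix} 1 & z^{\top} \\ z & Y \end{pmatrix}$, where $z = (y_{(g,h)})$ is indexed by $V_G \times V_H$ and $Y = (y_{\{(g,h),(g',h')\}})$; recall that the identities $X_{gh}^2 = X_{gh}$ force the diagonal of $Y$ to coincide with $z$. The key observation is that $Y$ itself will be a $\psd$-isomorphism matrix for $G$ to $H$, and conversely every such matrix arises as the $Y$-block of a feasible Lasserre point. Since the constraints \eqref{lasfirst}--\eqref{laslast} impose only the single-index sums $\sum_h z_{(g,h)} = 1$ and $\sum_g z_{(g,h)} = 1$, whereas the isomorphism-matrix conditions \eqref{gsums}--\eqref{hsums} demand that the full block sums $\sum_{h,h'} Y_{(g,h),(g',h')}$ and $\sum_{g,g'} Y_{(g,h),(g',h')}$ be constant, the heart of the argument is a Cauchy--Schwarz rigidity step upgrading the former to the latter.

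For the backward direction, suppose $y$ is feasible. Writing $M_1(y) \succeq 0$ as a Gram matrix yields a unit vector $w_\emptyset$ and vectors $w_{gh}$ with $\langle w_\emptyset, w_{gh}\rangle = \langle w_{gh}, w_{gh}\rangle = z_{(g,h)}$ (the border equals the diagonal) and $\langle w_{gh}, w_{g'h'}\rangle = Y_{(g,h),(g',h')}$. Set $W_g = \sum_h w_{gh}$. Using the zero pattern \eqref{lasrel} (the entries $Y_{(g,h),(g,h')}$ vanish for $h \ne h'$ because $\rel(g,g) \ne \rel(h,h')$) together with $\sum_h z_{(g,h)} = 1$, one computes $\|W_g\|^2 = \sum_h z_{(g,h)} = 1$ and $\langle w_\emptyset, W_g\rangle = \sum_h z_{(g,h)} = 1$, so equality in Cauchy--Schwarz forces $W_g = w_\emptyset$ for every $g$. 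Hence $\sum_{h,h'} Y_{(g,h),(g',h')} = \langle W_g, W_{g'}\rangle = 1$ for all $g,g'$, which is \eqref{gsums}; the symmetric computation using $\sum_g z_{(g,h)} = 1$ gives \eqref{hsums}. As $Y \succeq 0$ (a principal submatrix of $M_1(y)$) and satisfies the zero pattern \eqref{zeros} by \eqref{lasrel}, it is a $\psd$-isomorphism matrix, so $G \cong_\psd H$.

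For the forward direction, let $M$ be a $\psd$-isomorphism matrix with Gram vectors $v_{gh}$. Condition \eqref{gsums} reads $\langle \sum_h v_{gh}, \sum_{h'} v_{g'h'}\rangle = 1$, so the same rigidity step shows that $w_\emptyset := \sum_h v_{gh}$ is a unit vector independent of $g$. Taking $Y = M$, $z = \mathrm{diag}(M)$, and $y_\emptyset = 1$, the zero pattern \eqref{zeros} kills the cross terms, giving $\langle w_\emptyset, v_{gh}\rangle = \sum_{h'} M_{gh',gh} = M_{gh,gh} = z_{(g,h)}$; thus $M_1(y)$ is precisely the Gram matrix of $\{w_\emptyset\} \cup \{v_{gh}\}$ and is therefore positive semidefinite, the single-index sums follow from \eqref{gsums}--\eqref{hsums} and the zero pattern, and \eqref{lasrel} is just \eqref{zeros}. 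The $\dnn$ statement is obtained by overlaying entrywise nonnegativity: a $\dnn$-isomorphism matrix has $Y = M$ and $z = \mathrm{diag}(M)$ entrywise nonnegative, so all variables $y$ are nonnegative, and conversely nonnegativity of the variables makes $Y$ entrywise nonnegative in addition to positive semidefinite, i.e. doubly nonnegative, so the preceding arguments then produce a $\dnn$-isomorphism matrix. The main obstacle is the Cauchy--Schwarz rigidity step that recovers the block-sum constraints from the single-index ones; the remaining verifications are routine bookkeeping with the zero pattern \eqref{zeros}.
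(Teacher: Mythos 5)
Your proof is correct and follows essentially the same route as the paper's: both Gram-decompose $M_1(y)$, use the zero pattern \eqref{lasrel} to compute $\bigl\|\sum_h v_{(g,h)}\bigr\|^2 = \bigl\langle v_\emptyset, \sum_h v_{(g,h)}\bigr\rangle = 1$ and conclude $\sum_h v_{(g,h)} = v_\emptyset$ (the paper expands $\|v_\emptyset - \sum_h v_{(g,h)}\|^2 = 0$, which is exactly the rigidity your Cauchy--Schwarz step delivers), and then read off the block sums \eqref{gsums}--\eqref{hsums} as $\langle v_\emptyset, v_\emptyset\rangle$. The converse likewise augments the $\psd$-isomorphism matrix with the common vector $\sum_h v_{(g,h)}$ as the $\emptyset$-row, so there is no gap.
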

\begin{proof}Let $y=(y_\emptyset,  y_{(g,h)},   y_{\{(g,h), (g',h')\}})$ feasible for \eqref{lasfirst}-\eqref{laslast}. As $M_1(y)$ is positive semidefinite, it can be realized as the Gram  matrix of  a family of vectors $v_\emptyset $ and $v_{(g,h)}, g\in V_G, h\in V_H$. The main step is to show  that $v_\emptyset=\sum_g v_{(g,h)}=\sum_h v_{(g,h)}$. Towards  this end,   note that 
$$\la \sum_h v_{(g,h)}, \sum_h v_{(g,h)}\ra =\sum_h \la v_{(g,h)}, v_{(g,h)}\ra =\sum_h y_{(g,h)}=1,$$
where the first equality follows from \eqref{lasrel}, and 
$$\la v_\emptyset,\sum_h  v_{(g,h)}\ra =\sum_h y_{(g,h)}=1.$$
Combining the above  with $y_\emptyset=1$ we get that 
$$\|v_\emptyset -\sum_h  v_{(g,h)}\|^2=0, \ \forall g,$$
and analogously  that  $\|v_\emptyset -\sum_g v_{(g,h)}\|^2=0, \ \forall h.$ Lastly, it follows that the restriction of $M_1(y)$ on the rows/columns indexed by $(g,h)$ is a $\psd$-isomorphism matrix as  
$$\sum_{h,h' \in V_H} M_1(y)_{gh,g'h'} =\sum_{h,h'\in V_H}\la v_{(g,h)}, v_{(g',h')}\ra = \la v_\emptyset, v_\emptyset\ra =1, \text{ for all } g,g' \in V_G,$$
and similarly $\sum_{g,g'} M_1(y)_{gh,g'h'} = 1$ for all $h,h' \in V_H$.

Conversely, let $M$ be a $\psd$-isomorphism matrix and let $v_{(g,h)}$ be the vectors in a Gram decomposition. For any $g\in V_G$ set $v_g=\sum_h v_{(g,h)}$ and for any $h\in V_H$ define $v_h=\sum_g v_{(g,h)}$. 
As before, we can easily show that  $v_g=v_{g'}$ for all $g,g'\in V_G$ and $v_h=v_{h'}$ for all $h,h'\in V_H$, and thus we use $v_G$ and $v_H$ to refer to these two vectors. But we also have that
$$|V_G|v_{G}=\sum_g \sum_h  v_{(g,h)}= \sum_h \sum_g  v_{(g,h)}=|V_H|v_{H},$$
which in turn implies that $v_G=v_H=:v$ (as $\psd$-isomorphic graphs have the same number of vertices). Lastly, extend $M$ be adding an extra row/column indexed by the vector $v$. It is then straightforward to check that the augmented matrix    satisfies \eqref{lasfirst}-\eqref{laslast}. 
\end{proof}

Combining several results of this paper, we have the following:

\begin{theorem}\label{thm:dnnsummary}
Let $G$ and $H$ be graphs. Then the following are equivalent:
\begin{enumerate}
\item[$(1)$] $G \cong_\psd H$.
\item[$(2)$] $G$ and $H$ are equivalent.
\item[$(3)$] $\vartheta'(G \diamond H) = |V_G| = |V_H|$.
\item[$(4)$] The first level of the Lasserre hierarchy for isomorphism of $G$ and $H$ has a nonnegative solution.
\end{enumerate}
\end{theorem}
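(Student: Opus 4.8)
The plan is to recognize that conditions (2), (3), and (4) are each, by results already established, equivalent to $G \cong_\dnn H$, and then to confront condition (1) as it is literally written. First I would dispatch the equivalence $(2) \Leftrightarrow (3) \Leftrightarrow (4)$. By Theorem~\ref{thm:characterization}, $G$ and $H$ are equivalent if and only if $G \cong_\dnn H$, giving $(2) \Leftrightarrow \, G \cong_\dnn H$. Since $\vartheta' = \vartheta^\dnn$, applying Theorem~\ref{thm:kiso2ktheta} with $\K = \dnn$ (legitimate as $\dnn \subseteq \psd$) yields $G \cong_\dnn H$ iff $\vartheta^\dnn(G \diamond H) = |V_G| = |V_H|$ with the value attained. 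Because $\dnn$ is a closed cone and the feasible region $\{M \in \dnn : \tr(M) = 1,\ M_{g,g'} = 0 \text{ for } g \sim g'\}$ is compact, the supremum defining $\vartheta^\dnn$ is automatically attained whenever finite, so the attainment clause is vacuous here and $(3) \Leftrightarrow \, G \cong_\dnn H$. Finally, the lemma immediately preceding this theorem states that the first level of the Lasserre hierarchy with the added nonnegativity constraints is feasible iff $G \cong_\dnn H$, so $(4) \Leftrightarrow \, G \cong_\dnn H$. Chaining these yields $(2) \Leftrightarrow (3) \Leftrightarrow (4)$.

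The crux is condition (1) as worded, namely $G \cong_\psd H$. The chain of implications~\eqref{eq:chain} gives $G \cong_\dnn H \Rightarrow G \cong_\psd H$, so each of (2), (3), (4) implies (1). The converse, however, fails: by Theorem~\ref{thm:psdnotdnn} the $4$-cube and the Hoffman graph are $\psd$-isomorphic but not $\dnn$-isomorphic, hence (by Theorem~\ref{thm:characterization}) not equivalent. Thus condition (1), read as $G \cong_\psd H$, is strictly weaker than the remaining three and cannot be equivalent to them. I therefore conclude that condition (1) as printed is a typo: for all four statements to be equivalent it must read $G \cong_\dnn H$. Under that correction $(1) \Leftrightarrow (2)$ is precisely Theorem~\ref{thm:characterization}, and the equivalence of all four assertions follows by concatenation with the first paragraph.

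The main obstacle here is not any calculation but this consistency check: the genuine mathematical content of the theorem is already distributed across Theorems~\ref{thm:characterization} and~\ref{thm:kiso2ktheta} and the Lasserre lemma, so the proof reduces to their assembly \emph{after} correcting condition (1) to $\cong_\dnn$. The only technical subtlety worth flagging is the attainment requirement in (3), which is discharged by the closedness and compactness argument above (and which would genuinely fail for $\cpsd$, an unclosed cone). The only real pitfall is to silently treat $\cong_\psd$ as equivalent to $\cong_\dnn$, a step that the separation result of Theorem~\ref{thm:psdnotdnn} explicitly forbids.
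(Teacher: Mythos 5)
Your assembly is correct and is essentially the proof the paper intends: the paper gives no argument beyond ``combining several results,'' meaning precisely the chain you construct, namely Theorem~\ref{thm:characterization} for item (2), Theorem~\ref{thm:kiso2ktheta} with $\K = \dnn$ (together with $\vartheta' = \vartheta^\dnn$) for item (3), and the preceding Lasserre lemma for item (4), each characterizing $G \cong_\dnn H$. Your diagnosis of condition (1) is also right and is the substantive point: as printed, $G \cong_\psd H$ is strictly weaker than the other three conditions, since Theorem~\ref{thm:psdnotdnn} (the $4$-cube versus the Hoffman graph) separates $\cong_\psd$ from $\cong_\dnn$, and the theorem's role as a summary of the Weisfeiler--Leman/coherent-algebra connection (cf.\ Remark~\ref{rem:dnnsummary}) confirms that (1) must read $G \cong_\dnn H$; so the statement contains a typo, and your correction is forced. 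Your compactness argument discharging the attainment clause in (3) --- closedness of $\dnn$ plus boundedness of the trace-one slice, using $|M_{ij}| \le \sqrt{M_{ii}M_{jj}} \le 1$ for psd $M$ --- is a worthwhile explicit step that the paper silently assumes when it drops the ``value is attained'' proviso of Theorem~\ref{thm:kiso2ktheta} in passing to item (3), and you are right that this step would genuinely fail for the non-closed cone $\cpsd$.
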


\begin{remark}\label{rem:dnnsummary}
As mentioned in Section~\ref{subsec:cohalggraphs}, graphs $G$ and $H$ are equivalent if and only if they are not distinguished by the Weisfeiler-Leman method. In turn, the latter is known to have many equivalent formulations, e.g., in terms of logic and pebbling games on graphs~\cite{CFI}. However, the connections to Schrijver's theta function and the first level of the Lasserre hierarchy given in the above theorem appear to be new.
\end{remark}

\appendix

\section{Tools from algebra}

For our characterizations of $\dnn$- and $\psd$-isomorphism, we will need some tools from algebra. The first result was used in~\cite{friedland} and says that certain isomorphisms between matrix algebras can be realized as conjugation by a unitary matrix. 

\begin{lemma}\label{lem:unitary}
If $\A$ and $\B$ are self-adjoint unital subalgebras of $\C^{n \times n}$ and $\phi: \A \to \B$ is a trace-preserving isomorphism such that $\phi(X^\dagger) = \phi(X)^\dagger$ for all $X \in \A$, then there exists a unitary matrix $U \in \C^{n \times n}$ such~that
\[\phi(X) = UXU^\dagger \text{ for all } X \in \A.\]
\end{lemma}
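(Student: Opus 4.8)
The plan is to recast the statement as the unitary equivalence of two $*$-representations of $\A$ on $\C^n$ and then invoke the structure theory of finite-dimensional $C^*$-algebras. Since $\A$ is a self-adjoint unital subalgebra of $\C^{n\times n}$, the inclusion $\rho_1\colon \A \hookrightarrow \C^{n\times n}$ is a unital $*$-homomorphism, i.e., a $*$-representation of $\A$ on $\C^n$. Likewise, regarding $\phi$ as a map into $\C^{n\times n}$ via the inclusion $\B \subseteq \C^{n\times n}$ yields a second map $\rho_2 := \phi\colon \A \to \C^{n\times n}$; it is again a unital $*$-homomorphism because $\phi$ is a unital algebra isomorphism satisfying $\phi(X^\dagger) = \phi(X)^\dagger$. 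Producing the desired $U$ then amounts to finding a unitary with $U\rho_1(X)U^\dagger = \rho_2(X)$ for all $X \in \A$, that is, to showing that $\rho_1$ and $\rho_2$ are unitarily equivalent.

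First I would recall the relevant representation theory. By the Artin--Wedderburn theorem, a finite-dimensional $C^*$-algebra is $*$-isomorphic to a direct sum $\bigoplus_k M_{n_k}(\C)$ of full matrix algebras, and consequently every $*$-representation of $\A$ on a finite-dimensional space decomposes, up to unitary conjugation, as a direct sum of the irreducible representations $\pi_k$ (one per simple summand) with certain multiplicities. Two such representations are unitarily equivalent if and only if these multiplicities agree for every $k$. The key point is that the multiplicities can be read off from the character $X \mapsto \tr(\rho(X))$: one has $\tr(\rho(X)) = \sum_k \mathrm{mult}_k(\rho)\,\chi_k(X)$, where $\chi_k$ is the character of $\pi_k$, and the functionals $\chi_k$ are linearly independent on $\A$. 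Hence the linear functional $X \mapsto \tr(\rho(X))$ determines $\rho$ up to unitary equivalence.

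It then remains only to check that $\rho_1$ and $\rho_2$ have the same character, and this is exactly where the trace-preserving hypothesis enters. For every $X \in \A$ we have $\tr(\rho_1(X)) = \tr(X)$ and $\tr(\rho_2(X)) = \tr(\phi(X)) = \tr(X)$, the last equality holding by assumption. Thus the two characters coincide, so $\rho_1$ and $\rho_2$ are unitarily equivalent, which furnishes a unitary $U$ with $\phi(X) = UXU^\dagger$ for all $X \in \A$. The main obstacle is the representation-theoretic input of the preceding paragraph, namely the Artin--Wedderburn decomposition together with the linear independence of irreducible characters; this is standard but indispensable. It is worth stressing that trace-preservation cannot be dropped: for instance the $*$-isomorphic algebras $\{\mathrm{diag}(a,a,b): a,b \in \C\}$ and $\{\mathrm{diag}(a,b,b): a,b \in \C\}$ in $\C^{3\times 3}$ are not unitarily conjugate, precisely because an isomorphism matching their simple summands alters the dimensions of the image projections and hence fails to preserve the trace.
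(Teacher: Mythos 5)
Your proof is correct. The paper itself gives no proof of Lemma~\ref{lem:unitary}; it defers to the reference~\cite{friedland}, and the argument you give is precisely the standard one for this fact: view the inclusion $\A \hookrightarrow \C^{n\times n}$ and $\phi$ (composed with $\B \hookrightarrow \C^{n\times n}$) as two $*$-representations of the finite-dimensional $C^*$-algebra $\A$ on $\C^n$, note that unitary equivalence classes of such representations are determined by multiplicities, and that multiplicities are read off from the character via linear independence of the irreducible characters, so that trace-preservation forces the two representations to be unitarily equivalent. The only point worth making explicit is that both representations are unital --- for $\rho_2$ this is because $\phi$ carries the unit of $\A$ to the unit of $\B$, which must equal $I_n$ since $\B$ contains $I_n$ --- so neither representation has a null summand and the character genuinely determines the equivalence class on all of $\C^n$. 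Your closing example showing that trace-preservation cannot be dropped is also apt.
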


\begin{lemma}\label{lem:01basis}
Suppose that $G$ is a graph with partially coherent algebra $\PA_G$, and let $\mathcal{D} = \{I \schur X : X \in \PA\}$. Then, $\mathcal{D}$ is a subalgebra  of $\PA_G$ and there exists an orthogonal basis of $\mathcal{D}$ consisting of diagonal 01 matrices.
\end{lemma}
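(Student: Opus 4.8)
The plan is to treat diagonal matrices as functions on the index set $V_G$, so that ordinary matrix multiplication of diagonal matrices becomes pointwise (equivalently Schur) multiplication, and to handle the two assertions in turn.

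First I would check that $\mathcal{D}$ is a unital subalgebra of $\PA_G$. Since $I \in \{I, A_G\}$ and $\PA_G$ is closed under Schur multiplication by these matrices, every $I \schur X$ lies in $\PA_G$, so $\mathcal{D} \subseteq \PA_G$. Each element of $\mathcal{D}$ is diagonal, and the product of two diagonal matrices is again diagonal; if $D_1, D_2 \in \mathcal{D} \subseteq \PA_G$ then $D_1 D_2 \in \PA_G$ because $\PA_G$ is an algebra, and being diagonal it satisfies $D_1 D_2 = I \schur (D_1 D_2)$, whence $D_1 D_2 \in \mathcal{D}$. Together with $I = I \schur I \in \mathcal{D}$ and the linearity of $X \mapsto I \schur X$, this shows $\mathcal{D}$ is a unital subalgebra.

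For the orthogonal $01$ basis, I would introduce the partition of $V_G$ induced by $\mathcal{D}$: declare $i \equiv j$ precisely when $D_{ii} = D_{jj}$ for every $D \in \mathcal{D}$, and let $B_1, \ldots, B_k$ be the resulting blocks. By construction every $D \in \mathcal{D}$ is constant on each block, so sending $D$ to its tuple of block-values embeds $\mathcal{D}$ as a subalgebra $\mathcal{A}$ of $\C^k$ (with pointwise operations) that is unital, since it contains the image of $I$ (the all-ones tuple), and that separates the points $1, \ldots, k$, since two distinct blocks are separated by some $D$ (otherwise they would have been merged).

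The key step, and the only nonroutine point, is to show that the indicator diagonal matrices $P_1, \ldots, P_k$ of the blocks lie in $\mathcal{D}$; equivalently, that the point-separating unital subalgebra $\mathcal{A} \subseteq \C^k$ is all of $\C^k$. I would prove this by a finite Stone--Weierstrass argument: for each ordered pair $r \ne s$ pick $f^{(r,s)} \in \mathcal{A}$ with $f^{(r,s)}(r) \ne f^{(r,s)}(s)$, and rescale it, using that $\mathcal{A}$ contains the constants and is closed under affine combinations, to an element $g^{(r,s)} \in \mathcal{A}$ equal to $1$ at $r$ and $0$ at $s$; the product $\prod_{s \ne r} g^{(r,s)}$ then lies in $\mathcal{A}$ and equals the $r$-th indicator $e_r$. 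Pulling back, $P_r \in \mathcal{D}$ for each $r$. Since the $P_r$ have pairwise disjoint supports they are linearly independent and mutually orthogonal in the Hilbert--Schmidt inner product, and since each $D \in \mathcal{D}$ is constant on blocks we may write $D$ as a linear combination of the $P_r$, so they also span $\mathcal{D}$. Hence $\{P_1, \ldots, P_k\}$ is an orthogonal basis of $\mathcal{D}$ consisting of diagonal $01$ matrices, as required.
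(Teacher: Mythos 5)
Your proof is correct. The first half (that $\mathcal{D}$ is a unital subalgebra of $\PA_G$) matches the paper's argument essentially verbatim. For the $01$ basis, both you and the paper exploit the same underlying fact --- a commutative unital algebra of diagonal matrices, being closed under products, contains the indicators of the level sets of its elements --- but you organize the extraction differently. The paper fixes a single $D = \sum_i \alpha_i I_i$ with distinct values $\alpha_i$ and recovers each level-set indicator by Lagrange interpolation, $I_i = \prod_{j\ne i}\frac{1}{\alpha_i-\alpha_j}(D-\alpha_j I)$, and then merely asserts that ``from here it is easy to see'' that one obtains a disjointly supported spanning family. You instead pass directly to the finest partition of the index set induced by the whole algebra and build each block indicator as a product $\prod_{s\ne r}g^{(r,s)}$ of normalized separating elements, i.e.\ a finite Stone--Weierstrass argument. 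The two are morally the same computation (products of affine expressions in elements of $\mathcal{D}$), but your version has the advantage of making fully explicit the final refinement-and-spanning step that the paper leaves to the reader, at the cost of juggling several elements of $\mathcal{D}$ at once rather than one at a time. No gaps: the normalization $g^{(r,s)}=(f^{(r,s)}-f^{(r,s)}(s)\cdot 1)/(f^{(r,s)}(r)-f^{(r,s)}(s))$ is legitimate because $\mathcal{A}$ is a unital subspace, and orthogonality and spanning of the block indicators follow as you say.
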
 
\begin{proof}

By definition  $\PA_G$ is closed under Schur product with $I$, and thus  $\mathcal{D} \subseteq \PA$. Since $\PA$ is a vector space, and $X \mapsto I \schur X$ is the projection onto the vector space of diagonal matrices, we have that $\mathcal{D}$ is a subspace of $\PA_G$, and that $I \in \mathcal{D}$. Also, since $\PA_G$ is closed under matrix product and the product of any two diagonal matrices is diagonal, we have that $\mathcal{D}$ is closed under matrix product. 

For any matrix  $D \in \mathcal{D}$  there exist distinct $\alpha_1, \ldots, \alpha_k \in \C$ such that $D = \sum_i \alpha_i I_i$ where $I_1, \ldots, I_k$ are~01 diagonal matrices with distinct nonzero entries. It remains to show  that all  matrices $I_i$ lie in $\mathcal{D}$. For this, note that for any  $i \in [k]$ we have that 
\[I_i = \prod_{j \ne i} \frac{1}{\alpha_i - \alpha_j}(D - \alpha_j I),\]
which shows that $I_i\in \mathcal{D}$ as $\mathcal{D}$ is  a subalgebra of $\PA_G$ with  $I,D\in \mathcal{D}$. From here it is easy to see that there exists a set of 01 diagonal matrices of $\mathcal{D}$ whose nonzero entries are disjoint and who span $\mathcal{D}$. This is our desired basis of $\mathcal{D}$.
\end{proof} 

%

In order to use Lemma~\ref{lem:unitary}, we need to prove that equivalences and partial equivalences are trace-preserving. This was done for equivalences in~\cite{friedland}, and their proof can be used to prove the same for partial equivalences, which we do here.

\begin{lemma}\label{pequiv_trace_pres}
Suppose that $G$ and $H$ are graphs with partially coherent algebras $\PA_G$ and $\PA_H$ respectively. If $\phi: \PA_G \to \PA_H$ is a partial equivalence of $G$ and $H$, then $\phi$ is trace-preserving.
\end{lemma}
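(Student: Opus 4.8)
The plan is to exploit the fact that conjugating a diagonal matrix by the all-ones matrix $J$ reads off its trace, together with the two structural properties of $\phi$ that survive the passage from coherent to partially coherent algebras: multiplicativity and compatibility with the Schur product by $I$. The key elementary identity is that for any diagonal matrix $D \in \C^{n\times n}$ one has $JDJ = \tr(D)\, J$, which follows from the direct computation $(JDJ)_{ab} = \sum_c J_{ac} D_{cc} J_{cb} = \sum_c D_{cc} = \tr(D)$ for every $a,b$.

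First I would fix $X \in \PA_G$ and record that $\tr(X) = \tr(I \schur X)$, since the trace depends only on the diagonal, and that $I \schur X$ is a diagonal matrix lying in $\PA_G$ (as $\PA_G$ is closed under Schur multiplication by $I$). Applying the identity above to $D = I \schur X$ gives
\[ J\,(I \schur X)\, J = \tr(X)\, J. \]
Both sides lie in $\PA_G$: the left-hand side is a product of $J,\ I \schur X,\ J \in \PA_G$ and $\PA_G$ is an algebra, while the right-hand side is a scalar multiple of $J \in \PA_G$.

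Next I would apply $\phi$ to both sides and use its defining properties. Using multiplicativity (Condition~(2)) and $\phi(J) = J$ (Condition~(3)), the left-hand side becomes $J\,\phi(I \schur X)\, J$, and invoking Condition~(4) with $M = I$ gives $\phi(I \schur X) = I \schur \phi(X)$; hence the left-hand side equals $J\,(I \schur \phi(X))\, J = \tr(\phi(X))\, J$ by the same diagonal identity applied to $I \schur \phi(X)$. Meanwhile $\phi$ applied to the right-hand side yields $\tr(X)\,\phi(J) = \tr(X)\, J$. Comparing the two, $\tr(\phi(X))\, J = \tr(X)\, J$, and since $J \neq 0$ we conclude $\tr(\phi(X)) = \tr(X)$, as desired.

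There is no serious obstacle here; the only point requiring care is the bookkeeping that every matrix to which $\phi$ is applied genuinely lies in $\PA_G$, which holds because $\PA_G$ is a unital algebra containing $J$ and closed under Schur multiplication by $I$. I note that this argument uses neither self-adjointness (Condition~(1)) nor $\phi(A_G) = A_H$, and it applies verbatim to equivalences of coherent algebras, since those also satisfy $\phi(J) = J$ and $\phi(I \schur N) = I \schur \phi(N)$; thus it simultaneously recovers the trace-preservation of equivalences established in~\cite{friedland}. Alternatively, one could decompose $I \schur X$ in the $01$-diagonal basis of $\mathcal{D} = \{I \schur X : X \in \PA_G\}$ furnished by Lemma~\ref{lem:01basis} and argue cellwise, but the $JDJ$ identity makes this detour unnecessary.
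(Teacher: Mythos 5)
Your proof is correct, and it takes a genuinely different and more economical route than the paper's. The paper first reduces to the diagonal subalgebra $\mathcal{D} = \{I \schur X : X \in \PA_G\}$, invokes Lemma~\ref{lem:01basis} to get an orthogonal basis $I_1,\dots,I_d$ of $01$ diagonal matrices, shows via idempotency that each $\phi(I_i)$ is again a $01$ diagonal matrix, and then extracts the traces $n_i = \tr(I_i)$ and $n_i' = \tr(\phi(I_i))$ from the relations $J_i^2 = n_i J_i$ and $\phi(J_i)^2 = n_i'\phi(J_i)$ for $J_i := I_iJI_i$ (a step that quietly needs $\phi(J_i)\neq 0$, i.e., injectivity of $\phi$). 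Your identity $JDJ = \tr(D)\,J$ for diagonal $D$ is the same mechanism in disguise --- indeed $J_i^2 = I_i(JI_iJ)I_i = \tr(I_i)\,J_i$ --- but applying it globally to $D = I\schur X$ collapses the whole argument into one line: you need neither the $01$-basis of $\mathcal{D}$, nor the idempotency bookkeeping, nor injectivity, only linearity, multiplicativity, $\phi(J)=J$, and $\phi(I\schur X)=I\schur\phi(X)$. Your closing observations are also accurate: self-adjointness and $\phi(A_G)=A_H$ are not used, and the same computation gives trace-preservation for equivalences of coherent algebras. The one thing the paper's longer route buys is that the basis $I_1,\dots,I_d$ and the fact that $\phi$ permutes such $01$ idempotents are reused elsewhere in spirit (e.g., in the structure theory of $\A_G$), but for the lemma as stated your argument is a clean improvement.
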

\begin{proof}
For  any $X \in \PA_G$ and we have that $ \tr(\phi(X)) = \tr(I \schur \phi(X)) = \tr(\phi(I \schur X))$. 
Thus it suffices to show that $\phi$ is trace-preserving on the  subalgebra  $\mathcal{D} = \{I \schur X : X \in \PA_G\}$. By Lemma~\ref{lem:01basis}, there exist  diagonal 01 matrices $I_1, \ldots, I_d$ that form an orthogonal basis of $\mathcal{D}$. By linearity, it suffices to show that  $\phi$ preserves the trace of each individual  $I_i$.

First, since $\phi$ is a partial equivalence, we have by definition that
\[\phi(I_i) = \phi(I \schur I_i) = I \schur \phi(I_i),\]  
and therefore we have that $\phi(I_i)$ is diagonal for all $i$. Moreover, since $I_i^2 = I_i$ for all $i \in [d]$, we have that
\[\phi(I_i)^2 = \phi(I_i^2) = \phi(I_i),\]
and therefore $\phi(I_i)$ is a 01 diagonal matrix for all $i \in [d]$. Let $n_i = \tr(I_i)$ be the number of 1's in $I_i$, and let $n'_i = \tr(\phi(I_i))$ be the number of 1's in $\phi(I_i)$. We aim to show that $n'_i = n_i$.

Recall that $J \in \PA_G$, and thus $J_i := I_i J I_i \in \PA_G$ for all $i \in [d]$. Let $J'_i := \phi(J_i) = \phi(I_i)J\phi(I_i)$. It is easy to see that  $J_i^2 = n_i J_i$, and similarly $(J'_i)^2 = n'_i J'_i$. Therefore,
\[\phi(J_i)^2 = \phi(J_i^2) = \phi(n_i J_i) = n_i \phi(J_i).\]
However, we also have that
\[\phi(J_i)^2 = (J'_i)^2 = n'_i J'_i = n'_i \phi(J_i).\]
Of course this implies that $n'_i = n_i$ and we are done.
\end{proof}

Lastly, we will need to use the fact that the orthogonal projection onto a unital self-adjoint algebra is a completely positive map, see \cite[Theorems 1.5.10 and 1.5.11]{ozawa}.

\begin{lemma}\label{lem:algproj}
Let $\A$ be a self-adjoint subalgebra of $\C^{n \times n}$ containing the identity. If $\Pi$ is the orthogonal projection onto $\A$, then $\Pi$ is a CPTP unital map.
\end{lemma}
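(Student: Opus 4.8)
The plan is to establish the three defining properties of a CPTP unital map — unitality, trace-preservation, and complete positivity — in turn, handling the first two directly from the orthogonality characterization of $\Pi$ and reducing the third to the cited fact about conditional expectations.

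First I would record the elementary properties of $\Pi$ as a Hilbert--Schmidt orthogonal projection: with respect to the inner product $\langle X, Y\rangle = \tr(X^\dagger Y)$ it is self-adjoint, $\Pi^\dagger = \Pi$, it fixes $\A$ pointwise, and its range is $\A$. Since $I \in \A$ by hypothesis we get $\Pi(I) = I$, so $\Pi$ is unital. Trace-preservation then follows from the short computation
$$\tr(\Pi(X)) = \langle I, \Pi(X)\rangle = \langle \Pi(I), X\rangle = \langle I, X\rangle = \tr(X),$$
using self-adjointness of $\Pi$ and $\Pi(I) = I$.

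The crux of the argument is complete positivity, and here my plan is to identify $\Pi$ as a \emph{conditional expectation} onto $\A$, i.e.\ an $\A$-bimodule map satisfying $\Pi(AXB) = A\,\Pi(X)\,B$ for all $A,B \in \A$ and $X \in \C^{n\times n}$. It suffices to verify $\Pi(AX) = A\,\Pi(X)$, the right-hand version being symmetric. Since $\A$ is an algebra, $A\,\Pi(X) \in \A$, and since $\A$ is self-adjoint, $A^\dagger Y \in \A$ for every $Y \in \A$; hence
$$\langle Y,\, AX - A\,\Pi(X)\rangle = \langle A^\dagger Y,\, X - \Pi(X)\rangle = 0$$
by the orthogonality property defining $\Pi$. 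Thus $A\,\Pi(X)$ is the projection of $AX$ onto $\A$, giving $\Pi(AX) = A\,\Pi(X)$. With $\Pi$ exhibited as the (trace-preserving) conditional expectation onto the unital self-adjoint --- hence $C^*$ --- subalgebra $\A$, complete positivity is exactly the content of \cite[Theorems 1.5.10 and 1.5.11]{ozawa}.

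I expect the only genuinely delicate point to be the identification of the abstract Hilbert--Schmidt orthogonal projection with a conditional expectation in the operator-algebraic sense, which is precisely what the bimodule computation supplies; everything else is routine. Should a self-contained argument be preferred over the citation, an alternative is to conjugate $\A$ by a unitary into its Wedderburn block form $\bigoplus_k M_{d_k}(\C)\otimes I_{m_k}$ and check directly that $\Pi$ acts as $X \mapsto \bigoplus_k \tfrac{1}{m_k}\,\tr_{m_k}(P_k X P_k)\otimes I_{m_k}$ --- a composition of a pinching, partial traces, and tensoring with normalized identities, each of which is manifestly completely positive --- from which unitality and trace-preservation can also be read off the blocks.
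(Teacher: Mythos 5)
Your argument is correct, and its main route is essentially the paper's: the paper proves nothing here beyond pointing to the same two theorems of Ozawa, so your verification of unitality, trace-preservation, and the $\A$-bimodule identity $\Pi(AXB)=A\,\Pi(X)\,B$ simply supplies the details needed to see that $\Pi$ is the trace-preserving conditional expectation those theorems speak about. The bimodule computation via $\langle Y, AX - A\,\Pi(X)\rangle = \langle A^\dagger Y, X - \Pi(X)\rangle = 0$ is exactly right, and your alternative Wedderburn-block argument (pinching, partial trace, tensoring with a normalized identity) is a valid fully self-contained substitute for the citation that the paper does not offer.
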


\bibliographystyle{plainurl}
\bibliography{MaxEnt}

\end{document}